\numberwithin{equation}{section}
\newcommand{\la}{\lambda}
\newcommand{\al}{\alpha}
\newcommand{\be}{\beta}
\newcommand{\ga}{\gamma}
\newcommand{\Ga}{\Gamma}
\newcommand{\R}{\mathbb{R}}
\newcommand{\T}{\mathbb{T}}
\newcommand{\ccc}{\cdot\cdot\cdot}
\newcommand{\n}[1]{\Vert #1\Vert}
\newcommand{\bn}[1]{\big \Vert #1 \big \Vert}
\newcommand{\bbn}[1]{\Big\Vert #1 \Big \Vert}
\newcommand{\lr}[1]{\left\{ #1\right\}}
\newcommand{\lrc}[1]{\left[ #1\right]}
\newcommand{\lrs}[1]{\left( #1\right)}
\newcommand{\lra}[1]{\langle #1\rangle}
\newcommand{\bbabs}[1]{\Big | #1 \Big|}
\newcommand{\wt}[1]{\widetilde{#1}}
\newcommand{\wq}{\infty}
\newcommand{\pa}{\partial}
\newcommand{\ol}{\overline}
\begin{document}

\newtheorem{theorem}{Theorem}[section]
\newtheorem{lemma}[theorem]{Lemma}

\theoremstyle{definition}
\newtheorem{definition}[theorem]{Definition}
\newtheorem{example}[theorem]{Example}
\newtheorem{remark}[theorem]{Remark}

\numberwithin{equation}{section}

\newtheorem{proposition}[theorem]{Proposition}
\newtheorem{corollary}[theorem]{Corollary}
\newtheorem{goal}[theorem]{Goal}
\newtheorem{algorithm}{Algorithm}

\renewcommand{\figurename}{Fig.}

\title[Compressible Euler From Quantum N-Body]{The derivation of the compressible Euler equation from quantum many-body dynamics}
\author[X. Chen]{Xuwen Chen}
\address{Department of Mathematics, University of Rochester, Rochester, NY 14627, USA}

\email{chenxuwen@math.umd.edu}

\author[S. Shen]{Shunlin Shen}
\address{School of Mathematical Sciences, Peking University, Beijing, 100871, China}
\email{1701110015@pku.edu.cn; slshen100871@gmail.com}

\author[J. Wu]{Jiahao Wu}
\address{School of Mathematical Sciences, Peking University, Beijing, 100871, China}
\email{2101110023@stu.pku.edu.cn}

\author[Z. Zhang]{Zhifei Zhang}
\address{School of Mathematical Sciences, Peking University, Beijing, 100871, China}

\email{zfzhang@math.pku.edu.cn}

\subjclass[2010]{Primary 35Q31, 76N10, 81V70; Secondary 35Q55, 81Q05.}

\date{}

\dedicatory{}

\begin{abstract}
We study the three dimensional many-particle quantum dynamics in mean-field setting. We forge together the hierarchy method and the modulated energy method. We prove rigorously that the compressible Euler equation is the limit as the particle number tends to infinity and the Planck's constant tends to zero. We establish strong and quantitative microscopic to macroscopic convergence of mass and momentum densities up to the 1st blow up time of the limiting Euler equation. We justify that the macroscopic pressure emerges from the space-time averages of microscopic interactions, which are in fact, Strichartz-type bounds. We have hence found a physical meaning for Strichartz type bounds which were first raised
by Klainerman and Machedon in this context.
 \end{abstract}
\keywords{Compressible Euler Equation, BBGKY Hierarchy, Quantum Many-body Dynamics, Klainerman-Machedon Bounds, Modulated Energy.}
\maketitle
\tableofcontents

\section{Introduction}
The analysis of the nonlinear fluid equations like the Euler equations and the Navier-Stokes equations, is an important (if not vital) part of many areas of pure and applied mathematics, science, and engineering. On the one hand, their validity has certainly been checked countless times against the experiments. On the other hand, the rigorous derivation of these macroscopic continuum equations from basic microscopic Newtonian / Maxwell / quantum particle models has largely remained open. It is certainly of fundamental interest in mathematics to establish such derivations and prove that macroscopic quantities like pressure and viscosity emerge from the averaging of microscopic quantities. In this paper, we prove the derivation of the compressible Euler equation from the quantum $N$-body dynamic in the mean-field setting. We choose to start from the quantum theory as it is, at the moment, the most accurate microscopic model and such a derivation would also establish (again) that there is no obvious gap between the basic models in quantum and classical scales.

In the setting of classical mechanics, a strategy of the derivation of fluid equations from particle systems is to 1st pass to a mesoscopic Boltzmann equation, then derive the desired fluid equation from the Boltzmann equation. (See, for example, the standard monographs \cite{CIP94,GSRT13,SR09} and references within.) However, such a route may not suit our purpose here. On the one hand, the validity of the classical Boltzmann equations is only justified up to a sufficiently small time and is not clear if it covers the 1st blow up time of the Euler equation. On the other hand, the derivation of the quantum Boltzmann equation is at a rudimentary stage. (See, for example, \cite{CH21on,CG15,ESY04} and the references within.) Not to mention the possibility that one might need to pass to another classical Boltzmann equation if one takes such a route. Moreover, we would like to understand the fine interplay between $\hbar$ and $N$, the two fundamental constants, which differ by $10^{57}$ in SI units. In fact, starting from 2019, the mass unit is defined via the Planck's constant. Thus, we choose to derive the compressible Euler equation directly from quantum many-body dynamics.

We consider Bosons in this paper as it is more directly related to the Newton-Maxwell particles due to the assumption that particles are indistinguishable. (Fermions are also interesting, see for example, the survey \cite{NY02}.) We consider the 3D linear $N$-body bosonic Schr\"{o}dinger equation:
\begin{align}\label{equ:N-body schroedinger equation}
i\hbar\pa_{t}\psi_{N,\hbar}=H_{N,\hbar}\psi_{N,\hbar}
\end{align}
with Hamiltonian $H_{N,\hbar}$ given by
\begin{align}\label{equ:hamiltonian}
H_{N,\hbar}=\sum_{j=1}^{N}-\frac{1}{2}\hbar^{2}\Delta_{x_{j}}+\frac{1}{N}\sum_{1\leq j<k\leq N}V_{N}(x_{j}-x_{k})
\end{align}
where
\begin{align}
V_{N}(x)=N^{3\be}V(N^{\be}x),
\end{align}
and the factor $1/N$ is to make sure the interactions grow like $N$ instead $N^{2}$, a mean-field like scaling.
The marginal densities $\ga_{N,\hbar}^{(k)}$ associated with $\psi_{N,\hbar}$ in kernel form are given by
\begin{align}
\ga_{N,\hbar}^{(k)}(t,\textbf{x}_{k},\textbf{x}_{k}')=\int \psi_{N,\hbar}(t,\textbf{x}_{k},\textbf{x}_{N-k})\ol{\psi_{N,\hbar}}(t,\textbf{x}_{k}',\textbf{x}_{N-k})\textbf{dx}_{N-k}
\end{align}
where $\textbf{x}_{k}=(x_{1},...,x_{k})\in \R^{3k}$ and $\textbf{x}_{N-k}=(x_{k+1},...,x_{N})\in \R^{3(N-k)}$.
Notably, one can derive cubic nonlinear Schr\"{o}dinger equation (NLS) as the $N\to \infty$ limit of $(\ref{equ:N-body schroedinger equation})$ with $\hbar$ fixed, then the well-known Madelung transform \cite{Mad27} relates Schr\"{o}dinger type equation and the macroscopic Euler equations in a formal limit process as $\hbar$ tends to zero. That is, the macroscopic equations could formally emerge from $(\ref{equ:N-body schroedinger equation})$ as an iterated limit: $\lim_{\hbar \to 0}\lim_{N\to\infty}$. Such an iterated limit is far from satisfactory in
either mathematics or physics. Not only an iterated limit could lose information in any one limit, it kills the fine interplay between $\hbar$ and $N$ and hence cannot show the $(N,\hbar)$ threshold at which classical behavior starts to dominate. Therefore, for a more complete and deeper understanding, we deal with the $(N,\hbar)$ double limit which is also a more challenging problem.

Our limiting macroscopic equation is the 3D compressible Euler equation, which is,
\begin{equation}\label{equ:euler equation}
\begin{cases}
& \partial _{t}\rho +\nabla \cdot \left( \rho u\right) =0, \\
& \partial _{t}u+(u\cdot \nabla )u+b_{0}\nabla \rho =0,\\
& (\rho,u)|_{t=0}=(\rho^{in},u^{in}),
\end{cases}
\end{equation}
if written in velocity form, or
\begin{equation}\label{equ:euler equation,momentum}
\begin{cases}
& \partial _{t}\rho +\operatorname{div} J =0,\\
& \partial _{t}J+\operatorname{div}\lrs{\frac{J\otimes J}{\rho}}+\frac{1}{2}\nabla \lrs{b_{0}\rho^{2}} =0,\\
& (\rho,J)|_{t=0}=(\rho^{in},J^{in}).
\end{cases}
\end{equation}
if written in momentum form. Here, as usual, $\rho(t,x):\R\times \R^{3}\mapsto \R$
is the mass density, $u(t,x)=(u^{1}(t,x),u^{2}(t,x),u^{3}(t,x)):\R\times \R^{3}\mapsto \R^{3}$
denotes the velocity of the fluid,  $J(t,x)=\lrs{\rho u}(t,x):\R\times \R^{3}\mapsto \R^{3}$ denotes the momentum of the fluid as the coupling constant\footnote{The equations $(\ref{equ:euler equation})$ and $(\ref{equ:euler equation,momentum})$ are not hyperbolic if the microscopic potential $V$ is focusing or $b_{0}<0$.} $b_{0}=\int V$ which is the macroscopic effect of the microscopic interaction $V$ and hints that pressure $b_{0}\rho^{2}$ should originate from the microscopic interaction between particles.

\subsection{Statement of the Main Theorem}
\begin{theorem}\label{thm:main theorem,bbgky-to-euler}
 Let $d=3$, $\be<\frac{2}{5}$, the marginal densities $\Ga_{N,\hbar}=\lr{\ga_{N,\hbar}^{(k)}}$ associated with $\psi_{N,\hbar}$ be the solution to the $N$-body dynamics with a Schwarz even pair interaction $V\geq 0$. The N-body initial data satisfies the following condition:

$(a)$ $\psi_{N,\hbar}(0)$ is normalized, that is, $\n{\psi_{N,\hbar}(0)}_{L^{2}}=1$.

$(b)$ The N-body energy bounds hold:
\begin{align} \label{equ:n-body energy bound,initial data condition}
\lra{\psi_{N,\hbar}(0),(H_{N,\hbar}/N+1)^{k}\psi_{N,\hbar}(0)}\leq (E_{0,\hbar})^{k}
\end{align}
for $k\leq (\ln N)^{100}$.

$(c)$ $\Ga_{N,\hbar}(0)$ is asymptotically factorized in the sense that
\begin{align} \label{equ:initial condition, factorized}
&\bbn{\prod_{j=1}^{k}\lra{\hbar\nabla_{x_{j}}}\lra{\hbar\nabla_{x_{j}'}}\lrc{\ga_{N,\hbar}^{(k)}(0)-|\phi_{N,\hbar}^{in}\rangle \langle \phi_{N,\hbar}^{in}|^{\otimes k}}}_{L_{x,x'}^{2}}\leq (E_{0,\hbar})^{k}N^{\frac{5}{2}\be-1}
\end{align}
for $k\leq (\ln N)^{100}$, where $\phi_{N,\hbar}^{in}$ is normalized that $\n{\phi_{N,\hbar}^{in}}_{L^{2}}=1$ and has finite energy\footnote{It is expected that $E_{0}\leq E_{0,\hbar}$ due to the correction structure.}, that is
\begin{align}
&\frac{1}{2}\n{\phi_{N,\hbar}^{in}}_{L^{2}}^{2}+\frac{1}{2}\n{\hbar\nabla \phi_{N,\hbar}^{in}}_{L^{2}}^{2}+\frac{1}{2}\lra{V_{N}*|\phi_{N,\hbar}^{in}|^{2},|\phi_{N,\hbar}^{in}|^{2}}\leq E_{0}.\label{equ:energy bound,one-body}
\end{align}

$(d)$ The initial datum $(\rho^{in},u^{in})$ to $(\ref{equ:euler equation})$ satisfy
 \begin{align}
 \rho^{in}\geq 0,\quad \int \rho^{in}(x)dx=1,
 \end{align}
 and is such that the Euler system $(\ref{equ:euler equation})$ has a solution $(\rho,u)$ satisfying
\begin{align}\label{equ:euler equation, regularity condition}
\begin{cases}
&(\rho,u)\in C([0,T_{0}];H^{s})\cap C^{1}([0,T_{0}];H^{s-1}),\\
&\rho\geq 0,\quad \int_{\R^{d}} \rho(t,x)dx=1,
\end{cases}
\end{align}
where $s>\frac{d}{2}+3$. The modulated / renormalized energy at initial time tends to zero:
\begin{align}\label{equ:modulated energy, initial convergence rate}
\int_{\R^{d}}|(i\hbar\nabla-u^{in})\phi_{N,\hbar}^{in}|^{2}dx+b_{0}\int_{\R^{d}}\lrs{|\phi_{N,\hbar}^{in}|^{2}-\rho^{in}}^{2}dx
\leq C\hbar^{2}.
\end{align}

Then under the restriction that\footnote{The composite function $e^{(n)}(x):=e^{(e^{(n-1)}(x))}$ and $C_{V}$ is a constant which only depends on some Sobolev norms of $V$ as needed in the proof.}
\begin{align}\label{equ:restriction,N,h}
N\geq e^{(2)}\lrs{\lrc{C_{V}^{2}E_{0,\hbar}^{2}T_{0}/\hbar^{7}}^{2}},
\end{align}
for $N\geq N_{0}(\be)$ and $(\rho,u)$ satisfying $(\ref{equ:euler equation})$, we have the quantitative estimates on the convergence of the mass density
\begin{align}\label{equ:the convergence of the mass denstiy}
& \n{\ga_{N,\hbar}^{(1)}(t,x;x)-\rho(t,x)}_{L_{t}^{\infty}[0,T_{0}]L_{x}^{2}(\R^{d})}\leq C(T_{0})\lrs{\frac{1}{\ln N}+\hbar},
\end{align}
on the convergence of the momentum density for $r\in (1,4/3)$
\begin{align}\label{equ;the convergence of the momentum density}
& \bbn{\operatorname{Im}\lrs{\hbar\nabla_{x_{1}} \ga_{N,\hbar}^{(1)}}(t,x;x)-(\rho u)(t,x)}_{L_{t}^{\infty}[0,T_{0}]L_{x}^{r}(\R^{d})}\leq C(T_{0})\lrs{\frac{1}{(\ln N)^{5(1-\frac{1}{r})}}+\hbar^{\frac{4-3r}{r}}},
\end{align}
and on the emergence of pressure
\begin{align}\label{equ:convergence, uniform in h,bbgky-euler,pressure}
&\bbn{\int V_{N}(x-x_{2})\ga_{N,\hbar}^{(2)}(t,x,x_{2};x,x_{2})dx_{2}-b_{0}\rho(t,x)^{2}}_{L_{t}^{1}[0,T_{0}]L_{x}^{1}(B_{R})}
\leq  C(T_{0})\lrs{\frac{R^{d/2}}{\ln N}+\hbar}.
\end{align}
where coupling constant is $b_{0}=\int V$.
\end{theorem}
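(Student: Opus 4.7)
The plan is to interpose a one-body Hartree-type object $\phi_{N,\hbar}$ (solving $i\hbar\pa_{t}\phi_{N,\hbar}=-\tfrac{1}{2}\hbar^{2}\Delta\phi_{N,\hbar}+(V_{N}\ast|\phi_{N,\hbar}|^{2})\phi_{N,\hbar}$ with initial datum $\phi_{N,\hbar}^{in}$) between the $N$-body quantum dynamics and the Euler system, and to combine two a priori disjoint techniques. On one side, a BBGKY hierarchy argument compares $\ga_{N,\hbar}^{(k)}$ with the factorized state $|\phi_{N,\hbar}\rangle\langle\phi_{N,\hbar}|^{\otimes k}$ through Duhamel expansion and Strichartz-type bounds of Klainerman--Machedon type. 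On the other side, a modulated energy argument along the Hartree flow compares the Madelung decomposition of $\phi_{N,\hbar}$ with the Euler solution $(\rho,u)$. The two errors are then summed via the triangle inequality to produce $(\ref{equ:the convergence of the mass denstiy})$--$(\ref{equ:convergence, uniform in h,bbgky-euler,pressure})$.

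For the hierarchy step, I would write the difference of the BBGKY hierarchy for $\Ga_{N,\hbar}$ and the coupled Hartree hierarchy as an iterated Duhamel series, apply a board-game type collapsing estimate to tame the combinatorics, and close via $\hbar$-weighted Strichartz bounds on $\prod_{j}\lra{\hbar\nabla_{x_{j}}}\lra{\hbar\nabla_{x_{j}'}}\ga_{N,\hbar}^{(k)}$. Propagation of the energy bound $(\ref{equ:n-body energy bound,initial data condition})$ together with asymptotic factorization $(\ref{equ:initial condition, factorized})$, an iteration depth of order $(\ln N)^{100}$, and the scaling restriction $\be<2/5$ that absorbs the concentration of $V_{N}$, should yield a trace-norm comparison of order $(\ln N)^{-1}$ uniformly in $\hbar$. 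The need to beat the factorial and semiclassical constants at this depth is precisely the arithmetic origin of the double-exponential threshold $(\ref{equ:restriction,N,h})$.

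For the modulated energy step, along the Hartree flow I would introduce
\begin{equation*}
\scm_{N,\hbar}(t)=\frac{1}{2}\int_{\R^{3}}|(i\hbar\nabla-u(t,x))\phi_{N,\hbar}(t,x)|^{2}\,\rd x+\frac{b_{0}}{2}\int_{\R^{3}}\lrs{|\phi_{N,\hbar}(t,x)|^{2}-\rho(t,x)}^{2}\,\rd x,
\end{equation*}
differentiate in time, and organize the resulting terms using $(\ref{equ:euler equation})$ and the Hartree equation into a quadratic remainder controlled by $\scm_{N,\hbar}$ itself, plus commutator and mean-field replacement errors of size $O(\hbar^{2})$ reflecting both the semiclassical expansion and the replacement of $V_{N}\ast|\phi_{N,\hbar}|^{2}$ by $b_{0}|\phi_{N,\hbar}|^{2}$. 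A Gr\"onwall argument on $[0,T_{0}]$, started from $(\ref{equ:modulated energy, initial convergence rate})$, yields $\scm_{N,\hbar}(t)\lesssim \hbar^{2}$; the density convergence in $L^{2}$ is then read off the second term of $\scm_{N,\hbar}$, while the momentum convergence in $L_{x}^{r}$ for $r\in(1,4/3)$ follows by interpolation from the first term together with uniform energy bounds on $\phi_{N,\hbar}$.

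I expect the main obstacle to lie in the pressure statement $(\ref{equ:convergence, uniform in h,bbgky-euler,pressure})$, since $\int V_{N}(x-x_{2})\ga_{N,\hbar}^{(2)}(t,x,x_{2};x,x_{2})\,\rd x_{2}$ is itself a Klainerman--Machedon-type space-time averaged interaction, and the theorem asks us to identify it in the limit with the macroscopic pressure $b_{0}\rho^{2}$. The approach would be to first use the hierarchy comparison to replace $\ga_{N,\hbar}^{(2)}$ by $|\phi_{N,\hbar}|^{2}\otimes|\phi_{N,\hbar}|^{2}$ at the cost of a $(\ln N)^{-1}$ error, and then invoke the modulated-energy $L^{2}$ control of $|\phi_{N,\hbar}|^{2}-\rho$ to pass to $b_{0}\rho^{2}$ (whence the $R^{d/2}$ factor from localizing by Cauchy--Schwarz on $B_{R}$). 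The truly delicate point is that both comparisons must be carried with constants that stay uniform in $\hbar$ on $[0,T_{0}]$, so that the hierarchy error and the modulated-energy error do not amplify each other; this uniform coupling of $N$, $\hbar$, and $\be$ is exactly what elevates the result beyond an iterated limit and is encoded in $(\ref{equ:restriction,N,h})$.
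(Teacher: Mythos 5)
Your overall architecture matches the paper's: interpose the H-NLS $\phi_{N,\hbar}$, compare $\Ga_{N,\hbar}$ with $|\phi_{N,\hbar}\rangle\langle\phi_{N,\hbar}|^{\otimes k}$ via a KM-flavored hierarchy argument, compare $\phi_{N,\hbar}$ with $(\rho,u)$ via a modulated-energy Gr\"onwall argument, and add the errors. The variant modulated energy you write down (with $\frac{b_{0}}{2}\int(|\phi_{N,\hbar}|^{2}-\rho)^{2}$ in place of the paper's $\frac{1}{2}\lra{V_{N}*\rho_{N,\hbar},\rho_{N,\hbar}}+\frac{b_0}{2}\int\rho^2-b_0\int\rho\rho_{N,\hbar}$) is one of the acceptable variants the paper alludes to: it differs from theirs by an approximation-of-identity error of size $O(\hbar^{-4}N^{-\be})$, which is harmless under $(\ref{equ:restriction,N,h})$, but it does not cancel cleanly against the conserved Hartree energy, so you would have to carry that error explicitly.

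The genuine gap is in the hierarchy step, and it concerns the \emph{time} variable, not the iteration depth. Each Duhamel iterate of the BBGKY/H-NLS difference is controlled by the $\hbar$-scaled KM collapsing estimate, which brings in a factor $\sim C_{V}\hbar^{-\al}T^{1/2}$ per level (Lemma \ref{lemma:collapsing estimate d=3, propogator version}). To sum the Duhamel series you therefore need $T\lesssim\hbar^{2\al}$ \emph{regardless of how deep you iterate}: taking $l_c\sim(\ln N)^{100}$ rather than $l_c\sim\ln N$ shrinks the tail but does not relax the time constraint, so a pure depth iteration gives a bound only on a window that vanishes as $\hbar\to 0$, which is useless for a semiclassical statement on $[0,T_0]$. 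The paper's actual mechanism is the diagram of Section \ref{section:BBGKY hierarchy v.s. H-NLS: Long-time Uniform in h Estimates}: (i) establish a crude KM space-time bound for $w_{N,\hbar}^{(k)}$ on one short slab by cutting off the expansion at $l_c\sim\ln N$ and using the global a priori $H^1$ bound to close the tail; (ii) feed that KM bound back into the $H^1$ estimate for $w_{N,\hbar}^{(k)}$ to obtain a \emph{decaying-in-$N$} comparison on the slab; (iii) treat the endpoint of that slab as fresh ``initial data'' and restart on the next slab of length $\sim\hbar^{2\al}$, accepting a degradation of the power of $N$ at each step (the rate goes from $N^{\frac{5}{2}\be-1}$ to $N^{(\frac{5}{2}\be-1)/(2^{m}m!)}$ after $m$ steps, Proposition \ref{proposition:decay estimate, uniform in h, bbgky-hartree}). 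Covering $[0,T_{0}]$ requires $n(T_{0},\hbar)\sim C_V^2E_{0,\hbar}^2T_{0}/\hbar^{2\al}$ slabs, and demanding that the surviving $N$-power $N^{(\frac{5}{2}\be-1)/(2^{n}n!)}$ still beats $1/\ln N$ is what forces the double-exponential threshold $(\ref{equ:restriction,N,h})$ --- not ``beating factorial constants at depth $(\ln N)^{100}$'' as you write. Without this time-stepping bootstrap, your hierarchy comparison does not reach $T_{0}$ uniformly in $\hbar$, and the triangle inequality with the modulated-energy bound cannot be executed.
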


Theorem $\ref{thm:main theorem,bbgky-to-euler}$ is the first of its type and involves the up-to-date techniques in the hierarchy method as well as well-developed modulated energy approach
and we can in fact see it from its assumptions.
The $N$-body energy condition in $(b)$ is inspired by purely factorized or statistically independent datum, and has been used since the 1st wave of work \cite{AGT07,EESY06,
ESY06,ESY07,ESY09,ESY10} on deriving NLS using hierarchy methods. It is usually cashed in as the $H^{1}$ bound on the marginals\footnote{We include a proof as Proposition $\ref{lemma:energy estimate}$ for completeness.}
\begin{align}\label{equ:n-body energy bound}
\bbn{\prod_{j=1}^{k}\lra{\hbar\nabla_{x_{j}}}\lra{\hbar\nabla_{x_{j}'}}\ga_{N,\hbar}^{(k)}(t)}_{L_{x,x'}^{2}}\leq \lrs{2E_{0,\hbar}}^{k}
\end{align}
for $k\leq (\ln N)^{100}$, $N\geq N_{0}(\be)$ which is independent of $k$ and $\hbar$, and all $t\in(-\infty,+\infty)$. Here, we allow the $k\geq 2$ energy bound $E_{0,\hbar}$ to depend on $\hbar$ (the $k=1$ case can be the same $E_{0}$ as in $(\ref{equ:energy bound,one-body})$) as long as it is finite for every nonzero $\hbar$, so that a larger variety of initial datum are included at the cost of the restriction $(\ref{equ:restriction,N,h})$ with an unspecific factor $E_{0,h}$. This is a natural requirement as the $k>2$ energy includes higher derivatives which do not play well with $\hbar$.
Though the initially asymptotic statistically independent assumption $(\ref{equ:initial condition, factorized})$ in $(c)$ is like usual in this line of work, the optimal decay rate is believed (and proved in some cases, see for example, \cite{BBCS19,BS19}) to be $1/N$ for every given $\hbar$. We assume $N^{\frac{5}{2}\be-1}$ here so that the paper is self-contained as we will prove this rate at the first step of bootstrapping argument. Indeed, for $\hbar=1$, the convergence rate has been achieved in \cite{CH21quantitative}. On the other hand, compared to the $N$-body energy bounds $(\ref{equ:n-body energy bound,initial data condition})$, the energy bound $E_{0}$ for $\phi_{N,h}^{in}$ is independent of $\hbar$ to be compatible with the modulated energy bound.

As for the assumptions regarding the initial datum of $(\ref{equ:euler equation})$, the local well-posedness of compressible Euler equations has been studied by many authors, for example, see the monograph \cite{Maj84}. But we remark that, there are many variants / choices / constructions of the modulated energy $(\ref{equ:modulated energy, initial convergence rate})$ which look seemingly different but are intuitively and closely related up to an error term as the initial quantities like $|\phi_{N,\hbar}^{in}|^2$ and $\rho^{in}$ are supposed to be close. In fact, the full modulated energy which we will use and is going to be controlled by $(\ref{equ:modulated energy, initial convergence rate})$ takes the form
\begin{align}\label{equ:modulated energy,outline}
\mathcal{M}\lrc{\phi_{N,\hbar},\rho,u}(t)=&\frac{1}{2}\int_{\R^{d}}|(i\hbar\nabla-u)\phi_{N,\hbar}(t)|^{2}dx\\
&+\frac{1}{2}\lra{V_{N}*|\phi_{N,\hbar}|^{2},
|\phi_{N,\hbar}|^{2}}+\frac{b_{0}}{2}\int_{\R^{d}}\rho^{2}dx-b_{0}\int_{\R^{d}}\rho|\phi_{N,\hbar}|^{2}dx.\notag
\end{align}
We assume the convergence rate $(\ref{equ:modulated energy, initial convergence rate})$ to be $\hbar^{2}$ which should also be optimal, since the smallness factor in the modulated kinetic part is at most $\hbar^{2}$. Besides, the $\hbar^{2}$ rate can be achieved with WKB type initial datum.

Theorem \ref{thm:main theorem,bbgky-to-euler} rigorously establishes the derivation of the
 macroscopic equation $(\ref{equ:euler equation})$ in classical mechanics from the quantum many-body systems as a regional double limit and provides convergent rate estimates in the strong norm sense. It also justifies the emergence of the macroscopic pressure from the space-time averages of microscopic interactions, which are in fact, Strichartz-type bounds. Notice that, the microscopic quantity converging to the pressure $\rho^{2}$ is basically $\gamma_{N,\hbar}^{(2)} (x,x,x,x)$. It is not necessarily finite or defined a.e. if we are below $H^{9/8}$ in 3D by the Sobloev embeddings, and we only have $H^{1}$ here. The Strichartz bound, 1st raised by Klanerman-Machedon (KM) \cite{KM08} in this context, makes this quantity well-defined and have unexpectedly verified the theory that pressure is the space-time averaging of the microscopic interactions under the physical $H^{1}$ assumption\footnote{Such an averaging effect certainly cannot be observed if one assumes higher than $H^{9/8}$ regularity at the $N$-body level, but we remark that it cannot be observed either if one passes through the NLS in the $H^1$ setting as $|\phi|^4$ is already defined a.e. without any need to appeal to Strichartz.}. We have hence found the 1st physical meaning for Strichartz type bounds since its original discovery in \cite{Str77}. Such a discovery is part of the main novelty of this paper. On the other hand, the limit in Theorem $\ref{thm:main theorem,bbgky-to-euler}$ is taken within the region $(\ref{equ:restriction,N,h})$ which proves the dominance of classical behaviors when $N>>\hbar$. Such a requirement is physical as they indeed differ by $10^{57}$ in reality but we believe $(\ref{equ:restriction,N,h})$ is not optimal and searching for the sharp threshold (may not exist, some mesoscopic behaviors might happen) between classical and quantum behaviors is certainly of interest. However, it would not be surprising to have totally independent $N$ and $\hbar$ in weak / weak* limits as the classical-now-elementary Riemann-Lebesgue lemma shows that a weak convergent sequence can be uniformly bounded away from its weak limit. To work with the 3D $N$-body equation smoothly in the physical $H^{1}$ energy space, we improvise and extend the up-to-date hierarchy method in KM format.

The hierarchy method in general was 1st suggested by M. Kacs and proved to be successful in Lanford's work \cite{Lan75} regarding the Boltzmann equation. The hierarchy method we use in the paper is actually more originated from the 1st wave of work \cite{AGT07,ESY07,ESY09,ESY10} by Adami-Golse-Teta and Erd\"{o}s-Schlein-Yau on deriving NLS from quantum many-body dynamics around 2005 as suggested by Spohn \cite{Spo80}.
At that time, the main difficulty lies in the uniqueness of the infinite Gross-Pitaevskii (GP)
hierarchy. With a sophisticated Feynman graph analysis in the fundamental papers \cite{ESY07,ESY09,ESY10} which derived the 3D cubic defocusing NLS, Erd{\"o}s, Schlein, and Yau proved the $H^{1}$-type unconditional uniqueness of the $\R^{3}$ cubic GP hierarchy. The first series of ground breaking papers have motivated a large amount of work.

Subsequently in 2007, by imposing an additional a-prior condition on space-time norm, Klainerman and Machedon \cite{KM08}, inspired by \cite{ESY07,KM93}, gave another uniqueness criterion of the GP hierarchy in a different space of density matrices defined by Strichartz type norms. They provided a different combinatorial argument, the now so-called Klainerman-Machedon board game, to combine the inhomogeneous terms effectively reducing their numbers and then derived a space-time estimate to control these terms.
At that time, it was open on how to prove that the limits coming from the $N$-body dynamics satisfy the now so called KM space-time bound required for uniqueness. Nonetheless, \cite{KM08} has made the delicate analysis of the GP hierarchy approachable from the perspective of PDE. Klainerman and Machedon also did not know the KM bound required for uniqueness, which is an usual product of Strichartz type well-posedness theory, actually has a physical meaning.\footnote{Private communication with M. Machedon.}

Later, Kirkpatrick, Schlein, and Staffilani \cite{KSS11} obtained the KM space-time bound via a simple trace theorem in both $\R^{2}$ and $\T^{2}$ and derived the 2D cubic defocusing NLS from the 2D quantum many-body dynamic. Such a scheme also motivated many works \cite{CP11,Che12,CH16focusing,CH17,GSS14,
HS16,Soh16,SS15,Xie15} for the uniqueness of GP hierarchies and enables the hierarchy method on the derivation 1D or 2D NLS directly from 3D \cite{CH13,CH17,She21}, which is quite different but has some similar flavor with our Theorem $\ref{thm:main theorem,bbgky-to-euler}$ here. However, how to verify the KM bound in the 3D cubic case remained fully open at that time.

 Then in 2011, T. Chen and Pavlovi{\'c} proved that the 3D cubic
KM space-time bound held for the defocusing $\be<1/4$ case in \cite{CP14}.
The result was quickly improved to $\be<2/7$ by X. Chen in \cite{Che13} and then extended to the almost optimal case, $\be<1$, by X. Chen and Holmer in \cite{CH16correlation,CH16on}, by lifting the $X_{1,b}$ space techniques from NLS theory into the field. Away from being the first work to prove the 3D KM bound,
the work \cite{CP14} hinted two unforeseen directions of the hierarchy method: one direction is to prove new NLS results via the more complicated hierarchies, while the other is that it is possible to derive NLS without a compactness or uniqueness argument as in the 1st wave of papers.

In 2013, by introducing the quantum de Finetti theorem from \cite{LNR14} to the field, T. Chen, Hainzl, Pavlovi$\acute{c}$
and Seiringer \cite{CHPS15} provided a simplified proof of the $L_{t}^{\wq}H_{x}^{1}$-type 3D cubic uniqueness theorem
as stated in \cite{ESY07}. This method motivated many work \cite{CS14,HTX15,HTX16,Soh15} and has climbed to a climax recently as the previously open $\T^{d}$ energy-critical and supercritical NLS unconditional uniqueness problems progressed in \cite{HS19} were completely and unifiedly resolved via the analysis of the supposedly more complicated GP hierarchy in \cite{CH19,CH20,CSZ21} which used, the $l^2$ decoupling theorem \cite{BD15} and has helped in the derivation of the energy-critical NLS \cite{CH19,CH20}. With these new exciting developments, it seems that KM bound method is obsolete though the KM board game stays useful. Such an impression or conclusion is apparently wrong.

 Recently, on the basis of \cite{CP14,Che13,CH16correlation,CH16on}, X. Chen and Holmer in \cite{CH21quantitative} reformatted the hierarchy method with KM space-time estimates and proved a bi-scattering theorem for the NLS to obtain almost optimal local in time convergence rate estimates under $H^{1}$ regularity. They integrate the idea from the Fock space approach (see, for example, \cite{BdOS15,BCS17,BS19,GM13,GM17}  and references within\footnote{The Fock space approach is also a vast and deep subject right now. There are certainly more references available. But this paper is not directly related to that.}), that, using H-NLS as an intermediate dynamic, into the hierarchy method. Most notably, the work \cite{CH21quantitative}, though it did not use the KM bound, sheds light on our principal part in which we prove strong, quantitative, uniform in $\hbar$, estimates regarding the BBGKY hierarchy and the H-NLS hierarchy.

On the other hand, the behavior of the wave function of cubic defocusing NLS as the
Planck's constant goes to zero is studied by many authors using various approaches. In \cite{Gre98}, Grenier derived compressible Euler equations for small time from cubic NLS by WKB.
Jin, Levermore and
McLaughlin in \cite{JLM99} established the semiclassical limit of the 1D defocusing cubic NLS for all time by using the complete integrability. In \cite{LZ06}, F. Lin and P. Zhang investigated Gross-Pitaevskii equation (a cubic Schr\"{o}dinger equation nonzero at infinity) in 2D exterior domains by adopting the modulated energy method. For a more detailed survey related to semiclassical limits of NLS, see \cite{Car08,Zha08} and references within.

As seen from above, it is highly nontrivial to derive Euler equations from NLS, let alone from quantum $N$-body dynamics. As the first breakthrough, Golse and Paul \cite{GP21}, with the help of Serfaty's inequality \cite[Corollary 3.4]{Ser20}, used the modulated energy method in the quantum $N$-body setting to justify the validity of the joint mean-field and classical limit of the quantum $N$-body dynamics leading to the pressureless Euler-Poisson with repulsive Coulomb potential. Subsequently, Rosenzweig complemented \cite{GP21} in \cite{Ros21} by combining mean-field, semiclassical and quasi-neutral limits to reach a derivation of an incompressible Euler equation on $\T^{d}$ with binary Coulomb interactions.

Though both singular, the $\delta$-interaction, which results in a compressible Euler equation, is substantially different from the Coulomb potential and calls for new ideas. The strong convergence and quantitative estimates are much more demanding as well. Our proof combines improvision and extension of up-to-date techniques in the hierarchy method and the well-developed modulated energy method.

\subsection{Outline of the Proof}
 $(\ref{equ:N-body schroedinger equation})$ is very different from our goal $(\ref{equ:euler equation})$ or $(\ref{equ:euler equation,momentum})$, at least by the look of them. Key quantities of $\ga_{N,\hbar}^{(k)}$ in
 $(\ref{equ:the convergence of the mass denstiy})-(\ref{equ:convergence, uniform in h,bbgky-euler,pressure})$ are all traces and thus as usual, are regularity thirsty and does not react well as $\hbar\to 0$, while solutions to $(\ref{equ:euler equation})$ will blow up in finite time. Thus we insert H-NLS $(\ref{equ:N-hartree equation})$\footnote{We expect more NLS like behaviors from $(\ref{equ:N-hartree equation})$ due to the context and hence we call it H-NLS.} as an intermediate dynamic.
We hence divide the proof of Theorem \ref{thm:main theorem,bbgky-to-euler} into two parts in Sections \ref{section:BBGKY hierarchy v.s. H-NLS: Long-time Uniform in h Estimates} and  \ref{section:H-NLS v.s. the Compressible Euler Equation: a Modulated Energy Approach} respectively. The first part is the quantitative estimate between the BBGKY hierarchy and the H-NLS using an improvised and extended version of cutting edge hierarchy methods, while the second part is comparing the H-NLS equation with the compressible Euler equation $(\ref{equ:euler equation})$ by means of modulated energy approach. Here, we are using the BBGKY hierarchy directly satisfied by $\ga_{N,\hbar}^{(k)}$. We are not using any Wigner transforms in this paper. Theorem \ref{thm:main theorem,bbgky-to-euler} then follows from summing the concluding estimates in Sections \ref{section:BBGKY hierarchy v.s. H-NLS: Long-time Uniform in h Estimates} and  \ref{section:H-NLS v.s. the Compressible Euler Equation: a Modulated Energy Approach}.

There are two main difficulties in Section \ref{section:BBGKY hierarchy v.s. H-NLS: Long-time Uniform in h Estimates}. One is to make sure all the differences estimates are uniform in $\hbar$. The other one is to make sure the estimates hold for every finite time despite that the method \cite{CH21quantitative} only works local in time. How to circumvent these two difficulties is also the main technical novelty of this paper. The key is to implement
the Klainerman-Machedon space-time bound, which was thought of only as a part of uniqueness, to strengthen our local in time quantitative estimate. The whole process is still very technical, we illustrate the principle logic of the proof of Section \ref{section:BBGKY hierarchy v.s. H-NLS: Long-time Uniform in h Estimates} by the following diagram.
\begin{center}
\begin{tikzpicture}
\node[draw,rectangle,inner sep=0.1cm,outer sep=0.1cm] (1) at (0,0) {Global $H^{1}$ bound on
the difference $w_{N,\hbar}^{(k)}$};
\node[draw,rectangle,inner sep=0.1cm,outer sep=0.1cm] (2) at (0,-1.5) {KM bound on $w_{N,\hbar}^{(k)}$};
\node[draw,rectangle,inner sep=0.1cm,outer sep=0.1cm] (3) at (0,-3) {Summable, decay in $N$, $H^{1}$ estimate on $w_{N,\hbar}^{(k)}$};
\node[draw,rectangle,inner sep=0.1cm,outer sep=0.1cm] (4) at (0,-4.5) {Summable,
decay in $N$, KM bound on $w_{N,\hbar}^{(k)}$};
\node[draw,rectangle,inner sep=0.1cm,outer sep=0.1cm] (5) at (0,-6) {Convergence rate for every finite time};
\node[right] (6) at (0,-2.25) {Feedback};
\node[right] (7) at (0,-3.75) {Feedback};
\node[left] (8) at (6,-5.25) {Sum up (iteration argument)};
\draw[->] (1)--(2);
\draw[->] (2)--(3);
\draw[->] (3)--(4);
\draw[-] (3)--(5,-3)--(5,-4.5)--(4);
\draw[->] (5,-3.75)--(6,-3.75)--(6,-6)--(5);
\end{tikzpicture}
\end{center}

The logic above looks quite like proving global well-posedness for a $H^{1}$ subcritical NLS. However, this is the 1st time such a diagram is carried out for the hierarchy analysis. The technical reason is exactly as mention before (and in almost all paper in this field), though the $N$-body equations and hierarchies are linear, we are dealing with traces instead of powers.

In Section \ref{section:A Tool Box of Space-time Estimates}, we first provide some preliminary or crude estimates for the difference between BBGKY hierarchy and H-NLS hierarchy. We then prove in Section \ref{section:A Klainerman-Machedon Bound 1st} that $w_{N,\hbar}^{(k)}$ satisfies the Klainerman-Machedon bound by gathering information from the $(\ln N)^{10}$ coupling level.  Subsequently in Section \ref{section:Feeding the Strichartz Bound into the $H^1$ Estimate}, we feed the KM bound / a Strichartz bound back, to strengthen the $H^{1}$ estimate
for $k<(\ln N)^2$ to obtain summable and decay in $N$ estimates. We can further feed the $H^{1}$ estimate of $w_{N,\hbar}^{(k)}$ back into the KM bound proof and deduce that the KM bound actually decays in $N$. Notice the difference between the given $k$-th marginal and the selectable coupling level. For a given $k$-th marginal, how to select a suitable coupling level to yield desired information is a fine technical point. Section \ref{section:A Klainerman-Machedon Bound 1st} to \ref{section:Feeding the Strichartz Bound into the $H^1$ Estimate} addresses this issue.
Finally, in Section \ref{section:Convergence Rate for Every Finite Time}, with the conclusion in Section \ref{section:Feeding the Strichartz Bound into the $H^1$ Estimate}, we can sacrifice some decays in $N$ to bootstrap the quantitative estimates to every finite time by a clever but elementary manipulation.

As the $N$-body estimates have been set ready in Section \ref{section:BBGKY hierarchy v.s. H-NLS: Long-time Uniform in h Estimates},
in Section \ref{section:H-NLS v.s. the Compressible Euler Equation: a Modulated Energy Approach}, we adopt modulated energy method to compare directly the H-NLS equation with compressible Euler equations before the blowup time. The idea of proving convergence is via a Gronwall argument on modulated
energies assuming and using the regularity of the limiting solution.
 Therefore, in Section \ref{section:The Evolution of the Modulated Energy}, we compute the evolution of modulated energy. Subsequently in Section \ref{section:Modulated Energy Estimate}, we control the error term originating from the evolution of modulated energy to obtain a Gronwall type estimate. Due to the work in Section \ref{section:BBGKY hierarchy v.s. H-NLS: Long-time Uniform in h Estimates}, we are able to have a close match inside the modulated energy, and hence the error term is very tractable.

The main novelty of the paper is Theorem \ref{thm:main theorem,bbgky-to-euler} which establishes a strong microscopic to macroscopic derivation up to the 1st blow up time of the limiting Euler equation from the fundamental quantum $N$-body dynamics. The proof also combines the hierarchy method and the modulated energy method for the 1st time. We indeed anticipated more fusion of these two methods in the future. During the course of proof, we have implemented the Klainerman-Machedon Strichartz type bound and hence verified the emergence of pressure as the space-time averagings of microscopic interaction. This argument thus discovers a physical meaning for Strichartz type bounds for PDE and harmonic analysis.

\section{BBGKY Hierarchy v.s. H-NLS: Long-time Uniform in $\hbar$ Estimates}\label{section:BBGKY hierarchy v.s. H-NLS: Long-time Uniform in h Estimates}
The main goal in this section is to establish long-time uniform in $\hbar$ estimate for the difference $\ga_{N,\hbar}^{(k)}-|\phi_{N,\hbar}\rangle \langle \phi_{N,\hbar}|^{\otimes k}$ where
$\phi_{N,\hbar}$ is the solution to H-NLS equation as below
\begin{align}\label{equ:N-hartree equation}
\begin{cases}
&i\hbar\pa_{t}\phi_{N,\hbar}=-\frac{1}{2}\hbar^{2}\Delta\phi_{N,\hbar}+\lrs{V_{N}*|\phi_{N,\hbar}|^{2}}\phi_{N,\hbar},\\
&\phi_{N,\hbar}(0)=\phi_{N,\hbar}^{in}.
\end{cases}
\end{align}
Our strategy is to use the hierarchy approach.
It is well-known that $\Ga_{N,\hbar}(t)=\lr{\ga_{N,\hbar}^{(k)}}$ satisfies the Bogoliubov-Born-Green-Kirkwood-Yvon (BBGKY) hierarchy
\begin{align}\label{equ:bbgky hierarchy,differential form}
i\hbar\pa_{t}\ga_{N,\hbar}^{(k)}=&\sum_{j=1}^{k}\lrc{-\frac{\hbar^{2}}{2}\Delta_{x_{j}},\ga_{N,\hbar}^{(k)}}+
\frac{1}{N}\sum_{1\leq i<j\leq k}\lrc{V_{N}(x_{i}-x_{j}),\ga_{N,\hbar}^{(k)}}\\
&+\frac{N-k}{N}\sum_{j=1}^{k}\operatorname{Tr}_{k+1}\lrc{V_{N}(x_{j}-x_{k+1}),\ga_{N,\hbar}^{(k+1)}}.\notag
\end{align}
In addition to $(\ref{equ:bbgky hierarchy,differential form})$, we will use the so-called H-NLS hierarchy which takes the form
\begin{align}\label{equ:h-nls hierarchy,differential form}
i\hbar\pa_{t}\ga_{H,\hbar}^{(k)}=&\sum_{j=1}^{k}\lrc{-\frac{\hbar^{2}}{2}\Delta_{x_{j}},\ga_{H,\hbar}^{(k)}}
+\sum_{j=1}^{k}\operatorname{Tr}_{k+1}\lrc{V_{N}(x_{j}-x_{k+1}),\ga_{H,\hbar}^{(k+1)}},
\end{align}
generated by
$$\lr{\ga_{H,\hbar}^{(k)}(t,\textbf{x}_{k};\textbf{x}_{k}')=|\phi_{N,\hbar}\rangle \langle \phi_{N,\hbar}|^{\otimes k}},$$
the tensor products\footnote{As it is indeed a tensor product, the energy bound $(\ref{equ:n-body energy bound})$ also holds for $\ga_{H,\hbar}^{(k)}$ with $E_{0,\hbar}$ replaced by $E_{0}$.} of solutions to H-NLS equation $(\ref{equ:N-hartree equation})$.

Denote the difference between the BBGKY hierarchy and the H-NLS hierarchy by
\begin{align}
w_{N,\hbar}^{(k)}=\ga_{N,\hbar}^{(k)}-\ga_{H,\hbar}^{(k)}.
\end{align}
For convenience, we first set up some notations.
Define
\begin{align}\label{equ:kinetic operator,h}
S_{\hbar}^{(1,k)}=\prod_{j=1}^{k}\lra{\hbar\nabla_{x_{j}}}\lra{\hbar\nabla_{x_{j}'}},
\end{align}
the collision operator
\begin{align}
B_{N,j,k+1}f^{(k+1)}=&B_{N,j,k+1}^{+}f^{(k+1)}-B_{N,j,k+1}^{-}f^{(k+1)}\\
=&\int V_{N}(x_{j}-x_{k+1})f^{(k+1)}(\textbf{x}_{k},x_{k+1};\textbf{x}_{k}',x_{k+1})dx_{k+1}\notag\\
&-\int V_{N}(x_{j}'-x_{k+1})f^{(k+1)}(\textbf{x}_{k},x_{k+1};\textbf{x}_{k}',x_{k+1})dx_{k+1},\notag
\end{align}
and
\begin{align}
B_{N,\hbar,j,k+1}=\frac{1}{\hbar}B_{N,j,k+1},\quad B_{N,\hbar,j,k+1}^{\pm}=\frac{1}{\hbar}B_{N,j,k+1}^{\pm}.
\end{align}
Define the quantum mass density and momentum density in the quantum $N$-body setting
\begin{align}
\ga_{N,\hbar}^{(1)}(t,x;x),\quad J_{N,h}^{(1)}(t,x;x)=\operatorname{Im}\lrs{\hbar\nabla_{x_{1}} \ga_{N,\hbar}^{(1)}}(t,x;x)
\end{align}
and
\begin{align}\label{equ:quantum mass density,momentum density,one-body}
\rho_{N,\hbar}(t,x)=|\phi_{N,\hbar}(t,x)|^{2},\quad J_{N,\hbar}(t,x)=\hbar\operatorname{Im}\lrs{\ol{\phi_{N,\hbar}}(t,x)\nabla_{x}\phi_{N,\hbar}(t,x)}
\end{align}
with respect to H-NLS equation.

Our main theorem of this section is the following.
\begin{theorem}\label{theorem:bbgky-to-n-hartree}
Let $\phi_{N,\hbar}(t)$ be the solution to H-NLS equation with the initial data $\phi_{N,h}^{in}$.
Under the same conditions $(a)$, $(b)$ and $(c)$ of Theorem $\ref{thm:main theorem,bbgky-to-euler}$ and the restriction that
\begin{align}\label{equ:restriction,N,h,bbgky-hnls}
N\geq e^{(2)}\lrs{\lrc{C_{V}^{2}E_{0,\hbar}^{2}T_{0}/\hbar^{7}}^{2}},
\end{align}
then for $N\geq N_{0}(\be)$ we have the quantitative estimates
\begin{align}
&\sup_{t\in [0,T_{0}]}\bn{S_{\hbar}^{(1,1)}w_{N,\hbar}^{(1)}(t)}_{L_{x,x'}^{2}}\leq \lrs{\frac{1}{\ln N}}^{100},\label{equ:decay estimate, uniform in h,bbgky-hartree}\\
&\int_{[0,T_{0}]}\bn{S_{\hbar}^{(1,1)}B_{N,\hbar,1,2}^{\pm}w_{N,\hbar}^{(2)}(t)}_{L_{x,x'}^{2}}dt\leq \lrs{\frac{1}{\ln N}}^{100},\label{equ:decay estimate, uniform in h,bbgky-hartree,collapsing part}
\end{align}
which implies that
\begin{align}
&\n{\ga_{N,\hbar}^{(1)}(t,x;x)-\rho_{N,\hbar}(t,x)}_{L_{t}^{\infty}[0,T_{0}]L_{x}^{2}(\R^{d})}\leq \frac{C}{\ln N},\label{equ:convergence, uniform in h,bbgky-hartree,density}\\
&\n{J_{N,\hbar}^{(1)}(t,x;x)-J_{N,\hbar}(t,x)}_{L_{t}^{\infty}[0,T_{0}]L_{x}^{r}(\R^{d})}\leq \frac{C}{(\ln N)^{5\min\lr{1-\frac{1}{r},\frac{3}{r}-2}}},\label{equ:convergence, uniform in h,bbgky-hartree,moment}\\
&\bbn{\lrs{B_{N,1,2}^{\pm}\ga_{N,\hbar}^{(2)}}\lrs{t,x;x}-\lrs{\rho_{N,\hbar}
V_{N}*\rho_{N,\hbar}}(t,x)}_{L_{t}^{1}[0,T_{0}]L_{x}^{1}(B_{R})}
\leq C \frac{R^{d/2}+T_{0}}{\ln N}\label{equ:convergence, uniform in h,bbgky-hartree,pressure},
\end{align}
where $r\in (1,\frac{3}{2})$. Here $\pm$ does not matter as $\lrs{B_{N,1,2}^{+}\ga_{N,\hbar}^{(2)}}(t,x;x)=\lrs{B_{N,1,2}^{-}\ga_{N,\hbar}^{(2)}}(t,x;x)$.

\end{theorem}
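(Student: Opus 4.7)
The plan is to work throughout with the difference marginal $w_{N,\hbar}^{(k)}=\ga_{N,\hbar}^{(k)}-\ga_{H,\hbar}^{(k)}$. Subtracting $(\ref{equ:h-nls hierarchy,differential form})$ from $(\ref{equ:bbgky hierarchy,differential form})$, $w_{N,\hbar}^{(k)}$ satisfies an inhomogeneous hierarchy whose collision term at level $k+1$ is $\frac{N-k}{N}\sum_{j=1}^{k}\operatorname{Tr}_{k+1}[V_{N}(x_{j}-x_{k+1}),w_{N,\hbar}^{(k+1)}]$ and whose source terms carry explicit $k/N$ or $1/N$ smallness coming from the diagonal interaction and the $\frac{N-k}{N}$--vs.--$1$ mismatch. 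The goal is then to bound $S_{\hbar}^{(1,1)}w_{N,\hbar}^{(1)}$ in $L_{t}^{\infty}L_{x,x'}^{2}$ and the collapsed quantity $S_{\hbar}^{(1,1)}B_{N,\hbar,1,2}^{\pm}w_{N,\hbar}^{(2)}$ in $L_{t}^{1}L_{x,x'}^{2}$, uniformly in $\hbar$ and on the full interval $[0,T_{0}]$, with the announced $(\ln N)^{-100}$ rate; all macroscopic convergence statements $(\ref{equ:convergence, uniform in h,bbgky-hartree,density})$--$(\ref{equ:convergence, uniform in h,bbgky-hartree,pressure})$ will then fall out of these two abstract bounds.

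First I would assemble the necessary toolbox in Section \ref{section:A Tool Box of Space-time Estimates}: Duhamel's formula for $w_{N,\hbar}^{(k)}$ along the free $\hbar$-propagator compatible with $S_{\hbar}^{(1,k)}$, the $\hbar$-scaled collapsing/Strichartz estimate for the collision operators $B_{N,\hbar,j,k+1}^{\pm}$ in the range $\be<\tfrac{2}{5}$, and a crude global $H^{1}$ bound on both $\ga_{N,\hbar}^{(k)}$ and $\ga_{H,\hbar}^{(k)}$ obtained from the energy bound $(\ref{equ:n-body energy bound})$ (Proposition \ref{lemma:energy estimate}) and the conservation of H-NLS energy. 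The key step is then, in Section \ref{section:A Klainerman-Machedon Bound 1st}, to prove a Klainerman--Machedon bound for $w_{N,\hbar}^{(k)}$: iterate Duhamel in the $w$-hierarchy up to coupling level $k_{0}\sim (\ln N)^{10}$, apply the KM board game to collapse the $(k_{0}-k)!$ resulting terms into at most $C^{k_{0}-k}$ equivalence classes, control each term by the collapsing estimate, and absorb the boundary contribution at level $k_{0}$ through $(\ref{equ:n-body energy bound})$ and the asymptotic factorization $(\ref{equ:initial condition, factorized})$. The accumulated $1/N$ factor from $k_{0}$ applications of the mean-field scaling must beat the combinatorial growth $C^{k_{0}}E_{0,\hbar}^{k_{0}}$; balancing the two forces the double-exponential condition $(\ref{equ:restriction,N,h,bbgky-hnls})$.

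Once the KM bound is in hand, in Section \ref{section:Feeding the Strichartz Bound into the $H^1$ Estimate} I would feed it into a refined $H^{1}$ estimate on $w_{N,\hbar}^{(k)}$ for $k\leq (\ln N)^{2}$: one collision factor in the Duhamel formula is handled by the $L_{t}^{1}L_{x,x'}^{2}$ KM bound, and the remainder by the global uniform $H^{1}$ control, producing an $H^{1}$ estimate on $w_{N,\hbar}^{(k)}$ which is summable in $k$ and decays in $N$. Plugging this refined $H^{1}$ estimate back into the KM derivation upgrades the KM bound itself to one decaying in $N$. The estimates at this stage still live on a short time window coming from the Chen--Holmer local-in-time scheme, so in Section \ref{section:Convergence Rate for Every Finite Time} I would sacrifice a fraction of the $N$-decay at each step and iterate the local estimate $\sim\ln N$ times to cover $[0,T_{0}]$, producing the final $(\ln N)^{-100}$ rates $(\ref{equ:decay estimate, uniform in h,bbgky-hartree})$--$(\ref{equ:decay estimate, uniform in h,bbgky-hartree,collapsing part})$; the restriction $(\ref{equ:restriction,N,h,bbgky-hnls})$ is precisely what guarantees enough initial smallness to survive $\sim\ln N$ Gr\"onwall-type amplifications.

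The hardest part will be the first KM bound at coupling level $(\ln N)^{10}$, since the 3D cubic collision operator is borderline in the physical $H^{1}$ space and every constant must be $\hbar$-independent. This requires scrupulously tracking $S_{\hbar}^{(1,k)}$ through the Strichartz machinery and replacing the usual unit-frequency decompositions by $\hbar$-adapted ones, so that no spurious negative power of $\hbar$ sneaks in from $B_{N,\hbar,j,k+1}^{\pm}$. With $(\ref{equ:decay estimate, uniform in h,bbgky-hartree})$ and $(\ref{equ:decay estimate, uniform in h,bbgky-hartree,collapsing part})$ secured, the consequences $(\ref{equ:convergence, uniform in h,bbgky-hartree,density})$--$(\ref{equ:convergence, uniform in h,bbgky-hartree,pressure})$ follow by taking diagonal traces, interpolating between the trace $L^{1}$ control and the Sobolev $L^{2}$ control of $\hbar\nabla_{x_{1}}w_{N,\hbar}^{(1)}$ to land in $L^{r}$ for $r\in(1,\tfrac{3}{2})$, and writing the microscopic interaction as $B_{N,1,2}w_{N,\hbar}^{(2)}$ plus the tensor-product contribution $(\rho_{N,\hbar}\,V_{N}*\rho_{N,\hbar})$, integrated against the ball $B_{R}$.
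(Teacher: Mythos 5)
Your overall strategy for obtaining the two abstract bounds $(\ref{equ:decay estimate, uniform in h,bbgky-hartree})$ and $(\ref{equ:decay estimate, uniform in h,bbgky-hartree,collapsing part})$ matches the paper's flowchart: crude tool-box estimates, KM bound at coupling level $(\ln N)^{10}$, feeding it back into a summable $H^1$ estimate for $k\lesssim (\ln N)^2$, and an in-time iteration that trades decay in $N$ against time length. One quantitative remark: the paper's iteration count is $n(T_0,\hbar)\sim T_0/\hbar^{2\alpha}$, which the restriction $(\ref{equ:restriction,N,h,bbgky-hnls})$ keeps below $\sqrt{C\ln\ln N}$, not $\sim\ln N$; the loss per step is the factorial-type degradation $N^{(\frac{5}{2}\beta-1)/(2^m m!)}$, and $\ln N$ iterations would wipe out the decay entirely, so the much smaller iteration count is essential, not incidental.

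The genuine gap is in your one-sentence treatment of the consequences $(\ref{equ:convergence, uniform in h,bbgky-hartree,density})$--$(\ref{equ:convergence, uniform in h,bbgky-hartree,pressure})$, which is exactly where the paper's own proof of this theorem does its work (the abstract bounds are deferred to Proposition \ref{proposition:decay estimate, uniform in h, bbgky-hartree}). You write ``taking diagonal traces, interpolating between the trace $L^1$ control and the Sobolev $L^2$ control,'' but these two ingredients cannot be combined by Lebesgue interpolation alone. The kernel bound $\|S_\hbar^{(1,1)} w_{N,\hbar}^{(1)}\|_{L^2_{x,x'}}\le(\ln N)^{-100}$ is an $H^1$-type control in $6$ dimensions, which does not see the codimension-$3$ diagonal; and the trace $L^1$ bound coming from Cauchy--Schwarz on $\psi_{N,\hbar}$ is $O(E_{0,\hbar}^{1/2})$, carrying no $N$-decay. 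What makes these two compatible in the paper is a Littlewood--Paley cutoff $P_{\le M}^{1'}+P_{>M}^{1'}$ in the primed variable: Bernstein converts the low-frequency part of the kernel $L^2$ bound into a diagonal-trace bound at cost $M^{d/2}$, while the high-frequency tail is estimated back at the level of $\psi_{N,\hbar}$ and $\phi_{N,\hbar}$ using the $N$-body and one-body energy bounds, gaining a negative power of $M$. Only then does one choose $M$ as a power of $\ln N$ and invoke $(\ref{equ:restriction,N,h,bbgky-hnls})$ to swallow the $\hbar$-dependent prefactors. Without this frequency decomposition the claimed interpolation does not close, for any of the three estimates; the pressure estimate has the additional subtlety that the $L^1_t$ time-averaging from $(\ref{equ:decay estimate, uniform in h,bbgky-hartree,collapsing part})$ and the Sobolev-type bound $V_N(x_1-x_2)\lesssim (1-\Delta_{x_1})^\eta(1-\Delta_{x_2})^\eta$ with $\eta>d/4$ are both needed on the high-frequency piece.
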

\begin{proof}[\textbf{Proof of Theorem $\ref{theorem:bbgky-to-n-hartree}$}]
We prove $(\ref{equ:decay estimate, uniform in h,bbgky-hartree})$ and $(\ref{equ:decay estimate, uniform in h,bbgky-hartree,collapsing part})$ in Proposition \ref{proposition:decay estimate, uniform in h, bbgky-hartree}. Here, we prove $(\ref{equ:convergence, uniform in h,bbgky-hartree,density})-(\ref{equ:convergence, uniform in h,bbgky-hartree,pressure})$ using $(\ref{equ:decay estimate, uniform in h,bbgky-hartree})$
and $(\ref{equ:decay estimate, uniform in h,bbgky-hartree,collapsing part})$.
For the mass density estimate $(\ref{equ:convergence, uniform in h,bbgky-hartree,density})$,
we split
\begin{align}
w_{N,\hbar}^{(1)}=&\lrs{P_{\leq M}^{1'}+P_{>M}^{1'}}w_{N,\hbar}^{(1)},
\end{align}
where $P_{\leq M}$ denotes the Littlewood-Paley projection with $M$ to be determined.

For the low frequency part, by Bernstein inequality and estimate $(\ref{equ:decay estimate, uniform in h,bbgky-hartree})$, we have
\begin{align*}
\bbn{\lrs{P_{\leq M}^{1'}w_{N,\hbar}^{(1)}}(t,x;x)}_{L_{x}^{2}}
\leq& \bbn{\lrs{P_{\leq M}^{1'}w_{N,\hbar}^{(1)}}(t,x;x')}_{L_{x}^{2}L_{x'}^{\infty}}
\lesssim  M^{\frac{d}{2}}\bn{w_{N,\hbar}^{(1)}}_{L_{x_{1},x_{1}'}^{2}}
\lesssim  \frac{M^{\frac{d}{2}}}{(\ln N)^{100}}.
\end{align*}

For the high frequency part, by triangle inequality we have
\begin{align*}
\bbn{\lrs{P_{>M}^{1'}w_{N,\hbar}^{(1)}}(t,x;x)}_{L_{x}^{2}}\leq
\bbn{\lrs{P_{>M}^{1'}\ga_{N,\hbar}^{(1)}}(t,x;x)}_{L_{x}^{2}}
+\bbn{\lrs{P_{>M}^{1'}\ga_{H,\hbar}^{(1)}}(t,x;x)}_{L_{x}^{2}}.
\end{align*}
It suffices to deal with $\ga_{N,\hbar}^{(1)}$ as we can estimate $\ga_{H,\hbar}^{(1)}$ in the same way.
We use interpolation between $L^{1}$ and $L^{3}$
\begin{align}
\bbn{\lrs{P_{>M}^{1'}\ga_{N,\hbar}^{(1)}}(t,x;x)}_{L_{x}^{2}}\leq \bbn{\lrs{P_{>M}^{1'}\ga_{N,\hbar}^{(1)}}(t,x;x)}_{L_{x}^{1}}^{\frac{1}{4}}
\bbn{\lrs{P_{>M}^{1'}\ga_{N,\hbar}^{(1)}}(t,x;x)}_{L_{x}^{3}}^{\frac{3}{4}}.
\end{align}
For the $L_{x}^{1}$ norm, we have, by definition of $\ga_{N,\hbar}^{(1)}$ that
\begin{align}
\bbn{\lrs{P_{>M}^{1'}\ga_{N,\hbar}^{(1)}}(t,x;x)}_{L_{x}^{1}}
=&\int_{\R^{d}}\bbabs{\int \psi_{N,\hbar}(t,x,\textbf{x}_{2,N})\ol{P_{>M}^{1}\psi_{N,\hbar}}
(t,x,\textbf{x}_{2,N})d\textbf{x}_{2,N}}dx\notag
\end{align}
where we have used $\textbf{x}_{2,N}=(x_{2},...,x_{N})$ for short.
By Cauchy-Schwarz and Bernstein,
\begin{align*}
\leq&\n{\psi_{N,\hbar}}_{L^{2}}\n{P_{>M}^{1}\psi_{N,\hbar}}_{L^{2}}\\
\leq&\n{\psi_{N,\hbar}}_{L^{2}} \frac{1}{\hbar M}\n{\lra{\hbar\nabla_{x_{1}}}\psi_{N,\hbar}}_{L^{2}}
\end{align*}
By the $N$-body energy bound $(\ref{equ:n-body energy bound})$, we reach
\begin{align}\label{equ:littlewood,high frequency,L1 norm}
\bbn{\lrs{P_{>M}^{1'}\ga_{N,\hbar}^{(1)}}(t,x;x)}_{L_{x}^{1}}\lesssim \frac{E_{0,\hbar}^{1/2}}{\hbar M}.
\end{align}
Similarly, for the $L_{x}^{3}$ norm, we have
\begin{align}\label{equ:littlewood,high frequency,L3 norm}
&\bbn{\lrs{P_{>M}^{1'}\ga_{N,\hbar}^{(1)}}(t,x;x)}_{L_{x}^{3}}\\
=&\lrc{\int_{\R^{d}}\bbabs{\int \psi_{N,\hbar}(t,x,\textbf{x}_{2,N})\ol{P_{>M}^{1}\psi_{N,\hbar}}
(t,x,\textbf{x}_{2,N})d\textbf{x}_{2,N}}^{3}dx}^{\frac{1}{3}}\notag
\end{align}
By H\"{o}lder, Minkowski, Sobolev, and the $N$-body energy bound $(\ref{equ:n-body energy bound})$, we get
\begin{align*}
\leq& \n{\psi_{N,\hbar}}_{L_{\textbf{x}_{2,N}}^{2}L_{x_{1}}^{6}}\n{P_{>M}^{1}\psi_{N,\hbar}}_{L_{\textbf{x}_{2,N}}^{2}L_{x_{1}}^{6}}\\
\lesssim& \n{\lra{\nabla_{x_{1}}}\psi_{N,\hbar}}_{L^{2}}\n{\lra{\nabla_{x_{1}}}P_{>M}^{1}\psi_{N,\hbar}}_{L^{2}}
\lesssim \frac{E_{0,\hbar}}{\hbar^{2}}.
\end{align*}
Combining $(\ref{equ:littlewood,high frequency,L1 norm})$ and $(\ref{equ:littlewood,high frequency,L3 norm})$, we obtain
\begin{align}
\bbn{\lrs{P_{>M}^{1'}\ga_{N,\hbar}^{(1)}}(t,x;x)}_{L_{x}^{2}}\lesssim \frac{1}{M^{1/4}}\lrs{\frac{E_{0,\hbar}}{\hbar^{2}}}^{\frac{7}{8}}.
\end{align}
By taking $M=(\ln N)^{10}$ and adopting the restriction $(\ref{equ:restriction,N,h,bbgky-hnls})$, we obtain $(\ref{equ:convergence, uniform in h,bbgky-hartree,density})$.

For the momentum estimate $(\ref{equ:convergence, uniform in h,bbgky-hartree,moment})$, we set
\begin{align}
g_{N,\hbar}(t,x_{1};x_{1}')=\hbar\nabla_{x_{1}} \ga_{N,\hbar}^{(1)}(t,x_{1};x_{1}')-
\hbar\nabla_{x_{1}}\ga_{H,\hbar}^{(1)}(t,x_{1};x_{1}').
\end{align}
We split
\begin{align}
g_{N,\hbar}=\lrs{P_{\leq M}^{1'}+P_{>M}^{1'}}g_{N,\hbar}
\end{align}
with $M$ to be determined.
By Interpolation,
\begin{align}
&\n{(P_{\leq M}^{1'}g_{N,\hbar})(t,x;x)}_{L_{x}^{r}}\leq\n{(P_{\leq M}^{1'}g_{N,\hbar})(t,x;x)}_{L_{x}^{1}}^{\frac{2}{r}-1}\n{(P_{\leq M}^{1'}g_{N,\hbar})(t,x;x)}_{L_{x}^{2}}^{2-\frac{2}{r}}\leq I\cdot  II,\label{equ:convergence,uniform in h,bbgky-hartree,moment,low frequency}\\
&\n{(P_{>M}^{1'}g_{N,\hbar})(t,x;x)}_{L_{x}^{r}}\leq \n{(P_{>M}^{1'}g_{N,\hbar})(t,x;x)}_{L_{x}^{1}}^{\frac{3}{r}-2}
\n{(P_{>M}^{1'}g_{N,\hbar})(t,x;x)}_{L_{x}^{3/2}}^{3-\frac{3}{r}}\leq III\cdot IV.
\label{equ:convergence,uniform in h,bbgky-hartree,moment,high frequency}
\end{align}
Next, we separately estimate the above terms on the right hand side of $(\ref{equ:convergence,uniform in h,bbgky-hartree,moment,low frequency})$ and $(\ref{equ:convergence,uniform in h,bbgky-hartree,moment,high frequency})$.

For $I$, by triangle inequality we have
\begin{align*}
\n{(P_{\leq M}^{1'}g_{N,\hbar})(t,x;x)}_{L_{x}^{1}}\leq
\n{(P_{\leq M}^{1'}\hbar\nabla_{x_{1}}\ga_{N,\hbar}^{(1)})(t,x;x)}_{L_{x}^{1}}+
\n{(P_{\leq M}^{1'}\hbar\nabla_{x_{1}}\ga_{H,\hbar}^{(1)})(t,x;x)}_{L_{x}^{1}}.
\end{align*}
By Cauchy-Schwarz and the $N$-body energy bound $(\ref{equ:n-body energy bound})$, we have
\begin{align}
&\n{(P_{\leq M}^{1'}\hbar\nabla_{x_{1}}\ga_{N,\hbar}^{(1)})(t,x;x)}_{L_{x}^{1}}\leq  \n{\hbar\nabla \psi_{N,\hbar}}_{L^{2}}\n{P_{\leq M}^{1}\psi_{N,\hbar}}_{L^{2}}\leq E_{0,\hbar}^{1/2}.
\label{equ:convergence,uniform in h,bbgky-hartree,moment,low frequency,L1 norm}
\end{align}
Similarly, by Cauchy-Schwarz and the energy bound for $\phi_{N,\hbar}$, we have
\begin{align}\label{equ:convergence,uniform in h,bbgky-hartree,moment,low frequency,L1 norm,product}
&\n{(P_{\leq M}^{1'}\hbar\nabla_{x_{1}}\ga_{H,\hbar}^{(1)})(t,x;x)}_{L_{x}^{1}}\\
=&\n{(\hbar\nabla_{x_{1}}\phi_{N,\hbar})(t,x) \ol{P_{\leq M}\phi_{N,\hbar}}(t,x)}_{L_{x}^{1}}
\leq \n{\hbar\nabla_{x_{1}}\phi_{N,\hbar}}_{L_{x}^{2}}\n{P_{\leq M}\phi_{N,\hbar}}_{L^{2}}\leq E_{0}^{1/2}.\notag
\end{align}
With $E_{0}\leq E_{0,\hbar}$, we combine $(\ref{equ:convergence,uniform in h,bbgky-hartree,moment,low frequency,L1 norm})$ and $(\ref{equ:convergence,uniform in h,bbgky-hartree,moment,low frequency,L1 norm,product})$ to obtain
\begin{align}\label{equ:convergence,uniform in h,bbgky-hartree,moment,I}
I=\n{(P_{\leq M}^{1'}g_{N,\hbar})(t,x;x)}_{L_{x}^{1}}^{\frac{2}{r}-1}\lesssim E_{0,\hbar}^{\frac{1}{r}-\frac{1}{2}}.
\end{align}

For $II$, we use Bernstein inequality and estimate $(\ref{equ:decay estimate, uniform in h,bbgky-hartree})$ to get
\begin{align}\label{equ:convergence,uniform in h,bbgky-hartree,moment,low frequency,L2 norm}
\n{(P_{\leq M}^{1'}g_{N,\hbar})(t,x;x)}_{L_{x}^{2}}\leq &\n{(P_{\leq M}^{1'}g_{N,\hbar})(t,x;x')}_{L_{x}^{2}L_{x'}^{\wq}}
\lesssim M^{\frac{d}{2}}\n{g_{N,\hbar}(t,x;x')}_{L_{x}^{2}L_{x'}^{2}}
\lesssim \frac{M^{\frac{d}{2}}}{(\ln N)^{100}},
\end{align}
and hence
\begin{align}\label{equ:convergence,uniform in h,bbgky-hartree,moment,II}
II\lesssim \lrs{\frac{M^{\frac{d}{2}}}{(\ln N)^{100}}}^{2-\frac{2}{r}}.
\end{align}

For $III$, by triangle inequality we have
\begin{align}
\n{(P_{>M}^{1'}g_{N,\hbar})(t,x;x)}_{L_{x}^{1}}\leq \bbn{\lrs{P_{>M}^{1'}\hbar\nabla_{x_{1}}\ga_{N,\hbar}^{(1)}}(t,x;x)}_{L_{x}^{1}}+
\bbn{\lrs{P_{>M}^{1'}\hbar\nabla_{x_{1}}\ga_{H,\hbar}^{(1)}}(t,x;x)}_{L_{x}^{1}}.
\end{align}
We use Cauchy-Schwarz, Bernstein, and the $N$-body energy bound $(\ref{equ:n-body energy bound})$ to obtain
\begin{align}\label{equ:convergence,uniform in h,bbgky-hartree,moment,high frequency,L1 norm}
&\bbn{\lrs{P_{>M}^{1'}\hbar\nabla_{x_{1}}\ga_{N,\hbar}^{(1)}}(t,x;x)}_{L_{x}^{1}}\\
=&\int \bbabs{\int \hbar\nabla_{x_{1}} \psi_{N,\hbar}(t,x,x_{2,N})\ol{P_{>M}^{1'}\psi_{N,\hbar}}(t,x,x_{2,N})dx_{2,N}}dx\notag\\
\leq& \n{\hbar\nabla_{x_{1}}\psi_{N,\hbar}}_{L^{2}}\n{P_{>M}^{1}\psi_{N,\hbar}}_{L^{2}}\notag\\
\leq& M^{-1}\n{\hbar\nabla_{x_{1}}\psi_{N,\hbar}}_{L^{2}}\n{\lra{\nabla_{x_{1}}}P_{>M}^{1}\psi_{N,\hbar}}_{L^{2}}\notag\\
\lesssim &\frac{E_{0,\hbar}}{\hbar M}\notag
\end{align}
In the same method, we use the energy bound for $\phi_{N,\hbar}$ to get
\begin{align}\label{equ:convergence,uniform in h,bbgky-hartree,moment,high frequency,L1 norm,hartree}
\bbn{\lrs{P_{>M}^{1'}\hbar\nabla_{x_{1}}\ga_{H,\hbar}^{(1)}}(t,x;x)}_{L_{x}^{1}}\lesssim
\frac{E_{0}}{\hbar M}.
\end{align}
Combining $(\ref{equ:convergence,uniform in h,bbgky-hartree,moment,high frequency,L1 norm})$ with $(\ref{equ:convergence,uniform in h,bbgky-hartree,moment,high frequency,L1 norm,hartree})$, we have
\begin{align}\label{equ:convergence,uniform in h,bbgky-hartree,moment,III}
III\lesssim \lrs{\frac{E_{0,\hbar}}{\hbar M}}^{\frac{3}{r}-2}.
\end{align}

For $IV$, we use H\"{o}lder, Minkowski, Sobolev, and the $N$-body energy bound $(\ref{equ:n-body energy bound})$ to obtain
\begin{align}\label{equ:convergence,uniform in h,bbgky-hartree,moment,low frequency,L3/2 norm}
&\bbn{\lrs{P_{>M}^{1'}\hbar\nabla_{x_{1}}\ga_{N,\hbar}^{(1)}}(t,x;x)}_{L_{x}^{3/2}}\\
=&\lrc{\int \bbabs{\int \hbar\nabla_{x_{1}} \psi_{N,\hbar}(t,x,x_{2,N})\ol{P_{>M}^{1'}\psi_{N,\hbar}}(t,x,x_{2,N})dx_{2,N}}^{\frac{3}{2}}dx}^{\frac{2}{3}}\notag\\
\leq& \n{\hbar\nabla_{x_{1}}\psi_{N,\hbar}}_{L_{x_{1}}^{2}L^{2}_{\textbf{x}_{2,N}}}
\n{P_{>M}\psi_{N,\hbar}}_{L_{x_{1}}^{6}L^{2}_{\textbf{x}_{2,N}}}\notag\\
\leq& \n{\hbar\nabla_{x_{1}}\psi_{N,\hbar}}_{L_{x_{1}}^{2}L^{2}_{\textbf{x}_{2,N}}}
\n{P_{>M}\psi_{N,\hbar}}_{L^{2}_{\textbf{x}_{2,N}}L_{x_{1}}^{6}}\notag\\
\leq& \n{\hbar\nabla_{x_{1}}\psi_{N,\hbar}}_{L_{x_{1}}^{2}L^{2}_{\textbf{x}_{2,N}}}
\n{\lra{\nabla_{x_{1}}}\psi_{N,\hbar}}_{L^{2}_{\textbf{x}_{2,N}}L_{x_{1}}^{2}}\notag\\
\lesssim &\frac{E_{0,\hbar}}{\hbar}\notag
\end{align}
In the same method, we use the energy bound for $\phi_{N,\hbar}$ to get
\begin{align}\label{equ:convergence,uniform in h,bbgky-hartree,moment,low frequency,L3/2 norm,hartree}
\bbn{\lrs{P_{>M}^{1'}\hbar\nabla_{x_{1}}\ga_{H,\hbar}^{(1)}}(t,x;x)}_{L_{x}^{3/2}}\lesssim \frac{E_{0}}{\hbar}.
\end{align}
Combining $(\ref{equ:convergence,uniform in h,bbgky-hartree,moment,low frequency,L3/2 norm})$ with $(\ref{equ:convergence,uniform in h,bbgky-hartree,moment,low frequency,L3/2 norm,hartree})$, we have
\begin{align}\label{equ:convergence,uniform in h,bbgky-hartree,moment,IV}
IV\lesssim \lrs{\frac{E_{0,\hbar}}{\hbar}}^{3-\frac{3}{r}}.
\end{align}

Putting together with estimates $(\ref{equ:convergence,uniform in h,bbgky-hartree,moment,I})$, $(\ref{equ:convergence,uniform in h,bbgky-hartree,moment,II})$, $(\ref{equ:convergence,uniform in h,bbgky-hartree,moment,III})$ and $(\ref{equ:convergence,uniform in h,bbgky-hartree,moment,IV})$, we arrive at
\begin{align}
&\n{J_{N,\hbar}^{(1)}(t,x;x)-J_{N,\hbar}(t,x)}_{L_{x}^{r}}\\
\lesssim &E_{0,\hbar}^{\frac{1}{r}-\frac{1}{2}}\lrs{\frac{M^{\frac{d}{2}}}{(\ln N)^{100}}}^{2-\frac{2}{r}}+\lrs{\frac{E_{0,\hbar}}{\hbar M}}^{\frac{3}{r}-2}\lrs{\frac{E_{0,\hbar}}{\hbar}}^{3-\frac{3}{r}}\notag
\end{align}
Setting $M=(\ln N)^{20}$,
\begin{align*}
\leq& \frac{E_{0,\hbar}}{\hbar}\lrs{\frac{1}{(\ln N)^{10}}}^{\min\lr{1-\frac{1}{r},\frac{3}{r}-2}}.
\end{align*}
For fixed $r\in (1,3/2)$, we make use of the restriction $(\ref{equ:restriction,N,h,bbgky-hnls})$ to obtain
\begin{align}
\n{J_{N,\hbar}^{(1)}(t,x;x)-J_{N,\hbar}(t,x)}_{L_{x}^{r}}
\lesssim & \frac{1}{(\ln N)^{5\min\lr{1-\frac{1}{r},\frac{3}{r}-2}}},
\end{align}
which completes the proof of $(\ref{equ:convergence, uniform in h,bbgky-hartree,moment})$.

For the pressure estimate $(\ref{equ:convergence, uniform in h,bbgky-hartree,pressure})$, we set
\begin{align}
p^{\pm}_{N,\hbar}(t,x_{1},x_{1}')=\lrc{B_{N,1,2}^{\pm}\lrs{\ga_{N,\hbar}^{(2)}-\ga_{H,\hbar}^{(2)}}}(t,x_{1},x_{1}').
\end{align}
Again we split
\begin{align}
p^{\pm}_{N,\hbar}=\lrs{P_{\leq M}^{1'}+P_{>M}^{1'}}p^{\pm}_{N,\hbar}
\end{align}
with $M$ to be determined. We use H\"{o}lder and Bernstein inequalities to obtain
\begin{align}
&\bbn{\lrs{P_{\leq M}^{1'}p^{\pm}_{N,\hbar}}(t,x;x)}_{L^{1}([0,T_{0}];L_{x}^{1}(B_{R}))}\\
\leq&R^{\frac{d}{2}}\bbn{\lrs{P_{\leq M}^{1}p^{\pm}_{N,\hbar}}(t,x;x)}_{L^{1}([0,T_{0}];L_{x}^{2}(B_{R}))}\notag\\
\leq& R^{\frac{d}{2}}\n{\lrs{P_{\leq M}^{1'}p^{\pm}_{N,\hbar}}(t,x;x')}_{L^{1}([0,T_{0}];L_{x}^{2}L_{x'}^{\infty}(\R^{d}))}\notag\\
\leq&M^{\frac{d}{2}}R^{\frac{d}{2}}\n{\lrs{P_{\leq M}^{1'}p^{\pm}_{N,\hbar}}(t,x;x')}_{L^{1}([0,T_{0}];L_{x}^{2}L_{x'}^{2}(\R^{d}))}\notag
\end{align}
By estimate $(\ref{equ:decay estimate, uniform in h,bbgky-hartree,collapsing part})$, we arrive at
\begin{align}\label{equ:convergence,uniform in h,bbgky-hartree,pressure,low frequency}
\bbn{\lrs{P_{\leq M}^{1'}p^{\pm}_{N,\hbar}}(t,x;x)}_{L^{1}([0,T_{0}];L_{x}^{1}(B_{R}))}\leq \frac{\hbar M^{\frac{d}{2}}R^{\frac{d}{2}}}{(\ln N)^{100}}.
\end{align}

On the other hand, we note that
\begin{align*}
P_{>M}^{1'}B_{N,1,2}^{+}\ga_{N,\hbar}^{(2)}(t,x_{1};x_{1}')=&\int V_{N}(x_{1}-x_{2})\psi_{N,\hbar}(t,x_{1},\textbf{x}_{2,N})\ol{P_{>M}^{1}\psi_{N,\hbar}}(t,x_{1}',\textbf{x}_{2,N})d\textbf{x}_{2,N}.
\end{align*}
Hence, by Cauchy-Schwarz we have
\begin{align}\label{equ:convergence,uniform in h,bbgky-hartree,pressure,high frequency}
&\int |P_{>M}^{1'}B_{N,1,2}^{+}\ga_{N,\hbar}^{(2)}(t,x_{1};x_{1})| dx_{1}\\
\leq &\lra{\psi_{N,\hbar},V_{N}(x_{1}-x_{2})\psi_{N,\hbar}}^{1/2}
\lra{P_{>M}^{1}\psi_{N,\hbar},V_{N}(x_{1}-x_{2})P_{>M}^{1}\psi_{N,\hbar}}^{1/2}\notag
\end{align}
By estimate $(\ref{equ:Sobolev type estimate, 3d, L1})$ in
Lemma \ref{lemma:Sobolev type estimate, 3d}, Bernstein inequality, and the $N$-body energy bound $(\ref{equ:n-body energy bound})$,
\begin{align*}
&\lesssim \lra{\psi_{N,\hbar},(1-\Delta_{x_{1}})(1-\Delta_{x_{2}})\psi_{N,\hbar}}^{1/2}\lra{P_{>M}^{1}\psi_{N,\hbar},\lrc{(1-\Delta_{x_{1}})
(1-\Delta_{x_{2}})}^{\frac{d}{4}+}P_{>M}^{1}\psi_{N,\hbar}}^{1/2}\\
&\lesssim \frac{1}{M^{\lrs{1-\frac{d}{4}}-}} \n{\lra{\nabla_{x_{1}}}\lra{\nabla_{x_{2}}}\psi_{N,\hbar}}_{L^{2}}\n{\lra{\nabla_{x_{1}}}
\lra{\nabla_{x_{2}}}P_{>M}^{1}\psi_{N,\hbar}}_{L^{2}}\\
&\lesssim \frac{E_{0,\hbar}^{2}}{\hbar^{4}M^{\lrs{1-\frac{d}{4}}-}}.
\end{align*}
In the same method, we use the energy bound for $\phi_{N,\hbar}$ to get
\begin{align}\label{equ:convergence,uniform in h,bbgky-hartree,pressure,high frequency,hartree}
\n{P_{>M}^{1'}B_{N,1,2}^{\pm}\ga_{H,\hbar}^{(2)}(t,x;x)}_{L_{x}^{1}}
\lesssim \frac{E_{0}^{2}}{\hbar^{4}M^{\lrs{1-\frac{d}{4}}-}}.
\end{align}

Estimates $(\ref{equ:convergence,uniform in h,bbgky-hartree,pressure,low frequency})$, $(\ref{equ:convergence,uniform in h,bbgky-hartree,pressure,high frequency})$, and $(\ref{equ:convergence,uniform in h,bbgky-hartree,pressure,high frequency,hartree})$ together give
\begin{align}
&\bbn{\lrs{B_{N,1,2}^{\pm}\ga_{N,\hbar}^{(2)}}\lrs{t,x,x}
-\lrs{\rho_{N,\hbar}V_{N}*\rho_{N,\hbar}}(t,x)}_{L^{1}([0,T_{0}];L^{1}(B_{R}))}\\
=&\bbn{\lrc{B_{N,1,2}^{\pm}\lrs{\ga_{N,\hbar}^{(2)}-\ga_{H,\hbar}^{(2)}}}\lrs{t,x;x}
}_{L^{1}([0,T_{0}];L^{1}(B_{R}))}\notag\\
\lesssim &\frac{\hbar M^{\frac{d}{2}}R^{\frac{d}{2}}}{(\ln N)^{100}}+\frac{T_{0}E_{0,\hbar}^{2}}{\hbar^{4}M^{\lrs{1-\frac{d}{4}}-}}\notag
\end{align}
By taking $M=(\ln N)^{50}$,
\begin{align*}
\leq & \frac{\hbar R^{\frac{d}{2}}}{(\ln N)^{10}}+\frac{T_{0}E_{0,\hbar}^{4}}{\hbar^{4}(\ln N)^{10}}.
\end{align*}
For fixed $T_{0}$, we utilize the restriction $(\ref{equ:restriction,N,h,bbgky-hnls})$ to get
\begin{align*}
\bbn{\lrs{B_{N,1,2}^{\pm}\ga_{N,\hbar}^{(2)}}\lrs{t,x;x}
-\lrs{\rho_{N,\hbar}V_{N}*\rho_{N,\hbar}}(t,x)}_{L^{1}([0,T_{0}];L^{1}(B_{R}))}\lesssim \frac{R^{d/2}+T_{0}}{\ln N},
\end{align*}
which completes the proof of $(\ref{equ:convergence, uniform in h,bbgky-hartree,pressure})$.

\end{proof}
The proof of Theorem $\ref{theorem:bbgky-to-n-hartree}$ is hence concluded assuming
$(\ref{equ:decay estimate, uniform in h,bbgky-hartree})$ and $(\ref{equ:decay estimate, uniform in h,bbgky-hartree,collapsing part})$
included in Proposition
$\ref{proposition:decay estimate, uniform in h, bbgky-hartree}$. The rest of Section \ref{section:BBGKY hierarchy v.s. H-NLS: Long-time Uniform in h Estimates} is to prove Proposition $\ref{proposition:decay estimate, uniform in h, bbgky-hartree}$.

\subsection{A Tool Box of Space-time Estimates}\label{section:A Tool Box of Space-time Estimates}
We reproduce and rewrite \cite[Section 2]{CH21quantitative} with $\hbar$ for our purpose here and provide some preliminary estimates for $w_{N,\hbar}^{(k)}$.
We start by rewriting the $3D$ cubic BBGKY hierarchy
$(\ref{equ:bbgky hierarchy,differential form})$ in integral form
\begin{align}\label{equ:cubic BBGKY hierarchy, integral form}
\ga_{N,\hbar}^{(k)}=&U_{\hbar}^{(k)}\ga_{N,\hbar}^{(k)}(0)+\int_{0}^{t_{k}}
U_{\hbar}^{(k)}(t_{k}-t_{k+1})V_{N,\hbar}^{(k)}\ga_{N,\hbar}^{(k)}(t_{k+1})dt_{k+1}\\
&+\frac{N-k}{N}\int_{0}^{t_{k}}U_{\hbar}^{(k)}(t_{k}-t_{k+1})B_{N,\hbar}^{(k+1)}
\ga_{N,\hbar}^{(k+1)}(t_{k+1})dt_{k+1}\notag
\end{align}
where we have adopted the shorthands\footnote{Please notice that we have divided by $\hbar$ to
use $(\ref{equ:propogator with h})$.}
\begin{align}
&U_{\hbar}^{(k)}=\prod_{j=1}^{k}e^{it\hbar\Delta_{x_{j}}/2}e^{-it\hbar\Delta_{x_{j}'}/2},
\label{equ:propogator with h}\\
&V_{N,\hbar}^{(k)}\ga_{N,\hbar}^{(k)}=\frac{1}{N}\sum_{1\leq i<j\leq k}[V_{N,\hbar}(x_{i}-x_{j}),\ga_{N,\hbar}^{(k)}],\\
&V_{N,\hbar}(x)=\frac{1}{\hbar}N^{d\be}V(N^{\be}x),\\
&B_{N,\hbar}^{(k+1)}\ga_{N,\hbar}^{(k+1)}=
\sum_{j=1}^{k}B_{N,\hbar,j,k+1}\ga_{N,\hbar}^{(k+1)}=\sum_{j=1}^{k}\operatorname{Tr}_{k+1}
\lrc{V_{N,\hbar}(x_{j}-x_{k+1}),\ga_{N,\hbar}^{(k+1)}},
\end{align}
and we have omitted the $(-i)$ in front of the second and third terms in the right hand side of $(\ref{equ:cubic BBGKY hierarchy, integral form})$ as it serves as $1$ in our estimates. In addition to $(\ref{equ:cubic BBGKY hierarchy, integral form})$, we write $(\ref{equ:h-nls hierarchy,differential form})$ in integral form
\begin{align}\label{equ:H-NLS hierarchy}
\ga_{H,\hbar}^{(k)}(t_{k})=U_{\hbar}^{(k)}(t_{k})\ga_{H,\hbar}^{(k)}(0)+
\int_{0}^{t_{k}}U_{\hbar}^{(k)}(t_{k}-t_{k+1})B_{N,\hbar}^{(k+1)}\ga_{H,\hbar}^{(k+1)}(t_{k+1})dt_{k+1}.
\end{align}

The difference $w_{N,\hbar}^{(k)}=\ga_{N,\hbar}^{(k)}-\ga_{H,\hbar}^{(k)}$ solves the hierarchy
\begin{align}\label{equ:difference hierarchy}
w_{N,\hbar}^{(k)}(t_{k})=&U_{\hbar}^{(k)}(t_{k})w_{N,\hbar}^{(k)}(0)+\int_{0}^{t_{k}}
U_{\hbar}^{(k)}(t_{k}-t_{k+1})V_{N,\hbar}^{(k)}\ga_{N,\hbar}^{(k)}(t_{k+1})dt_{k+1}\\
&-\frac{k}{N}\int_{0}^{t_{k}}U_{\hbar}^{(k)}(t_{k}-t_{k+1})B_{N,\hbar}^{(k+1)}\ga_{N,\hbar}^{(k+1)}(t_{k+1})dt_{k+1}\notag\\
&+\int_{0}^{t_{k}}U_{\hbar}^{(k)}(t_{k}-t_{k+1})B_{N,\hbar}^{(k+1)}w_{N,\hbar}^{(k+1)}(t_{k+1})dt_{k+1}.\notag
\end{align}

Iterating hierarchy $(\ref{equ:difference hierarchy})$ $l_{c}$ times\footnote{$l_{c}$ means ``coupling level".} at the last term of $(\ref{equ:difference hierarchy})$, we have
\begin{align}\label{equ:the difference hierarchy, decomposition}
w_{N,\hbar}^{(k)}(t_{k})=&FP^{(k,l_{c})}(t_{k})+DP^{(k,l_{c})}(t_{k})+EP^{(k,l_{c})}(t_{k})+IP^{(k,l_{c})}(t_{k})
\end{align}
where we have grouped the terms in $w_{N,\hbar}^{(k)}(t_{k})$ into four parts: the
 free/ driving/ error/ interaction parts. We remark that $(\ref{equ:the difference hierarchy, decomposition})$ holds for all $l_{c}\geq 1$ and we will select $l_{c}$
 depending on what aspect of $w_{N,\hbar}^{(k)}$ we need in Section
 $\ref{section:A Klainerman-Machedon Bound 1st}-\ref{section:Convergence Rate for Every Finite Time}$. To write out the four parts of $w_{N,\hbar}^{(k)}$, we define the notation that, for $j\geq 1$,
\begin{align}
&J_{N,\hbar}^{(k,j)}(t_{k},\underline{t}_{(k,j)})(f^{(k+j)}(t_{k+j}))\\
=&\lrs{U_{\hbar}^{(k)}(t_{k}-t_{k+1})B_{N,\hbar}^{(k+1)}}\ccc\lrs{U_{\hbar}^{(k+j-1)}(t_{k+j-1}-t_{k+j})B_{N,\hbar}^{(k+j)}}
f^{(k+j)}(t_{k+j}),\notag
\end{align}
and $J_{N,\hbar}^{(k,0)}(t_{k},\underline{t}_{(k,0)})=f^{(k)}(t_{k})$, where $\underline{t}_{(k,j)}=(t_{k+1},...,t_{k+j})$ for $j\geq 1$. In this notation, the free part of $w_{N,\hbar}^{(k)}$ at $l_{c}$ coupling level is
\begin{align}
FP^{(k,l_{c})}(t_{k})=&U_{\hbar}^{(k)}(t_{k})w_{N,\hbar}^{(k)}(0)+\\
&\sum_{j=1}^{l_{c}}\int_{0}^{t_{k}}\ccc\int_{0}^{t_{k+j-1}}U_{\hbar}^{(k)}(t_{k}-t_{k+1})
B_{N,\hbar}^{(k+1)}\ccc \notag\\
&\times U_{\hbar}^{(k+j-1)}(t_{k+j-1}-t_{k+j})B_{N,\hbar}^{(k+j)}\lrs{U_{\hbar}^{(k+j)}(t_{k+j})w_{N,\hbar}^{(k+j)}(0)}
d\underline{t}_{(k,j)}\notag\\
=&\sum_{j=0}^{l_{c}}\int_{0}^{t_{k}}\ccc \int_{0}^{t_{k+j-1}}
J_{N,\hbar}^{(k,j)}(t_{k},\underline{t}_{(k,j)})(f_{FP}^{(k,j)}(t_{k+j}))d\underline{t}_{(k,j)}\notag
\end{align}
where in the $j=0$ case, it is meant that there are no time integrals and $J_{N,\hbar}^{(k,0)}$ is the identity operator, and
\begin{align}\label{equ:fp for k,j}
f_{FP}^{(k,j)}(t_{k+j})=U_{\hbar}^{(k+j)}(t_{k+j})w_{N,\hbar}^{(k+j)}(0).
\end{align}
The driving part is given by
\begin{align}
DP^{(k,l_{c})}(t_{k})
=&\int_{0}^{t_{k}}U_{\hbar}^{(k)}(t_{k}-t_{k+1})V_{N,\hbar}^{(k)}\ga_{N,\hbar}^{(k)}(t_{k+1})dt_{k+1}+ \\
&\sum_{j=1}^{l_{c}}\int_{0}^{t_{k}}\ccc\int_{0}^{t_{k+j-1}}U_{\hbar}^{(k)}(t_{k}-t_{k+1})
B_{N,\hbar}^{(k+1)}\ccc U_{\hbar}^{(k+j-1)}(t_{k+j-1}-t_{k+j})B_{N,\hbar}^{(k+j)}\notag\\
&\quad \lrs{\int_{0}^{t_{k+j}}U_{\hbar}^{(k+j)}(t_{k+j}-t_{k+j+1})V_{N,\hbar}^{(k+j)}\ga_{N,\hbar}^{(k+j)}(t_{k+j+1})
dt_{k+j+1}}d\underline{t}_{(k,j)}\notag\\
=&\sum_{j=0}^{l_{c}}\int_{0}^{t_{k}}\ccc \int_{0}^{t_{k+j-1}}J_{N,\hbar}^{(k,j)}(\underline{t}_{(k,j)})
\lrs{f_{DP}^{(k,j)}(t_{k+j})}d\underline{t}_{(k,j)}\notag
\end{align}
where in the $j=0$ case, it is meant that there are no time integrals and $J_{N,\hbar}^{(k,0)}$ is the identity operator, and
\begin{align} \label{equ:dp for k,j}
f_{DP}^{(k,j)}(t_{k+j})=\int_{0}^{t_{k+j}}U_{\hbar}^{(k+j)}(t_{k+j}-t_{k+j+1})V_{N,\hbar}^{(k+j)}
\ga_{N,\hbar}^{(k+j)}(t_{k+j+1})dt_{k+j+1}.
\end{align}
The error part is given by
\begin{align}
&EP^{(k,l_{c})}(t_{k})\\
=&-\frac{k}{N}\int_{0}^{t_{k}}U_{\hbar}^{(k)}(t_{k}-t_{k+1})B_{N,\hbar}^{(k+1)}\ga_{N,\hbar}^{(k+1)}(t_{k+1})dt_{k+1} \notag \\
&-\sum_{j=1}^{l_{c}}\frac{k+j}{N}\int_{0}^{t_{k}}\ccc\int_{0}^{t_{k+j-1}}U_{\hbar}^{(k)}(t_{k}-t_{k+1})
B_{N,\hbar}^{(k+1)}\ccc U_{\hbar}^{(k+j-1)}(t_{k+j-1}-t_{k+j})B_{N,\hbar}^{(k+j)}\notag\\
&\times\lrs{\int_{0}^{t_{k+j}}U_{\hbar}^{(k+j)}(t_{k+j}-t_{k+j+1})B_{N,\hbar}^{(k+j+1)}\ga_{N,\hbar}^{(k+j+1)}(t_{k+j+1})
dt_{k+j+1}}d\underline{t}_{(k,j)}\notag\\
=&\sum_{j=1}^{l_{c}+1}\int_{0}^{t_{k}}\ccc \int_{0}^{t_{k+j-1}}J_{N,\hbar}^{(k,j)}(\underline{t}_{(k,j)})
\lrs{f_{EP}^{(k,j)}(t_{k+j})}d\underline{t}_{(k,j)}\notag
\end{align}
where in the $j=0$ case, it is meant that there are no time integrals and $J_{N,\hbar}^{(k,0)}$ is the identity operator, and
\begin{align}\label{equ:ep for k,j}
f_{EP}^{(k,j)}(t_{k+j})=-\frac{k+j-1}{N}
\ga_{N,\hbar}^{(k+j)}.
\end{align}
The interaction part is given by
\begin{align}
IP^{(k,l_{c})}(t_{k})=&\int_{0}^{t_{k}}\ccc \int_{0}^{t_{k+l_{c}}}
U_{\hbar}^{(k)}(t_{k}-t_{k+1})B_{N,\hbar}^{(k+1)}\ccc \\
&\ccc U^{(k+l_{c})}(t_{k+l_{c}}-t_{k+l_{c}+1})B_{N,\hbar}^{(k+l_{c}+1)}
\lrs{w_{N,\hbar}^{(k+l_{c}+1)}(t_{k+l_{c}+1})}dt_{k+1}\ccc dt_{k+l_{c}+1}\notag\\
=&\int_{0}^{t_{k}}\ccc \int_{0}^{t_{k+l_{c}}}J_{N,\hbar}^{(k,l_{c}+1)}(t_{k},\underline{t}_{(k,l_{c}+1)})
\lrs{w_{N,\hbar}^{(k+l_{c}+1)}(t_{k+l_{c}+1})}d\underline{t}_{(k,l_{c}+1)}\notag
\end{align}
where
\begin{align}\label{equ:ip for k,j}
f_{IP}^{(k,l_{c}+1)}=w_{N,\hbar}^{(k+l_{c}+1)}(t_{k+l_{c}+1}).
\end{align}

There are around $\frac{(k+l_{c})!}{k!}$ many summands in each part. They can be grouped together by using the KM board game argument \cite{KM08}, which is below.
\begin{lemma}[\cite{KM08}, Lemma 2.1]\footnote{More advanced version of this combinatoric is now available, see \cite{CH20,CSZ21}.} \label{lemma:KM combinatorics bound}
For $j\geq 1$, one can express
\begin{align}
&\int_{0}^{t_{k}}\ccc\int_{0}^{t_{k+j-1}}J_{N,\hbar}^{(k,j)}(t_{k},\underline{t}_{(k,j)})(f^{(k+j)})
d\underline{t}_{(k,j)}\\
=&\sum_{m}\int_{D}J_{N,\hbar}^{(k,j)}(t_{k},\underline{t}_{(k,j)},\mu_{m})(f^{(k+j)})d\underline{t}_{(k,j)}.\notag
\end{align}
Here $D\subset [0,t_{k}]^{j}$, $\mu_{m}$ are a set of maps from $\lr{k+1,...,k+j}$ to
$\lr{1,...,k+j-1}$ and $\mu_{m}(l)<l$ for all $l$, and
\begin{align}
&J_{N,\hbar}^{(k,j)}(t_{k},\underline{t}_{(k,j)},\mu_{m})(f^{(k+j)})\\
=&\lrs{U_{\hbar}^{(k)}(t_{k}-t_{k+1})B_{N,\hbar,\mu_{m}(k+1),k+1}}\ccc
\lrs{U_{\hbar}^{(k+j-1)}(t_{k+j-1}-t_{k+j})B_{N,\hbar,\mu_{m}(k+j),k+j}}
f^{(k+j)}(t_{k+j})\notag
\end{align}
The summing number can be controlled by $2^{k+2j-2}$.
\end{lemma}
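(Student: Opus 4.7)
The plan is to invoke the Klainerman--Machedon ``board game'' combinatorial reduction. Starting from the iterated Duhamel expansion, the collision operator $B_{N,\hbar}^{(l)}=\sum_{i=1}^{l-1}B_{N,\hbar,i,l}$ at the $l$-th step expands into $l-1$ summands indexed by a choice $\mu(l)\in\{1,\ldots,l-1\}$, so the left-hand side is a sum of $\frac{(k+j-1)!}{(k-1)!}$ nested time-simplex integrals indexed by tuples $\mu=(\mu(k+1),\ldots,\mu(k+j))$ with $\mu(l)<l$. The goal is to group these tuples so that only $2^{k+2j-2}$ canonical representatives $\mu_m$ survive, at the cost of enlarging each underlying simplex $\{t_k\ge t_{k+1}\ge\cdots\ge t_{k+j}\ge 0\}$ into a larger union $D\subset[0,t_k]^j$.

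First I would introduce an equivalence relation on tuples: declare $\mu\sim\mu'$ if there is a permutation $\sigma$ of $\{k+1,\ldots,k+j\}$ that, when applied simultaneously to the collision labels and to the time-integration variables $(t_{k+1},\ldots,t_{k+j})\mapsto(t_{\sigma(k+1)},\ldots,t_{\sigma(k+j)})$, carries the $\mu$-integrand into the $\mu'$-integrand. Since such a relabeling is a measure-preserving change of variables, and since the free propagators $U_\hbar^{(l)}$ are tensor products that commute with the $B_{N,\hbar,i,l}$ acting on disjoint coordinates, each equivalence class collapses, after this change of variables, into a single representative integrand $J_{N,\hbar}^{(k,j)}(\cdot,\mu_m)$ integrated over the domain $D$ obtained as the union of the images of the original nested simplex under all admissible $\sigma$ in the class.

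Next I would exhibit canonical representatives via a greedy swapping algorithm: scan the sequence $(\mu(k+1),\ldots,\mu(k+j))$ and, whenever two consecutive collision operators can be swapped without changing the equivalence class, perform the swap that places the smaller target first. The algorithm terminates in a unique ``upper echelon'' normal form per class. Counting these normal forms reduces to counting certain labeled rooted binary trees with $j$ new leaves and at most $k$ root-attachment sites; a standard inductive argument then yields the estimate $2^{k+2j-2}$ on the number of classes.

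The main technical obstacle lies in the bookkeeping of the second step: one must verify that the relabelings $\sigma$ truly act cleanly on both the propagator string and the collision vertices (which holds because $B_{N,\hbar,i,l}$ touches only coordinates $i$ and $l$ while $U_\hbar^{(l)}$ factorizes over particle indices), and then check that the domains $D=D(\mu_m)$ associated with distinct canonical classes tile $[0,t_k]^j$ up to a set of measure zero. Once this tiling and the commutation relations are in place, the structural identity together with the counting bound $2^{k+2j-2}$ follow.
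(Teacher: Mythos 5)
This lemma is cited from \cite{KM08} and the paper gives no proof, so your reconstruction has to be judged on its own. The template is right --- expand the collision sums, quotient by reindexing permutations of the new time/particle variables, take echelon representatives, count them, and absorb the fiber of each class into an enlarged domain $D$ --- and you correctly identify the commutation of $B_{N,\hbar,i,l}$ with the coordinate factors of $U_\hbar^{(l')}$ acting on disjoint indices as the mechanism. But the ``tiling'' claim in your last paragraph is false for $k\geq 2$, and it is not what the lemma needs. Within a single class, $D(\mu_m)$ is an (essentially disjoint) union of permutation images of the nested simplex $\{t_k\geq t_{k+1}\geq\cdots\geq t_{k+j}\geq 0\}$, each of volume $t_k^j/j!$, so the total volume over all classes is
\begin{align*}
\sum_m \bigl|D(\mu_m)\bigr|=\frac{(k+j-1)!}{(k-1)!}\cdot\frac{t_k^j}{j!}=\binom{k+j-1}{j}\,t_k^j,
\end{align*}
which is strictly larger than $t_k^j$ once $k\geq 2$. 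The $D(\mu_m)$ therefore overlap and cannot tile $[0,t_k]^j$ (already at $k=2$, $j=1$ both $D$'s are the full interval $[0,t_2]$). The identity holds class by class and requires nothing more than $D\subset[0,t_k]^j$; trying to establish a global partition of the cube is a dead end.

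The second gap is in the definition of the equivalence relation and in the count. Declaring $\mu\sim\mu'$ when ``some permutation carries the $\mu$-integrand to the $\mu'$-integrand'' is circular: which permutations have this property is exactly the commutation content one must establish. In \cite{KM08} the relation is \emph{generated} by explicit acceptable moves --- transpose the collision data at positions $l$ and $l+1$ together with the variables $t_l$ and $t_{l+1}$ precisely when $\mu(l+1)<l$ --- and each such single transposition is licensed by the $B$--$U$ disjoint-coordinate commutation you mention. The upper echelon normal form is reached by iterating those specific moves, not by an unconstrained greedy sort; your ``swap whenever possible, smaller target first'' must be pinned to this admissibility condition to guarantee both termination and well-definedness of the representative. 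Finally, the passage from upper echelon forms to the bound $2^{k+2j-2}$ via ``labeled rooted binary trees'' and ``a standard inductive argument'' is asserted rather than shown. Since this is exactly where the numerical exponent in the lemma comes from, it needs an honest enumeration of echelon configurations, or an explicit induction on $j$ with $k$ as a parameter tracking the number of available early columns.
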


Then we are able to estimate $J_{N,\hbar}^{(k,j)}(t_{k},\underline{t}_{(k,j)})
(f^{(k+j)})$ via collapsing estimates in Lemma $\ref{lemma:collapsing estimate d=3, propogator version}$.
\begin{lemma}\label{lemma:iteration estimate}
Let $d=3$ and $\al=d+1/2$.
For $j\geq 1$,
\begin{align}\label{equ:iteration estimate, time L infty}
&\bbn{\int_{0}^{t_{k}}\ccc \int_{0}^{t_{k+j-1}}S_{\hbar}^{(1,k)}J_{N,\hbar}^{(k,j)}(t_{k},\underline{t}_{(k,j)})
(f^{(k+j)})d\underline{t}_{(k,j)}}_{L_{t_{k}}^{\wq}[0,T]L_{x,x'}^{2}}\\
\leq& 2^{k}4^{j}\lrs{C_{V}\hbar^{-\al}T^{1/2}}^{j-1}\int_{[0,T]}
\bbn{
S_{\hbar}^{(1,k+j-1)}B_{N,\hbar,1,k+j}(f^{(k+j)}(t_{k+j}))
}_{L_{x,x'}^{2}}dt_{k+j}\notag
\end{align}

\begin{align}\label{equ:iteration estimate, time L1}
&\bbn{S_{\hbar}^{(1,k)}B_{N,\hbar,1,k+1}\int_{0}^{t_{k+1}}\ccc \int_{0}^{t_{k+j}}J_{N,\hbar}^{(k+1,j)}(t_{k+1},\underline{t}_{(k+1,j)})
(f^{(k+j+1)})d\underline{t}_{(k+1,j)}}_{L_{t_{k+1}}^{1}[0,T]L_{x,x'}^{2}}\\
\leq&2^{k+1}4^{j} (C_{V}\hbar^{-\al}T^{1/2})^{j}
\int_{[0,T]}
\bbn{
S_{\hbar}^{(1,k+j)}B_{N,\hbar,1,k+j+1}(f^{(k+j+1)}(t_{k+j+1}))
}_{L_{x,x'}^{2}}dt_{k+j+1}.\notag
\end{align}

\end{lemma}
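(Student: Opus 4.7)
The plan is to combine the Klainerman--Machedon board-game rearrangement of Lemma \ref{lemma:KM combinatorics bound} with an inductive unfolding of the collapsing estimate (Lemma \ref{lemma:collapsing estimate d=3, propogator version}). First we apply Lemma \ref{lemma:KM combinatorics bound} to rewrite the iterated simplex integral as a sum of at most $2^{k+2j-2}$ structured terms indexed by the maps $\mu$ with $\mu(l)<l$; this combinatorial count is absorbed without loss into the prefactors $2^k 4^j$ and $2^{k+1} 4^j$ appearing on the right of (\ref{equ:iteration estimate, time L infty}) and (\ref{equ:iteration estimate, time L1}). It then suffices to bound each representative summand by $(C_V \hbar^{-\al} T^{1/2})^{j-1}$ or $(C_V \hbar^{-\al} T^{1/2})^{j}$ times the target right-hand norm.

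We would prove (\ref{equ:iteration estimate, time L1}) first, by induction on $j$. Each inductive step peels off the outermost shell of the nested integral by three moves in sequence: Minkowski through the outer $L^1_{t_{k+1}}$-in-$L^2_{x,x'}$ norm, Cauchy--Schwarz in $t_{k+1}$ upgrading $L^1_{t_{k+1}} \leq T^{1/2} L^2_{t_{k+1}}$, and the collapsing estimate applied to the pairing
\[
\bn{S_{\hbar}^{(1,k)} B_{N,\hbar,\mu(k+1),k+1} U_{\hbar}^{(k+1)}(t_{k+1}-t_{k+2}) g}_{L^2_{t_{k+1}} L^2_{x,x'}} \leq C_V \hbar^{-\al} \bn{S_{\hbar}^{(1,k+1)} g}_{L^2_{x,x'}},
\]
which produces a clean reduction to (\ref{equ:iteration estimate, time L1}) at level $(k+1, j-1)$. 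Each such step costs one factor $C_V \hbar^{-\al} T^{1/2}$ and promotes $S_{\hbar}^{(1,k)}$ to $S_{\hbar}^{(1,k+1)}$, so iterating $j$ times produces the required exponent and leaves behind only the final $L^1_{t_{k+j+1}}$ norm of $S_{\hbar}^{(1, k+j)} B_{N,\hbar, \mu(k+j+1), k+j+1} f^{(k+j+1)}$, which bosonic symmetry converts to the stated $B_{N,\hbar, 1, k+j+1}$.

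The estimate (\ref{equ:iteration estimate, time L infty}) then follows from (\ref{equ:iteration estimate, time L1}) by first stripping the outer $L^{\infty}_{t_k}$: pass $L^{\infty}_{t_k}$ inside the outer $t_{k+1}$ integral by Minkowski and drop the leading propagator $U_{\hbar}^{(k)}(t_k-t_{k+1})$, which commutes with $S_{\hbar}^{(1,k)}$ and is unitary on $L^2_{x,x'}$; the remaining quantity is exactly the left-hand side of (\ref{equ:iteration estimate, time L1}) at level $(k, j-1)$, producing the $(j-1)$ exponent. The chief technical obstacle will be the book-keeping at every iteration: verifying that the colliding $B$-index can be relabeled to $1$ via the permutation symmetry inherited from $f^{(k+j)} = \ga_{N,\hbar}^{(k+j)}$ (or $\ga_{H,\hbar}^{(k+j)}$) in the not-yet-collapsed variables, and tracking that the precise constants from the collapsing estimate together with the $2^{k+2j-2}$ from Lemma \ref{lemma:KM combinatorics bound} indeed multiply out to the clean prefactors stated without any accumulated loss.
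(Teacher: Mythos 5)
Your high-level strategy is the same as the paper's: apply the Klainerman--Machedon board game once, then repeatedly run Cauchy--Schwarz in the outermost time variable followed by the collapsing estimate, peeling off one shell per pass. The substance of the argument is correct. What does not hold up is the framing of (\ref{equ:iteration estimate, time L1}) as a standalone lemma to be proved by self-referential induction and then invoked to derive (\ref{equ:iteration estimate, time L infty}).

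The sticking point is the operator $B_{N,\hbar}^{(k+l)} = \sum_{m=1}^{k+l-1} B_{N,\hbar,m,k+l}$ inside the iterate. After you Minkowski the $L^\infty_{t_k}$ norm through and discard $U_\hbar^{(k)}$ in the derivation of (\ref{equ:iteration estimate, time L infty}), what remains is $\bn{S_\hbar^{(1,k)}B_{N,\hbar}^{(k+1)}\int_0^{t_{k+1}}\ldots}_{L^1_{t_{k+1}}L^2_{x,x'}}$ with the \emph{full sum} $B_{N,\hbar}^{(k+1)}$, whereas the left side of (\ref{equ:iteration estimate, time L1}) has the single collision operator $B_{N,\hbar,1,k+1}$. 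To bring it into the form of (\ref{equ:iteration estimate, time L1}) you must expand the sum and use the permutation symmetry of $f^{(k+j)}$, costing a factor $k$. The same thing happens at every peel inside your proposed induction: the step from $(k,j)$ to $(k+1,j-1)$ produces $B^{(k+2)}$ out of the collapsing estimate, and invoking (\ref{equ:iteration estimate, time L1}) at $(k+1,j-1)$ costs a factor $k+1$. Unrolling $j$ peels accumulates $(k+1)(k+2)\cdots(k+j)=(k+j)!/k!$, which is \emph{not} dominated by $2^{k+1}4^{j}$ for all $(k,j)$ (try $k=1$, $j=10$: $11! \approx 4\times 10^{7}$ versus $2^{22}\approx 4\times 10^{6}$). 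The Klainerman--Machedon board game is precisely the device that compresses $(k+j)!/k!$ to $2^{k+2j-2}$, and it only does so when applied to the full depth-$j$ iterate at once; re-invoking the lemma shell by shell re-expands the counting you just compressed.

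Your display with $B_{N,\hbar,\mu(k+1),k+1}$ shows you actually intend to work per board-game representative, where the index at each level is already pinned down to $\mu_m(k+l)$ and no factor of $k+l$ arises. Under that reading the proof is fine and is exactly the paper's: apply Lemma \ref{lemma:KM combinatorics bound} once, enlarge $\int_D$ to $\int_{[0,T]^j}$, then iterate Cauchy--Schwarz + Lemma \ref{lemma:collapsing estimate d=3, propogator version} on the fixed $\mu_m$-representative for $j-1$ (resp.\ $j$) passes, and only at the very last step use the symmetry of $f^{(k+j)}$ to relabel $\mu_m(k+j)\mapsto 1$. So you should not phrase this as ``prove (\ref{equ:iteration estimate, time L1}), then derive (\ref{equ:iteration estimate, time L infty}) from it'' -- that invites exactly the uncontrolled factor above -- and you also need not relabel the collision index mid-iteration; carrying $\mu_m$ along untouched until the final $dt_{k+j}$ integral is both sufficient and cleaner, since the intermediate collapsed objects are no longer fully symmetric.
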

\begin{proof}
This is well-known for $\hbar=1$. We include a proof for completeness.
For $(\ref{equ:iteration estimate, time L infty})$, we start by using Lemma \ref{lemma:KM combinatorics bound},
\begin{align}
&\bbn{\int_{0}^{t_{k}}\ccc \int_{0}^{t_{k+j-1}}S_{\hbar}^{(1,k)}J_{N,\hbar}^{(k,j)}(t_{k},\underline{t}_{(k,j)})
(f^{(k+j)})d\underline{t}_{(k,j)}}_{L_{t_{k}}^{\wq}[0,T]L_{x,x'}^{2}}\\
\leq &2^{k}4^{j}\bbn{\int_{D}S_{\hbar}^{(1,k)}J_{N,\hbar}^{(k,j)}(t_{k},\underline{t}_{(k,j)},\mu_{m})
(f^{(k+j)})d\underline{t}_{(k,j)}}_{L_{t_{k}}^{\wq}[0,T]L_{x,x'}^{2}}\notag\\
\leq &2^{k}4^{j}\int_{[0,T]^{j}}\bbn{S_{\hbar}^{(1,k)}J_{N,\hbar}^{(k,j)}
(t_{k},\underline{t}_{(k,j)},\mu_{m})
(f^{(k+j)})}_{L_{x,x'}^{2}}d\underline{t}_{(k,j)}\notag
\end{align}
Cauchy-Schwarz at $dt_{k+1}$,
\begin{align}\label{equ:interation estimate, starting point}
\leq 2^{k}4^{j}T^{1/2}\int_{[0,T]^{j-1}}\bbn{
S_{\hbar}^{(1,k)}B_{N,\hbar,\mu_{m}(k+1),k+1}U_{\hbar}^{(k+1)}(t_{k+1}-t_{k+2})...
}_{L_{t_{k+1}}^{2}[0,T]L_{x,x'}^{2}}d\underline{t}_{(k+1,j-1)}
\end{align}
By Lemma \ref{lemma:collapsing estimate d=3, propogator version},
\begin{align}
\leq 2^{k}4^{j}C_{V}\hbar^{-\al}T^{1/2}\int_{[0,T]^{j-1}}
\bbn{
S_{\hbar}^{(1,k+1)}B_{N,\hbar,\mu_{m}(k+2),k+2}U_{\hbar}^{(k+1)}(t_{k+2}-t_{k+3})...
}_{L_{x,x'}^{2}}d\underline{t}_{(k+1,j-1)}
\end{align}
Repeating such a process gives
\begin{align}
\leq 2^{k}4^{j}(C_{V}\hbar^{-\al}T^{1/2})^{j-1}\int_{[0,T]}
\bbn{
S_{\hbar}^{(1,k+j-1)}B_{N,\hbar,\mu_{m}(k+j),k+j}(f^{(k+j)}(t_{k+j}))
}_{L_{x,x'}^{2}}dt_{k+j}
\end{align}
By symmetry,
\begin{align}
=2^{k}4^{j}(C_{V}\hbar^{-\al}T^{1/2})^{j-1}\int_{[0,T]}
\bbn{
S_{\hbar}^{(1,k+j-1)}B_{N,\hbar,1,k+j}(f^{(k+j)}(t_{k+j}))
}_{L_{x,x'}^{2}}dt_{k+j}
\end{align}

For $(\ref{equ:iteration estimate, time L1})$, we apply Lemma \ref{lemma:KM combinatorics bound} again to obtain
\begin{align}
&\bbn{\int_{0}^{t_{k+1}}\ccc \int_{0}^{t_{k+j}}S_{\hbar}^{(1,k)}B_{N,\hbar,1,k+1}J_{N,\hbar}^{(k+1,j)}(t_{k+1},\underline{t}_{(k+1,j)})
(f^{(k+j+1)})d\underline{t}_{(k+1,j)}}_{L_{t_{k+1}}^{1}[0,T]L_{x,x'}^{2}}\\
\leq&2^{k+1}4^{j}\bbn{\int_{D}S_{\hbar}^{(1,k)}B_{N,\hbar,1,k+1}J_{N,\hbar}^{(k+1,j)}
(t_{k+1},\underline{t}_{(k+1,j)},\mu_{m})
(f^{(k+j+1)})d\underline{t}_{(k+1,j)}}_{L_{t_{k+1}}^{1}[0,T]L_{x,x'}^{2}}\notag\\
\leq&2^{k+1}4^{j}\int_{[0,T]^{j}}\bbn{S_{\hbar}^{(1,k)}B_{N,\hbar,1,k+1}J_{N,\hbar}^{(k+1,j)}
(t_{k+1},\underline{t}_{(k+1,j)},\mu_{m})
(f^{(k+j+1)})}_{L_{t_{k+1}}^{1}[0,T]L_{x,x'}^{2}}d\underline{t}_{(k+1,j)}\notag
\end{align}
Cauchy-Schwarz at $dt_{k+1}$,
\begin{align}
\leq &2^{k+1}4^{j}T^{1/2}\int_{[0,T]^{j}}\bbn{S_{\hbar}^{(1,k)}B_{N,\hbar,1,k+1}
U_{\hbar}^{(k+1)}(t_{k+1}-t_{k+2})
\ccc}_{L_{t_{k+1}}^{2}[0,T]L_{x,x'}^{2}}d\underline{t}_{(k+1,j)}\notag
\end{align}
Iterating the same process as $(\ref{equ:interation estimate, starting point})$, we obtain
\begin{align}
\leq&2^{k+1}4^{j} (C_{V}\hbar^{-\al}T^{1/2})^{j}
\int_{[0,T]}
\bbn{
S_{\hbar}^{(1,k+j)}B_{N,\hbar,1,k+j+1}(f^{(k+j+1)}(t_{k+j+1}))
}_{L_{x,x'}^{2}}dt_{k+j+1}.
\end{align}
\end{proof}

Away from Lemma $\ref{lemma:iteration estimate}$, we obtain below crude estimates of the driving part, error part and the interaction part.
\begin{lemma}\label{lemma:collapsing estimate for dp,ep,ip,last step}
Let $k\leq (\ln N)^{10}$ and $j\leq (\ln N)^{10}$. For the driving part, we have
\begin{align}\label{equ:collapsing estimate for dp,last step,j=0}
&\n{S_{\hbar}^{(1,k)}f_{DP}^{(k,0)}
(t_{k})}_{L_{t_{k}}^{\wq}[0,T]L_{x,x'}^{2}}
\leq N^{\frac{5}{2}\be-1}(C_{V}\hbar^{-\al}T^{1/2})k^{2}(2E_{0,\hbar})^{k}
\end{align}
and
\begin{align}\label{equ:collapsing estimate for dp,last step,j>0}
&\int_{[0,T]}
\bbn{
S_{\hbar}^{(1,k+j-1)}B_{N,\hbar,1,k+j}(f_{DP}^{(k,j)}(t_{k+j}))
}_{L_{x,x'}^{2}}dt_{k+j}\\
\leq&N^{\frac{5}{2}\be-1}\lrs{C_{V}\hbar^{-\al}T^{1/2}}^{2}(k+j)^{2}\lrs{2E_{0,\hbar}}^{k+j}.\notag
\end{align}

For the error part, we have
\begin{align}\label{equ:collapsing estimate for ep,last step,j>0}
&\int_{[0,T]}
\bbn{
S_{\hbar}^{(1,k+j-1)}B_{N,\hbar,1,k+j}(f_{EP}^{(k,j)}(t_{k+j}))
}_{L_{x,x'}^{2}}dt_{k+j}\\
\leq &N^{\frac{5}{2}\be-1}\lrs{C_{V}\hbar^{-\al}T^{1/2}}(k+j)\lrs{2E_{0,\hbar}}^{k+j}.\notag
\end{align}

For the interaction part, we have
\begin{align}\label{equ:collapsing estimate for ip,last step,j>0}
&\int_{[0,T]}
\bbn{
S_{\hbar}^{(1,k+j-1)}B_{N,\hbar,1,k+j}(f_{IP}^{(k,j)}(t_{k+j}))
}_{L_{x,x'}^{2}}dt_{k+j}\\
\leq &N^{\frac{5}{2}\be}\lrs{C_{V}\hbar^{-\al}T^{1/2}}\lrs{4E_{0,\hbar}}^{k+j}.\notag
\end{align}
\end{lemma}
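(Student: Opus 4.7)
The proof assembles three parallel estimates (for DP, EP, IP) from the same three ingredients. In each case I would (i) apply Cauchy--Schwarz in the outer time variable $t_{k+j}$ to trade the $L^1_t[0,T]$ norm for $T^{1/2}$ times an $L^2_t$ norm, (ii) invoke the propagator version of the collapsing estimate (Lemma \ref{lemma:collapsing estimate d=3, propogator version}, the same tool already used inside Lemma \ref{lemma:iteration estimate}) whenever a $B_{N,\hbar,\cdot,\cdot}$ operator is paired with a free propagator $U_\hbar^{(\cdot)}$, picking up a factor $C_V\hbar^{-\alpha}$ per pairing, and (iii) apply the $N$-body energy bound $(\ref{equ:n-body energy bound})$ to convert $\|S_\hbar^{(1,k+j)}\gamma_{N,\hbar}^{(k+j)}\|_{L^2}$ into $(2E_{0,\hbar})^{k+j}$. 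The $(4E_{0,\hbar})^{k+j}$ in $(\ref{equ:collapsing estimate for ip,last step,j>0})$ is then just the triangle inequality $\|S_\hbar w_{N,\hbar}^{(k+j)}\|\le \|S_\hbar\gamma_{N,\hbar}^{(k+j)}\|+\|S_\hbar\gamma_{H,\hbar}^{(k+j)}\|\le 2(2E_{0,\hbar})^{k+j}$, absorbed into $(4E_{0,\hbar})^{k+j}$.

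For $(\ref{equ:collapsing estimate for ep,last step,j>0})$ and $(\ref{equ:collapsing estimate for ip,last step,j>0})$, the source $f_{EP/IP}$ is propagator-free, so I would first insert a propagator by writing the density in the form $\gamma_{N,\hbar}^{(k+j)}(t)=U_\hbar^{(k+j)}(t)\tilde\gamma(t)$ with $\tilde\gamma(t):=U_\hbar^{(k+j)}(-t)\gamma_{N,\hbar}^{(k+j)}(t)$. Unitarity of $U_\hbar^{(k+j)}$ and its commutation with $S_\hbar^{(1,k+j)}$ preserve the energy bound on $\tilde\gamma(t)$ uniformly in $t$. Then the three-step scheme produces exactly $T^{1/2}C_V\hbar^{-\alpha}(2E_{0,\hbar})^{k+j}$ (resp.\ $(4E_{0,\hbar})^{k+j}$); the prefactor $(k+j-1)/N$ from $f_{EP}^{(k,j)}$ yields the EP estimate after using $N^{-1}\le N^{5\beta/2-1}$ in the statement's upper bound, while the absence of any such prefactor for $f_{IP}$ yields the bare $N^{5\beta/2}$ in IP.

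The driving part, $(\ref{equ:collapsing estimate for dp,last step,j=0})$--$(\ref{equ:collapsing estimate for dp,last step,j>0})$, is one step longer because $f_{DP}^{(k,j)}$ itself contains a Duhamel integral with the internal interaction $V_{N,\hbar}^{(k+j)}=\frac{1}{N}\sum_{i<i'\le k+j}[V_{N,\hbar}(x_i-x_{i'}),\cdot]$. For $j=0$, I would apply Cauchy--Schwarz in $t_{k+1}$ and, for each of the $\binom{k}{2}\le k^2$ pairs, commute $S_\hbar^{(1,k)}$ past multiplication by $V_{N,\hbar}(x_i-x_{i'})$; the resulting commutator places one $\hbar\nabla$ on the potential, producing $\hbar\nabla V_{N,\hbar}=\nabla V_N$, whose $L^2$ norm is exactly $N^{5\beta/2}\|\nabla V\|_{L^2}$. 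Together with the $1/N$ built into $V_{N,\hbar}^{(k)}$ this supplies the prefactor $N^{5\beta/2-1}k^2$, the Sobolev/Hölder estimate on the resulting product yields the single $C_V\hbar^{-\alpha}T^{1/2}$, and the energy bound furnishes $(2E_{0,\hbar})^k$. For $j\ge 1$ I would interchange the two time integrals via Minkowski so that the propagator $U_\hbar^{(k+j)}(t_{k+j}-t_{k+j+1})$ sits between the outer $B_{N,\hbar,1,k+j}$ and the inner $V_{N,\hbar}^{(k+j)}\gamma$; the outer pairing then gives one $C_V\hbar^{-\alpha}T^{1/2}$ by the collapsing estimate as in EP, and the inner analysis above gives the second $C_V\hbar^{-\alpha}T^{1/2}$, explaining the quadratic $(C_V\hbar^{-\alpha}T^{1/2})^2$ in $(\ref{equ:collapsing estimate for dp,last step,j>0})$.

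The main technical nuisance I anticipate is the book-keeping in the commutator analysis for the internal potential $V_{N,\hbar}^{(k+j)}$: the $2(k+j)$ factors in $S_\hbar^{(1,k+j)}=\prod_{l}\langle\hbar\nabla_{x_l}\rangle\langle\hbar\nabla_{x_l'}\rangle$ must be split so that the two ``active'' derivatives (those touching the variables $x_i,x_{i'}$ appearing in $V_{N,\hbar}(x_i-x_{i'})$) interact correctly with $\nabla V_N$ under Hölder and Sobolev, while the remaining $2(k+j-2)$ inert derivatives simply pass through and eventually feed into the energy bound. This is precisely what dictates the power $N^{5\beta/2}$ (rather than, e.g., $N^{3\beta/2}$ or $N^{2\beta}$) and what forces the particular choice $\alpha=d+1/2=7/2$ in the $\hbar^{-\alpha}$ loss inherited from the collapsing estimate.
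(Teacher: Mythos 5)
Your outline diverges from the paper on the two points that carry the real content, and both steps as written have genuine gaps.

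First, for the error and interaction parts $(\ref{equ:collapsing estimate for ep,last step,j>0})$--$(\ref{equ:collapsing estimate for ip,last step,j>0})$, your device of writing $\gamma_{N,\hbar}^{(k+j)}(t)=U_\hbar^{(k+j)}(t)\tilde\gamma(t)$ with $\tilde\gamma(t)=U_\hbar^{(k+j)}(-t)\gamma_{N,\hbar}^{(k+j)}(t)$ and then invoking the collapsing estimate of Lemma \ref{lemma:collapsing estimate d=3, propogator version} does not work. That lemma is a Strichartz-type smoothing estimate and is stated only for a source $f^{(k+1)}$ which is \emph{independent of time}: the $L^2_t$ gain on the left is produced by the temporal oscillation of $U_\hbar$, and that oscillation is killed once the source is allowed to depend on $t$. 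Indeed, take $\tilde\gamma(t)=U_\hbar(-t)g$ for a fixed $g$; then $U_\hbar(t)\tilde\gamma(t)=g$ is constant, $\|S_\hbar B U_\hbar\tilde\gamma\|_{L^2_t L^2_{x,x'}}=T^{1/2}\|S_\hbar B g\|_{L^2_{x,x'}}$, and you are back to needing a pointwise Sobolev bound $\|S_\hbar^{(1,k+j)}B_{N,\hbar}g\|_{L^2}\lesssim\|S_\hbar^{(1,k+j+1)}g\|_{L^2}$, which in $d=3$ at regularity level one cannot be obtained from the Strichartz lemma. What the paper actually uses here is Lemma \ref{lemma:collapsing estimate for ep,ip,last step}, a \emph{pointwise-in-time} Sobolev-type estimate that bounds $\|S_\hbar^{(1,k+j)}B_{N,\hbar,1,k+j+1}\gamma^{(k+j+1)}\|_{L^2_{x,x'}}$ at each fixed $t$ at the cost of a factor $N^{5\beta/2}$, and then simply integrates over $[0,T]$, writing $T=T^{1/2}\cdot T^{1/2}$. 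This is a \emph{loss} of $N^{5\beta/2}$, not a Strichartz gain; it is tolerable because in EP it is compensated by the $1/N$ prefactor in $f_{EP}^{(k,j)}$, while in IP the bare $N^{5\beta/2}$ is left in the statement (note that $(\ref{equ:collapsing estimate for ip,last step,j>0})$ has no $N^{-1}$).

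Second, for the driving part $(\ref{equ:collapsing estimate for dp,last step,j=0})$--$(\ref{equ:collapsing estimate for dp,last step,j>0})$, your commutator/Leibniz argument, in which $S_\hbar^{(1,k)}$ is pushed past $V_{N,\hbar}(x_i-x_{i'})$ and the gain is attributed to $\|\nabla V_N\|_{L^2}=N^{5\beta/2}\|\nabla V\|_{L^2}$, does not produce the claimed power of $N$. Putting $\nabla V_N$ in $L^2$ forces its partner into $L^\infty$, which in $d=3$ costs more than $3/2$ derivatives and is unavailable at the $H^1$ level guaranteed by $(\ref{equ:n-body energy bound})$; a Hölder exponent compatible with the available regularity (e.g.\ $L^{3}\times L^6$) gives $N^{3\beta}$ or worse, not $N^{5\beta/2}$. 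The $N^{5\beta/2}$ in $(\ref{equ:collapsing estimate for dp,last step,j=0})$--$(\ref{equ:collapsing estimate for dp,last step,j>0})$ is the $X_{s,b}$ gain from Lemma \ref{lemma:collapsing estimate for dp,last step,scaling}, whose $\hbar=1$ version is cited from \cite[Lemmas 4.1, 4.3, 4.6]{CH16on}; it is a genuinely dispersive (time-frequency) estimate and is exactly what lets $\beta$ reach $2/5$. Your more elementary approach would only reach the smaller range $\beta<1/3$, if that. The final sentence of your proposal, attributing the $N^{5\beta/2}$ power to ``book-keeping in the commutator analysis,'' misidentifies the source of the gain.

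The remaining scaffolding (Cauchy--Schwarz in $t_{k+j}$ to get $T^{1/2}$, the $N$-body energy bound $(\ref{equ:n-body energy bound})$ converting $\|S_\hbar^{(1,k+j)}\gamma^{(k+j)}\|_{L^2}$ into $(2E_{0,\hbar})^{k+j}$, the triangle inequality producing $(4E_{0,\hbar})^{k+j}$ for IP, and the role of the $(k+j-1)/N$ prefactor for EP) is consistent with the paper.
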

\begin{proof}
For $(\ref{equ:collapsing estimate for dp,last step,j=0})$, plugging in $f_{DP}^{(k,0)}$, we need to estimate
\begin{align}
\bbn{S_{\hbar}^{(1,k)}\int_{0}^{t_{k}}U_{\hbar}^{(k)}(t_{k}-t_{k+1})V_{N,\hbar}^{(k)}
\ga_{N,\hbar}^{(k)}(t_{k+1})dt_{k+1}}_{L_{t_{k}}^{\wq}[0,T]L_{x,x'}^{2}}
\end{align}
By $(\ref{equ:collapsing estimate for dp,last step,scaling,j=0})$ in Lemma \ref{lemma:collapsing estimate for dp,last step,scaling},
\begin{align*}
\leq N^{\frac{5}{2}\be-1}\hbar\lrs{C_{V}\hbar^{-\al}T^{1/2}}k^{2}\n{S_{\hbar}^{(1,k)}
\ga_{N,\hbar}^{(k)}(t_{k+1})}_{L_{t_{k+1}}^{\infty}L_{x,x'}^{2}}.
\end{align*}
Using the $N$-body energy bound $(\ref{equ:n-body energy bound})$ and discarding the unimportant factor\footnote{Keeping this $\hbar$
does not give much better estimate in the end. In fact, as we
will see, $\hbar^{-\al}$ accumulates but this $\hbar$ stays as just one
factor.} $\hbar$, we arrive at
\begin{align}
&\bbn{S_{\hbar}^{(1,k)}\int_{0}^{t_{k}}U_{\hbar}^{(k)}(t_{k}-t_{k+1})V_{N,\hbar}^{(k)}
\ga_{N,\hbar}^{(k)}(t_{k+1})dt_{k+1}}_{L_{t_{k}}^{\wq}[0,T]L_{x,x'}^{2}}\\
\leq& N^{\frac{5}{2}\be-1}(C_{V}\hbar^{-\al}T^{1/2})k^{2}(2E_{0,\hbar})^{k},\notag
\end{align}
which completes the proof of $(\ref{equ:collapsing estimate for dp,last step,j=0})$.

For $(\ref{equ:collapsing estimate for dp,last step,j>0})$, we insert $f_{DP}^{(k,j)}$ defined in $(\ref{equ:dp for k,j})$ to otain
\begin{align}
&\int_{[0,T]}
\bbn{
S_{\hbar}^{(1,k+j-1)}B_{N,\hbar,1,k+j}(f_{DP}^{(k,j)}(t_{k+j}))
}_{L_{x,x'}^{2}}dt_{k+j}\\
=&\bbn{S_{\hbar}^{(1,k+j-1)}B_{N,\hbar,1,k+j}\int_{0}^{t_{k+j}}
U_{\hbar}^{(k+j)}(t_{k+j}-t_{k+j+1})V_{N,\hbar}^{(k+j)}
\ga_{N,\hbar}^{(k+j)}(t_{k+j+1})dt_{k+j+1}}_{L_{t_{k+j}}^{1}[0,T]L_{x,x'}^{2}}\notag
\end{align}
Utilizing $(\ref{equ:collapsing estimate for dp,last step,scaling,j>=0})$ in Lemma \ref{lemma:collapsing estimate for dp,last step,scaling},
\begin{align*}
\leq N^{\frac{5}{2}\be-1}\hbar(C_{V}\hbar^{-\al}T^{1/2})^{2}(k+j)^{2}\bbn{S_{\hbar}^{(1,k+j)}
\ga_{N,\hbar}^{(k+j)}(t_{k+j+1})}_{L_{t_{k+j+1}}^{\infty}L_{x,x'}^{2}}
\end{align*}
Making use of the $N$-body energy bound $(\ref{equ:n-body energy bound})$ and discarding the unimportant small factor $\hbar$, $(\ref{equ:collapsing estimate for dp,last step,j>0})$ is then proved.

For the error part $(\ref{equ:collapsing estimate for ep,last step,j>0})$, inserting $f_{EP}^{(k,j)}$ we have
\begin{align}
&\int_{[0,T]}
\bbn{
S_{\hbar}^{(1,k+j-1)}B_{N,\hbar,1,k+j}(f_{EP}^{(k,j)}(t_{k+j}))
}_{L_{x,x'}^{2}}dt_{k+j}\\
=&\frac{k+j-1}{N}
\int_{[0,T]}
\bbn{
S_{\hbar}^{(1,k+j-1)}B_{N,\hbar,1,k+j}(\ga_{N,\hbar}^{(k+j)}(t_{k+j}))
}_{L_{x,x'}^{2}}dt_{k+j}\notag
\end{align}
By $(\ref{equ:collapsing estimate for ep,ip,last step})$ in Lemma \ref{lemma:collapsing estimate for ep,ip,last step} and the $N$-body energy bound $(\ref{equ:n-body energy bound})$,
\begin{align*}
\leq &N^{\frac{5}{2}\be-1}\hbar^{2}T^{1/2}\lrs{C_{V}\hbar^{-\al}T^{1/2}}(k+j)\lrs{2E_{0,\hbar}}^{k+j}.\notag
\end{align*}
Discarding the unimportant small factor $\hbar^{2}T^{1/2}$, we complete the proof of $(\ref{equ:collapsing estimate for ep,last step,j>0})$.

For the interaction part $(\ref{equ:collapsing estimate for ip,last step,j>0})$, inserting $f_{IP}^{(k,j)}$ we have
\begin{align}
&\int_{[0,T]}
\bbn{
S_{\hbar}^{(1,k+j-1)}B_{N,\hbar,1,k+j}(f_{IP}^{(k,j)}(t_{k+j}))
}_{L_{x,x'}^{2}}dt_{k+j}\\
\leq &
\int_{[0,T]}
\bbn{
S_{\hbar}^{(1,k+j-1)}B_{N,\hbar,1,k+j}(\ga_{N,\hbar}^{(k+j)}(t_{k+j}))
}_{L_{x,x'}^{2}}dt_{k+j}\notag\\
&+\int_{[0,T]}
\bbn{
S_{\hbar}^{(1,k+j-1)}B_{N,\hbar,1,k+j}(\ga_{H,\hbar}^{(k+j)}(t_{k+j}))
}_{L_{x,x'}^{2}}dt_{k+j}.\notag
\end{align}
By $(\ref{equ:collapsing estimate for ep,ip,last step})$ in Lemma \ref{lemma:collapsing estimate for ep,ip,last step} and the $N$-body energy bound $(\ref{equ:n-body energy bound})$,
\begin{align*}
\leq &N^{\frac{5}{2}\be}\hbar^{2}T^{1/2}\lrs{C_{V}\hbar^{-\al}T^{1/2}}\lrs{4E_{0,\hbar}}^{k+j}.\notag
\end{align*}
By discarding the unimportant small factor $\hbar^{2}T^{1/2}$, we complete the proof of $(\ref{equ:collapsing estimate for ip,last step,j>0})$.
\end{proof}

\subsection{A Klainerman-Machedon Bound 1st}\label{section:A Klainerman-Machedon Bound 1st}
Via the preliminary estimates in Section \ref{section:A Tool Box of Space-time Estimates}, we are able to provide a ``preliminary" Klainerman-Machedon bound for $w_{N,\hbar}^{(k)}$. Here,
``preliminary" certainly means, ``not final" as we will improve it once we have used it
to prove $(\ref{equ:decay estimate, uniform in h,bbgky-hartree})$.
\begin{lemma}\label{lemma:bound estimate for ip}
Let $t_{0}\in[0,\infty)$, $T\leq \frac{\hbar^{2\al}}{\lrs{64E_{0,\hbar}C_{V}e}^{2}}$, and
$\al=d+\frac{1}{2}$. For $k\leq (\ln N)^{10}$, we have
\begin{align}\label{equ:KM bound estimate for ip}
\int_{[t_{0},t_{0}+T]}
\bbn{
S_{\hbar}^{(1,k)}B_{N,\hbar,1,k+1}w_{N,\hbar}^{(k+1)}(t_{k+1})
}_{L_{x,x'}^{2}}dt_{k+1}\leq (16E_{0,\hbar})^{k}.
\end{align}
It holds for sufficiently small $T$ but independent of the initial time.
\end{lemma}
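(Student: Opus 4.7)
My plan is to expand $w_{N,\hbar}^{(k+1)}$ via the four-part decomposition $(\ref{equ:the difference hierarchy, decomposition})$ at a coupling level $l_{c}$ of order $(\ln N)^{10}$, and then bound each piece by combining the iteration estimate (Lemma \ref{lemma:iteration estimate}) with the crude single-step estimates (Lemma \ref{lemma:collapsing estimate for dp,ep,ip,last step}). Since both the hierarchy structure and the $N$-body energy bound $(\ref{equ:n-body energy bound})$ are time-translation invariant, the shift by $t_{0}$ is harmless: one applies the decomposition on the interval $[t_{0},t_{0}+T]$ with $w_{N,\hbar}^{(k+j+1)}(t_{0})$ playing the role of initial data, which is controlled by $(\ref{equ:n-body energy bound})$ (or by $(\ref{equ:initial condition, factorized})$ if $t_{0}=0$).

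Concretely, write $w_{N,\hbar}^{(k+1)}=FP^{(k+1,l_{c})}+DP^{(k+1,l_{c})}+EP^{(k+1,l_{c})}+IP^{(k+1,l_{c})}$. The choice $l_{c}\sim (\ln N)^{10}$, together with $k\leq (\ln N)^{10}$, guarantees $k+l_{c}+2\leq (\ln N)^{100}$, so every marginal appearing in the decomposition is within the range of $(\ref{equ:n-body energy bound})$. For each summand at level $j$, the $L^{1}_{t}$ iteration estimate $(\ref{equ:iteration estimate, time L1})$ from Lemma \ref{lemma:iteration estimate} peels off the outer $B_{N,\hbar,1,k+1}$ and bounds
\begin{equation*}
\int_{t_{0}}^{t_{0}+T}\bigl\|S_{\hbar}^{(1,k)}B_{N,\hbar,1,k+1}\,(\text{summand at level }j)\bigr\|_{L_{x,x'}^{2}}\,dt_{k+1}
\end{equation*}
by $2^{k+1}4^{j}(C_{V}\hbar^{-\al}T^{1/2})^{j}$ times $\int\|S_{\hbar}^{(1,k+j)}B_{N,\hbar,1,k+j+1}f^{(k+j+1)}\|_{L^{2}}\,dt$. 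The $DP$, $EP$, $IP$ inner integrals are controlled directly by $(\ref{equ:collapsing estimate for dp,last step,j>0})$, $(\ref{equ:collapsing estimate for ep,last step,j>0})$, $(\ref{equ:collapsing estimate for ip,last step,j>0})$. For the $FP$ inner integral, $f_{FP}^{(k+1,j)}=U_{\hbar}^{(k+j+1)}w_{N,\hbar}^{(k+j+1)}(t_{0})$, so one more collapsing step (as in the proof of Lemma \ref{lemma:iteration estimate}) reduces it to $C_{V}\hbar^{-\al}T^{1/2}\|S_{\hbar}^{(1,k+j+1)}w_{N,\hbar}^{(k+j+1)}(t_{0})\|_{L^{2}}$, bounded by $(2E_{0,\hbar})^{k+j+1}$ via $(\ref{equ:n-body energy bound})$ applied to both $\ga_{N,\hbar}^{(k+j+1)}$ and $\ga_{H,\hbar}^{(k+j+1)}$.

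The restriction $T\leq \hbar^{2\al}/(64E_{0,\hbar}C_{V}e)^{2}$ forces $C_{V}\hbar^{-\al}T^{1/2}\leq 1/(64E_{0,\hbar}e)$. Thus the largest geometric base in $j$, arising in the $IP$ piece with coefficient $(4E_{0,\hbar})^{k+j}$, is $16E_{0,\hbar}\cdot C_{V}\hbar^{-\al}T^{1/2}\leq 1/(4e)$, yielding a convergent sum in $j$ with an $O(1)$ multiplier. The $DP$, $EP$ contributions additionally carry the small factor $N^{\frac{5}{2}\be-1}$ coming from $(\ref{equ:collapsing estimate for dp,last step,j>0})$--$(\ref{equ:collapsing estimate for ep,last step,j>0})$, and hence vanish as $N\to\infty$ (for $\be<2/5$). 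The $IP$ residual at $j=l_{c}+1$ carries $N^{\frac{5}{2}\be}$ but is multiplied by $(1/(4e))^{l_{c}+1}$; the polylogarithmic choice $l_{c}\sim (\ln N)^{10}$ kills any polynomial $N^{\frac{5}{2}\be}$ on the nose. Summing the four contributions and absorbing the combinatorial factor $2^{k+1}$ and the worst energy base $4E_{0,\hbar}$ into $(16E_{0,\hbar})^{k}$ (using $16\geq 2\cdot 4\cdot 2$) delivers the claim.

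The main technical obstacle is the precise bookkeeping: verifying that the three distinct energy bases ($E_{0,\hbar}$ for $FP$, $2E_{0,\hbar}$ for $DP/EP$, $4E_{0,\hbar}$ for $IP$), multiplied by the combinatorial factor $2^{k+1}4^{j}$ and the geometric weight $(C_{V}\hbar^{-\al}T^{1/2})^{j}$, combine (after summation in $j$ and across the four pieces) to at most $(16E_{0,\hbar})^{k}$. The exact constant $64$ in the $T$ threshold is engineered to make this hold with the slack needed to absorb the $O(1)$ geometric-sum multipliers and the polynomial $(k+j)^{2}$ factors; no further ingredient beyond Lemmas \ref{lemma:iteration estimate}--\ref{lemma:collapsing estimate for dp,ep,ip,last step} and the $T$ restriction is required.
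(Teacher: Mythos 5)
Your proposal is correct and follows essentially the same route as the paper: decompose $w_{N,\hbar}^{(k+1)}$ into $FP,DP,EP,IP$ at a polylogarithmic coupling level, bound each piece via $(\ref{equ:iteration estimate, time L1})$ together with the single-step collapsing estimates of Lemma \ref{lemma:collapsing estimate for dp,ep,ip,last step}, sum the geometric series using the $T$-threshold, and invoke the time-translation invariance of $(\ref{equ:n-body energy bound})$ to shift $t_0$. The only cosmetic deviation is your choice $l_c\sim(\ln N)^{10}$ where the paper takes $l_c+1=\ln N$; both kill the $N^{\frac{5}{2}\beta}$ factor in the $IP$ residual, though your larger $l_c$ is at the edge of the $j\leq(\ln N)^{10}$ hypothesis of Lemma \ref{lemma:collapsing estimate for dp,ep,ip,last step}, so one would slightly tighten it to, say, $l_c=(\ln N)^{10}-2$ to apply that lemma verbatim.
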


\begin{proof}
We give a proof following the method in \cite{Che13,CH16correlation,CH16on} which was inspired by \cite{CP14}. We might as well take $t_{0}=0$ for convenience, as the general case also holds from time translation.
 Decompose $w_{N,\hbar}^{(k)}$ as in $(\ref{equ:the difference hierarchy, decomposition})$, it suffices to prove that
\begin{align}
&\int_{[0,T]}
\bbn{
S_{\hbar}^{(1,k)}B_{N,\hbar,1,k+1}FP^{(k+1,l_{c})}(t_{k+1})
}_{L_{x,x'}^{2}}dt_{k+1}\leq (8E_{0,\hbar})^{k},\label{equ:KM bound estimate,fp}\\
&\int_{[0,T]}
\bbn{
S_{\hbar}^{(1,k)}B_{N,\hbar,1,k+1}DP^{(k+1,l_{c})}(t_{k+1})
}_{L_{x,x'}^{2}}dt_{k+1}\leq (8E_{0,\hbar})^{k},\label{equ:KM bound estimate,dp}\\
&\int_{[0,T]}
\bbn{
S_{\hbar}^{(1,k)}B_{N,\hbar,1,k+1}EP^{(k+1,l_{c})}(t_{k+1})
}_{L_{x,x'}^{2}}dt_{k+1}\leq (8E_{0,\hbar})^{k},\label{equ:KM bound estimate,ep}\\
&\int_{[0,T]}
\bbn{
S_{\hbar}^{(1,k)}B_{N,\hbar,1,k+1}IP^{(k+1,l_{c})}(t_{k+1})
}_{L_{x,x'}^{2}}dt_{k+1}\leq (8E_{0,\hbar})^{k}.\label{equ:KM bound estimate,ip}
\end{align}

For the FP part $(\ref{equ:KM bound estimate,fp})$, we start by using estimate $(\ref{equ:iteration estimate, time L1})$ in Lemma \ref{lemma:iteration estimate} to obtain
\begin{align*}
&\int_{[0,T]}
\bbn{
S_{\hbar}^{(1,k)}B_{N,\hbar,1,k+1}FP^{(k+1,l_{c})}(t_{k+1})
}_{L_{x,x'}^{2}}dt_{k+1}\\
\leq & \int_{[0,T]}\n{S_{\hbar}^{(1,k)}B_{N,\hbar,1,k+1}f_{FP}^{(k+1,0)}(t_{k+1})}_{L_{x,x'}^{2}}dt_{k+1}\\
&+\sum_{j=1}^{l_{c}}2^{k+1}4^{j}\lrs{C_{V}\hbar^{-\al}T^{1/2}}^{j}\int_{[0,T]}
\bbn{
S_{\hbar}^{(1,k+j)}B_{N,\hbar,1,k+j+1}(f_{FP}^{(k+1,j)}(t_{k+j+1}))
}_{L_{x,x'}^{2}}dt_{k+j+1}
\end{align*}
Plugging in $f_{FP}^{(k+j)}$, applying Cauchy-Schwarz at $dt_{k+j+1}$ and then Lemma \ref{lemma:collapsing estimate d=3, propogator version},
\begin{align*}
\leq 2^{k+1}\sum_{j=0}^{l_{c}}\lrs{4C_{V}\hbar^{-\al}T^{1/2}}^{j+1}
\n{S_{\hbar}^{(1,k+j+1)}w_{N,\hbar}^{(k+j+1)}(0)}_{L_{x,x'}^{2}}
\end{align*}
We have required that $l_{c}\leq \ln N$ thus we can use the $N$-body energy bound $(\ref{equ:n-body energy bound})$ to obtain
\begin{align*}
\leq (8E_{0,\hbar})^{k}\sum_{j=0}^{l_{c}}\lrs{16E_{0,\hbar}C_{V}\hbar^{-\al}T^{1/2}}^{j+1}\leq (8E_{0,\hbar})^{k}
\end{align*}
if we plug in $T\leq \frac{\hbar^{2\al}}{\lrs{64E_{0,\hbar}C_{V}e}^{2}}$.

For the DP Part $(\ref{equ:KM bound estimate,dp})$, the above process gives
\begin{align*}
&\int_{[0,T]}\bbn{
S_{\hbar}^{(1,k)}B_{N,\hbar,1,k+1}DP^{(k+1,l_{c})}(t_{k+1})
}_{L_{x,x'}^{2}}dt_{k+1}\\
\leq& \int_{[0,T]}\bbn{
S_{\hbar}^{(1,k)}B_{N,\hbar,1,k+1}f_{DP}^{(k+1,0)}(t_{k+1})
}_{L_{x,x'}^{2}}dt_{k+1}\\
&+2^{k+1}\sum_{j=1}^{l_{c}}4^{j} (C_{V}\hbar^{-\al}T^{1/2})^{j}
\int_{[0,T]}
\bbn{
S_{\hbar}^{(1,k+j)}B_{N,\hbar,1,k+j+1}(f_{DP}^{(k+j+1)}(t_{k+j+1}))
}_{L_{x,x'}^{2}}dt_{k+j+1}
\end{align*}
As $k\leq (\ln N)^{10}$ and $j\leq l_{c}\leq \ln N$, we can use estimate $(\ref{equ:collapsing estimate for dp,last step,j>0})$ in Lemma \ref{lemma:collapsing estimate for dp,ep,ip,last step} to get
\begin{align*}
\leq &N^{\frac{5}{2}\be-1}2^{k+1}\sum_{j=0}^{l_{c}}\lrs{4C_{V}\hbar^{-\al}T^{1/2}}^{j+2}
(k+j+1)^{2}\lrs{2E_{0,\hbar}}^{k+j+1}\\
 \leq& N^{\frac{5}{2}\be-1}(8E_{0,\hbar})^{k}
 \sum_{j=0}^{l_{c}}(16E_{0,\hbar}C_{V}\hbar^{-\al}T^{1/2})^{j+2}\\
 \leq& (8E_{0,\hbar})^{k}
\end{align*}
if we plug in $T\leq \frac{\hbar^{2\al}}{\lrs{64E_{0,\hbar}C_{V}e}^{2}}$.

Similarly, for the error part $(\ref{equ:KM bound estimate,ep})$, we have
\begin{align*}
&\int_{[0,T]}\bbn{
S_{\hbar}^{(1,k)}B_{N,\hbar,1,k+1}EP^{(k+1,l_{c})}(t_{k+1})
}_{L_{x,x'}^{2}}dt_{k+1}\\
\leq&2^{k+1}\sum_{j=1}^{l_{c}+1}4^{j}(C_{V}\hbar^{-\al}T^{1/2})^{j}
\int_{[0,T]}
\bbn{
S_{\hbar}^{(1,k+j)}B_{N,\hbar,1,k+j+1}(f_{EP}^{(k+j+1)}(t_{k+j+1}))
}_{L_{x,x'}^{2}}dt_{k+j+1}
\end{align*}
Plugging in $f_{EP}^{(k,j)}$ and using estimate $(\ref{equ:collapsing estimate for ep,last step,j>0})$ in Lemma \ref{lemma:collapsing estimate for dp,ep,ip,last step},
\begin{align*}
\leq& N^{\frac{5}{2}\be-1}2^{k+1}\sum_{j=1}^{l_{c}+1}\lrs{4C_{V}\hbar^{-\al}T^{1/2}}^{j+1}(k+j+1)\lrs{2E_{0,\hbar}}^{k+j+1}\\
\leq& N^{\frac{5}{2}\be-1}(8E_{0,\hbar})^{k}\sum_{j=1}^{l_{c}+1}
\lrs{16E_{0,\hbar}C_{V}\hbar^{-\al}T^{1/2}}^{j+1}\\
\leq& (8E_{0,\hbar})^{k}
\end{align*}
if we plug in $T\leq \frac{\hbar^{2\al}}{\lrs{64E_{0,\hbar}C_{V}e}^{2}}$.

Finally, for the interaction part $(\ref{equ:KM bound estimate,ip})$, we have
\begin{align*}
&\int_{[0,T]}\bbn{
S_{\hbar}^{(1,k)}B_{N,\hbar,1,k+1}IP^{(k+1,l_{c})}(t_{k+1})
}_{L_{x,x'}^{2}}dt_{k+1}\\
\leq&2^{k+1}4^{l_{c}+1} (C_{V}\hbar^{-\al}T^{1/2})^{l_{c}+1}
\int_{[0,T]}
\bbn{
S_{\hbar}^{(1,k+l_{c}+1)}B_{N,\hbar,1,k+l_{c}+2}w_{N,\hbar}^{(k+l_{c}+2)}(t_{k+l_{c}+2})
}_{L_{x,x'}^{2}}dt_{k+l_{c}+2}
\end{align*}
By estimate $(\ref{equ:collapsing estimate for ip,last step,j>0})$ in Lemma \ref{lemma:collapsing estimate for dp,ep,ip,last step},
\begin{align*}
\leq&N^{\frac{5}{2}\be}2^{k+1} (4C_{V}\hbar^{-\al}T^{1/2})^{l_{c}+2}(4E_{0,\hbar})^{k+l_{c}+2}
\end{align*}
Plugging in $T\leq \frac{\hbar^{2\al}}{\lrs{64E_{0,\hbar}C_{V}e}^{2}}$ and taking $l_{c}+1=\ln N$,
\begin{align*}
\leq& 2N^{\frac{5}{2}\be-1}(8E_{0,\hbar})^{k}
\end{align*}
and we have completed the proof of Lemma \ref{lemma:bound estimate for ip}.

\end{proof}

\subsection{Feeding the Strichartz Bound into the $H^1$ Estimate}\label{section:Feeding the Strichartz Bound into the $H^1$ Estimate}
In the section, we first provide estimates for the four parts in the expansion of
$w_{N,\hbar}^{(k)}$ via the preliminary crude estimates established in Section \ref{section:A Tool Box of Space-time Estimates}. Then with the help of the KM bound we prove in
Section \ref{section:A Klainerman-Machedon Bound 1st}, we can establish a strong stepping estimate for $w_{N,\hbar}^{(k)}$ which is Proposition $\ref{lemma:decay estimate for fp,dp,ip}$.

\begin{lemma}\label{lemma:estimate for fp,dp,ip}
Let $\al=d+1/2$.
 For $k\leq (\ln N)^{2}$ and $l_{c}\leq \ln N$, we have the following estimates.

For the free part,
\begin{align} \label{equ:estimate for fp}
\sup_{t_{k}\in[t_{0},t_{0}+T]}\n{S_{\hbar}^{(1,k)}FP^{(k,l_{c})}(t_{k})}_{L_{x,x'}^{2}}
\leq 2^{k}\sum_{j=0}^{l_{c}}\lrs{4C_{V}\hbar^{-\al}T^{1/2}}^{j}
\n{S_{\hbar}^{(1,k+j)}w_{N,\hbar}^{(k+j)}(t_{0})}_{L_{x,x'}^{2}}
\end{align}

For the driving part,
\begin{align}\label{equ:estimate for dp}
\sup_{t_{k}\in[t_{0},t_{0}+T]}\n{S_{\hbar}^{(1,k)}DP^{(k,l_{c})}(t_{k})}_{L_{x,x'}^{2}}
\leq& \lrs{8E_{0,\hbar}}^{k}N^{\frac{5}{2}\be-1}\sum_{j=0}^{l_{c}}\lrs{16E_{0,\hbar}C_{V}\hbar^{-\al}T^{1/2}}^{j+1}
\end{align}

For the error part,
\begin{align}\label{equ:estimate for ep}
\sup_{t_{k}\in[t_{0},t_{0}+T]}\n{S_{\hbar}^{(1,k)}EP^{(k,l_{c})}(t_{k})}_{L_{x,x'}^{2}}
\leq& \lrs{8E_{0,\hbar}}^{k}N^{\frac{5}{2}\be-1}\sum_{j=0}^{l_{c}}\lrs{16E_{0,\hbar}C_{V}\hbar^{-\al}T^{1/2}}^{j+1}
\end{align}

For the interaction part,
\begin{align}\label{equ:estimate for ip}
&\sup_{t_{k}\in[t_{0},t_{0}+T]}\n{S_{\hbar}^{(1,k)}IP^{(k,l_{c})}(t_{k})}_{L_{x,x'}^{2}}\\
\leq& 2^{k}4^{l_{c}+1}\lrs{C_{V}\hbar^{-\al}T^{1/2}}^{l_{c}}\int_{[t_{0},t_{0}+T]}
\bbn{
S_{\hbar}^{(1,k+l_{c})}B_{N,\hbar,1,k+l_{c}+1}w_{N,\hbar}^{(k+l_{c}+1)}(t_{k+l_{c}+1})
}_{L_{x,x'}^{2}}dt_{k+l_{c}+1}.\notag
\end{align}
\end{lemma}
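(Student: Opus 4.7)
The plan is to treat each of the four parts separately, applying the time-$L^{\infty}$ iteration estimate \eqref{equ:iteration estimate, time L infty} of Lemma \ref{lemma:iteration estimate} to each coupling index $j$, then terminating with the appropriate preliminary bound from Lemma \ref{lemma:collapsing estimate for dp,ep,ip,last step}. Since all hierarchies are autonomous, it suffices to prove the four claims with $t_{0}=0$; the general case follows by time translation.

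For the free part \eqref{equ:estimate for fp}, substitute $f^{(k+j)} = f_{FP}^{(k,j)} = U_{\hbar}^{(k+j)}(t_{k+j}) w_{N,\hbar}^{(k+j)}(0)$ from \eqref{equ:fp for k,j} into \eqref{equ:iteration estimate, time L infty}. This yields, for $j\ge 1$, the prefactor $2^{k}4^{j}(C_{V}\hbar^{-\alpha}T^{1/2})^{j-1}$ multiplying $\int_{0}^{T} \n{S_{\hbar}^{(1,k+j-1)} B_{N,\hbar,1,k+j} U_{\hbar}^{(k+j)}(t_{k+j}) w_{N,\hbar}^{(k+j)}(0)}_{L_{x,x'}^{2}}\,dt_{k+j}$. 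Cauchy--Schwarz in $t_{k+j}$ followed by the collapsing estimate of Lemma \ref{lemma:collapsing estimate d=3, propogator version} removes the propagator at the cost of one additional factor $C_{V}\hbar^{-\alpha}T^{1/2}$ and produces the initial-data norm $\n{S_{\hbar}^{(1,k+j)} w_{N,\hbar}^{(k+j)}(0)}_{L_{x,x'}^{2}}$. The $j=0$ contribution is trivial because $J_{N,\hbar}^{(k,0)}$ is the identity and $U_{\hbar}^{(k)}$ is an isometry on this norm. Summing over $j=0,\dots,l_{c}$ yields \eqref{equ:estimate for fp}.

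For the driving and error parts \eqref{equ:estimate for dp} and \eqref{equ:estimate for ep} the same scheme applies, except that after one application of \eqref{equ:iteration estimate, time L infty} the remaining $dt_{k+j}$-integral is read off directly from \eqref{equ:collapsing estimate for dp,last step,j=0}, \eqref{equ:collapsing estimate for dp,last step,j>0}, and \eqref{equ:collapsing estimate for ep,last step,j>0}. The interaction part \eqref{equ:estimate for ip} is the easiest: apply \eqref{equ:iteration estimate, time L infty} once with $j$ replaced by $l_{c}+1$ and $f^{(k+l_{c}+1)} = w_{N,\hbar}^{(k+l_{c}+1)}(t_{k+l_{c}+1})$, and stop, leaving the resulting space-time integral of $S_{\hbar}^{(1,k+l_{c})} B_{N,\hbar,1,k+l_{c}+1} w_{N,\hbar}^{(k+l_{c}+1)}$ unestimated; that quantity is precisely the object bounded by the preliminary Klainerman--Machedon estimate of Lemma \ref{lemma:bound estimate for ip} that will be fed back in Section \ref{section:Convergence Rate for Every Finite Time}.

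The only bookkeeping required is to absorb the polynomial-in-$(k+j)$ factors produced by the preliminary DP and EP bounds --- namely $(k+j)^{2}$ from \eqref{equ:collapsing estimate for dp,last step,j>0} and $(k+j)$ from \eqref{equ:collapsing estimate for ep,last step,j>0} --- into the geometric tail $(16 E_{0,\hbar}C_{V}\hbar^{-\alpha}T^{1/2})^{j+1}$ on the right-hand side of the stated bounds. Since $k\le (\ln N)^{2}$ and $j\le l_{c}\le \ln N$, both indices are sub-polylogarithmic in $N$, while the gap between our raw prefactor $2^{k}4^{j}(2E_{0,\hbar})^{k+j}$ and the target $(8E_{0,\hbar})^{k}(16 E_{0,\hbar})^{j+1}$ leaves a multiplicative reserve of order $2^{k+j+4}E_{0,\hbar}$, which (since $E_{0,\hbar}\ge 1$) dominates any polynomial in $k+j$. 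This comparison is the one place where the hypothesis $k\le (\ln N)^{2}$ is actually used; no further analytic ingredient is needed beyond the two lemmas cited.
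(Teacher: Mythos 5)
Your proposal follows essentially the same route as the paper: apply the time-$L^{\infty}$ iteration estimate \eqref{equ:iteration estimate, time L infty} coupling by coupling, terminate with the collapsing estimate of Lemma \ref{lemma:collapsing estimate d=3, propogator version} for FP, with Lemma \ref{lemma:collapsing estimate for dp,ep,ip,last step} for DP and EP, and stop one step early for IP, and this is correct. One small point of bookkeeping: the multiplicative reserve in the DP/EP absorption is $E_{0,\hbar}\,2^{k+j+2}/(k+j)^{2}$ rather than $2^{k+j+4}E_{0,\hbar}$ (you appear to have dropped one power of $4$ from the raw prefactor $4^{j+1}$), though the conclusion still goes through since $2^{n+2}\ge n^{2}$ for all $n\ge 0$. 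A slightly more substantive slip: the absorption inequality $2^{k+j+2}\ge (k+j)^{2}$ holds unconditionally and does not use $k\le(\ln N)^{2}$; the role of that hypothesis (together with $l_{c}\le\ln N$) is rather to guarantee $k+j\le(\ln N)^{10}$, which is the standing restriction under which Lemma \ref{lemma:collapsing estimate for dp,ep,ip,last step} (and, downstream, the $N$-body energy bound \eqref{equ:n-body energy bound} for the $(k+j)$-th marginal) is valid.
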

\begin{proof}
For convenience, we might as well take $t_{0}=0$ as the proof works the same for general case by time translation.

For the free part, applying estimate $(\ref{equ:iteration estimate, time L infty})$ in Lemma \ref{lemma:iteration estimate}, we arrive at
\begin{align*}
&\n{S_{\hbar}^{(1,k)}FP^{(k,l_{c})}}_{L_{t_{k}}^{\wq}[0,T]L_{x,x'}^{2}}\\
\leq & \n{S_{\hbar}^{(1,k)}f_{FP}^{(k,0)}(t_{k})}_{L_{t_{k}}^{\wq}[0,T]L_{x,x'}^{2}}\\
&+\sum_{j=1}^{l_{c}}2^{k}4^{j}\lrs{C_{V}\hbar^{-\al}T^{1/2}}^{j-1}\int_{[0,T]}
\bbn{
S_{\hbar}^{(1,k+j-1)}B_{N,\hbar,1,k+j}(f_{FP}^{(k+j)}(t_{k+j}))
}_{L_{x,x'}^{2}}dt_{k+j}
\end{align*}
Plugging in $f_{FP}^{(k+j)}$ and applying Cauchy-Schwarz at $dt_{k+j}$,
\begin{align*}
\leq & \n{S_{\hbar}^{(1,k)}U_{\hbar}^{(k)}(t_{k})w_{N,\hbar}^{(k)}(0)}_{L_{t_{k}}^{\wq}[0,T]L_{x,x'}^{2}}\\
&+\sum_{j=1}^{l_{c}}2^{k}4^{j}\lrs{C_{V}\hbar^{-\al}T^{1/2}}^{j-1}T^{1/2}\bbn{
S_{\hbar}^{(1,k+j-1)}B_{N,\hbar,1,k+j}U_{\hbar}^{(k+j)}(t_{k+j})w_{N,\hbar}^{(k+j)}(0)
}_{L_{t_{k+j}}^{2}[0,T]L_{x,x'}^{2}}
\end{align*}
Applying the KM collapsing estimate (Lemma \ref{lemma:collapsing estimate d=3, propogator version}) for $j\geq 1$,
\begin{align}
\leq& \sum_{j=0}^{l_{c}}2^{k}\lrs{4C_{V}\hbar^{-\al}T^{1/2}}^{j}\bbn{
S_{\hbar}^{(1,k+j)}w_{N,\hbar}^{(k+j)}(0)
}_{L_{x,x'}^{2}}.
\end{align}
We have $(\ref{equ:estimate for fp})$ as claimed.

For the driving part, the same process yields
\begin{align*}
&\n{S_{\hbar}^{(1,k)}DP^{(k,l_{c})}}_{L_{t_{k}}^{\wq}[0,T]L_{x,x'}^{2}}\\
\leq& \n{S_{\hbar}^{(1,k)}f_{DP}^{(k,0)}
(t_{k})}_{L_{t_{k}}^{\wq}[0,T]L_{x,x'}^{2}}\\
&+ 2^{k}\sum_{j=1}^{l_{c}}
4^{j}\lrs{C_{V}\hbar^{-\al}T^{1/2}}^{j-1}
\int_{[0,T]}
\bbn{
S_{\hbar}^{(1,k+j-1)}B_{N,\hbar,1,k+j}(f_{DP}^{(k,j)}(t_{k+j}))
}_{L_{x,x'}^{2}}dt_{k+j}
\end{align*}
Plugging in $f_{DP}^{(k,j)}$ and using estimates $(\ref{equ:collapsing estimate for dp,last step,j=0})$ and
$(\ref{equ:collapsing estimate for dp,last step,j>0})$ gives
\begin{align*}
\leq &N^{\frac{5}{2}\be-1}2^{k}\sum_{j=0}^{l_{c}}(k+j)^{2}\lrs{4C_{V}\hbar^{-\al}T^{1/2}}^{j+1}(2E_{0,\hbar})^{k+j}\\
\leq &N^{\frac{5}{2}\be-1}(8E_{0,\hbar})^{k}\sum_{j=0}^{l_{c}}\lrs{16E_{0,\hbar}C_{V}\hbar^{-\al}T^{1/2}}^{j+1}
\end{align*}
which completes the proof for the driving part.

For the error part, it reads
\begin{align*}
&\n{S_{\hbar}^{(1,k)}EP^{(k,l_{c})}}_{L_{t_{k}}^{\wq}[0,T]L_{x,x'}^{2}}\\
\leq&  2^{k}\sum_{j=1}^{l_{c}+1}
4^{j}\lrs{C_{V}\hbar^{-\al}T^{1/2}}^{j-1}
\int_{[0,T]}
\bbn{
S_{\hbar}^{(1,k+j-1)}B_{N,\hbar,1,k+j}(f_{EP}^{(k,j)}(t_{k+j}))
}_{L_{x,x'}^{2}}dt_{k+j}
\end{align*}
Plugging in $f_{EP}^{(k,j)}$ and using estimate $(\ref{equ:collapsing estimate for ep,last step,j>0})$ provides
\begin{align*}
\leq& N^{\frac{5}{2}\be-1}2^{k}\sum_{j=1}^{l_{c}+1}
(k+j)\lrs{4C_{V}\hbar^{-\al}T^{1/2}}^{j}(2E_{0,\hbar})^{k+j}\\
\leq& N^{\frac{5}{2}\be-1}(8E_{0,\hbar})^{k}\sum_{j=1}^{l_{c}+1}
\lrs{16E_{0,\hbar}C_{V}\hbar^{-\al}T^{1/2}}^{j}
\end{align*}
which completes the proof for the error part.

For the interaction part, we have similarly
\begin{align*}
&\n{S_{\hbar}^{(1,k)}IP^{(k,l_{c})}}_{L_{t_{k}}^{\wq}[0,T]L_{x,x'}^{2}}\\
=&
\bbn{\int_{0}^{t_{k}}\ccc \int_{0}^{t_{k+l_{c}}}S_{\hbar}^{(1,k)}J_{N,\hbar}^{(k,l_{c}+1)}(t_{k},\underline{t}_{(k,l_{c}+1)})
\lrs{w_{N,\hbar}^{(k+l_{c}+1)}(t_{k+l_{c}+1})}d\underline{t}_{(k,l_{c}+1)}}_{L_{t_{k}}^{\wq}[0,T]L_{x,x'}^{2}}\\
\leq& 2^{k}4^{l_{c}+1}\lrs{C_{V}\hbar^{-\al}T^{1/2}}^{l_{c}}\int_{[0,T]}
\bbn{
S_{\hbar}^{(1,k+l_{c})}B_{N,\hbar,1,k+l_{c}+1}w_{N,\hbar}^{(k+l_{c}+1)}(t_{k+l_{c}+1})
}_{L_{x,x'}^{2}}dt_{k+l_{c}+1}
\end{align*}
which is $(\ref{equ:estimate for ip})$.
\end{proof}
Notice that, we are not using the crude estimates in Lemma $\ref{lemma:collapsing estimate for dp,ep,ip,last step}$ for $(\ref{equ:estimate for ip})$. We will use the KM bound we refined in Lemma $\ref{lemma:bound estimate for ip}$ to strengthen our estimate in Proposition $\ref{lemma:decay estimate for fp,dp,ip}$. Before we start, we recall that $(\ref{equ:the difference hierarchy, decomposition})$ is true for
all $l_{c}\geq 1$, hence properties regarding $w_{N,\hbar}^{(k)}$ using $l_{c}$ equal to some number $A$ can be fed into the proof of another property of $w_{N,\hbar}^{(k)}$ using $l_{c}$ equal to some number $B$.

\begin{proposition}\label{lemma:decay estimate for fp,dp,ip}
Let $T \leq \frac{\hbar^{2\al}}{\lrs{64E_{0,\hbar}C_{V}e}^{2}}$ and $\al=d+1/2$. For $k\leq (\ln N)^{2}$, $l_{c}\leq \ln N$, we have
\begin{align}\label{equ:decay estimate for fp,dp,ip}
&\sup_{t\in[t_{0},t_{0}+T]}\n{S_{\hbar}^{(1,k)}w_{N,\hbar}^{(k)}(t)}_{L_{x,x'}^{2}}\\
\leq &
2^{k}\sum_{j=0}^{l_{c}}(4C_{V}\hbar^{-\al}T^{1/2})^{j}
\n{S_{\hbar}^{(1,k+j)}w_{N,\hbar}^{(k+j)}(t_{0})}_{L_{x,x'}^{2}}+(C_{0,\hbar})^{k}N^{\frac{5}{2}\be-1}+(C_{0,\hbar})^{k}\lrs{\frac{1}{e}}^{l_{c}+1},\notag
\end{align}
and
\begin{align}\label{equ:decay estimate for collapsing part}
&\int_{[t_{0},t_{0}+T]}\n{S_{\hbar}^{(1,1)}B_{N,\hbar,1,2}^{\pm}w_{N,\hbar}^{(2)}(t)}_{L_{x,x'}^{2}}dt\\
\leq &4\sum_{j=0}^{l_{c}}\lrs{4C_{V}\hbar^{-\al}T^{1/2}}^{j+1}
\n{S_{\hbar}^{(1,2+j)}w_{N,\hbar}^{(2+j)}(t_{0})}_{L_{x,x'}^{2}}
+
(C_{0,\hbar})^{2}N^{\frac{5}{2}\be-1}+(C_{0,\hbar})^{2}\lrs{\frac{1}{e}}^{l_{c}+1},\notag
\end{align}
where $C_{0,\hbar}=64E_{0,\hbar}$. Notice that $(\ref{equ:decay estimate for collapsing part})$ is stronger than $(\ref{equ:KM bound estimate for ip})$.
\end{proposition}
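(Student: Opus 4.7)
The plan is to decompose $w_{N,\hbar}^{(k)}$ via the identity $(\ref{equ:the difference hierarchy, decomposition})$ with the prescribed coupling level $l_c$ and to bound each of the four pieces $FP$, $DP$, $EP$, $IP$ separately, feeding the freshly obtained Klainerman--Machedon bound of Lemma \ref{lemma:bound estimate for ip} into the last piece $IP$. Lemma \ref{lemma:estimate for fp,dp,ip} has already pre-packaged the estimates we need. The $FP$ bound $(\ref{equ:estimate for fp})$ is exactly the first sum on the right of $(\ref{equ:decay estimate for fp,dp,ip})$; the $DP$ and $EP$ bounds $(\ref{equ:estimate for dp})$, $(\ref{equ:estimate for ep})$ each carry the factor $N^{\frac{5}{2}\be-1}$ together with a geometric series in $16E_{0,\hbar}C_V\hbar^{-\al}T^{1/2}$, and the hypothesis $T\leq \hbar^{2\al}/(64E_{0,\hbar}C_V e)^2$ forces this ratio to be $\leq 1/(4e)<1$, so the series sums up and the resulting contribution is absorbed into $(C_{0,\hbar})^{k}N^{\frac{5}{2}\be-1}$ with $C_{0,\hbar}=64E_{0,\hbar}$.

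The key step is the $IP$ piece. Starting from $(\ref{equ:estimate for ip})$ and applying Lemma \ref{lemma:bound estimate for ip} at coupling level $k+l_c\leq (\ln N)^2+\ln N\leq (\ln N)^{10}$ (this is precisely why we proved the KM bound for such a generous range of indices), the deep integral $\int\|S_{\hbar}^{(1,k+l_c)}B_{N,\hbar,1,k+l_c+1}w_{N,\hbar}^{(k+l_c+1)}\|_{L^2}\,dt_{k+l_c+1}$ is dominated by $(16E_{0,\hbar})^{k+l_c}$. Collecting constants,
\[
\sup_{t\in[t_0,t_0+T]}\|S_{\hbar}^{(1,k)}IP^{(k,l_c)}(t)\|_{L^2}\leq 4(32E_{0,\hbar})^{k}\bigl(64E_{0,\hbar}C_V\hbar^{-\al}T^{1/2}\bigr)^{l_c},
\]
and the restriction on $T$ reduces $64E_{0,\hbar}C_V\hbar^{-\al}T^{1/2}\leq 1/e$, producing precisely the $(C_{0,\hbar})^{k}(1/e)^{l_c+1}$ term (up to adjusting $C_{0,\hbar}$ to swallow the leading $4$). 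Adding the four contributions yields $(\ref{equ:decay estimate for fp,dp,ip})$.

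The collapsing bound $(\ref{equ:decay estimate for collapsing part})$ is proved by the same scheme applied to $w_{N,\hbar}^{(2)}$ pre-composed with $S_{\hbar}^{(1,1)}B_{N,\hbar,1,2}^{\pm}$: one uses the $L_t^1$ iteration $(\ref{equ:iteration estimate, time L1})$ of Lemma \ref{lemma:iteration estimate} in place of $(\ref{equ:iteration estimate, time L infty})$, which contributes the extra factor $4C_V\hbar^{-\al}T^{1/2}$ producing the shift $j\mapsto j+1$ in the exponent of the first sum; the $DP$, $EP$ contributions are handled by the crude collapsing estimates $(\ref{equ:collapsing estimate for dp,last step,j>0})$, $(\ref{equ:collapsing estimate for ep,last step,j>0})$ of Lemma \ref{lemma:collapsing estimate for dp,ep,ip,last step}; and the final $IP$ contribution is once again tamed by invoking the KM bound of Lemma \ref{lemma:bound estimate for ip} at coupling level $2+l_c\leq (\ln N)^{10}$. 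The main technical subtlety, and the reason this feedback step is the heart of Section \ref{section:Feeding the Strichartz Bound into the $H^1$ Estimate}, is the bookkeeping of the two competing exponentials $(16E_{0,\hbar})^{k+l_c}$ and $(C_V\hbar^{-\al}T^{1/2})^{l_c}$: the smallness imposed on $T$ is engineered exactly so that their product stays uniformly controlled while $l_c$ can still be taken as large as $\ln N$ to obtain the crucial $(1/e)^{l_c+1}$ decay that will be converted into decay in $N$ downstream.
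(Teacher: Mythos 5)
Your proposal follows essentially the same route as the paper's own proof: you decompose $w_{N,\hbar}^{(k)}$ into $FP+DP+EP+IP$ at coupling level $l_c$, invoke the pre-packaged bounds of Lemma \ref{lemma:estimate for fp,dp,ip} (Lemma \ref{lemma:collapsing estimate for dp,ep,ip,last step} for the collapsing version), close the $IP$ term by feeding in the Klainerman--Machedon bound of Lemma \ref{lemma:bound estimate for ip}, and use the smallness $T\leq \hbar^{2\al}/(64E_{0,\hbar}C_V e)^2$ to make the ratio $16E_{0,\hbar}C_V\hbar^{-\al}T^{1/2}\leq 1/(4e)$ so that the geometric series sum and the $(1/e)^{l_c}$ decay both materialize --- exactly the paper's strategy, down to the bookkeeping $2^k 4^{l_c+1}(C_V\hbar^{-\al}T^{1/2})^{l_c}(16E_{0,\hbar})^{k+l_c}=4(32E_{0,\hbar})^k(64E_{0,\hbar}C_V\hbar^{-\al}T^{1/2})^{l_c}$. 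The only cosmetic looseness, which you flagged yourself, is absorbing the leading factor $4$ into $C_{0,\hbar}=64E_{0,\hbar}$ for small $k$; the paper silently performs the same adjustment, so this is not a gap.
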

\begin{proof}
The conclusion of Lemma $\ref{lemma:estimate for fp,dp,ip}$ reads
\begin{align}
&\sup_{t\in[t_{0},t_{0}+T]}\n{S_{\hbar}^{(1,k)}w_{N,\hbar}^{(k)}(t)}_{L_{x,x'}^{2}}\\
\leq &
2^{k}\sum_{j=0}^{l_{c}}(4C_{V}\hbar^{-\al}T^{1/2})^{j}
\n{S_{\hbar}^{(1,k+j)}w_{N,\hbar}^{(k+j)}(t_{0})}_{L_{x,x'}^{2}}+2N^{\frac{5}{2}\be-1}(8E_{0,\hbar})^{k}\sum_{j=0}^{l_{c}}
\lrs{C_{V}\hbar^{-\al}T^{1/2}}^{j}\notag\\
&+ 2^{k}4^{l_{c}+1}\lrs{C_{V}\hbar^{-\al}T^{1/2}}^{l_{c}}\int_{[t_{0},t_{0}+T]}
\bbn{
S_{\hbar}^{(1,k+l_{c})}B_{N,\hbar,1,k+l_{c}+1}w_{N,\hbar}^{(k+l_{c}+1)}(t_{k+l_{c}+1})
}_{L_{x,x'}^{2}}dt_{k+l_{c}+1}.\notag
\end{align}
Since $k+l_{c}\leq (\ln N)^{10}$ and $T\leq \frac{\hbar^{2\al}}{\lrs{64E_{0,\hbar}C_{V}e}^{2}}$, we can employ KM bound in Lemma \ref{lemma:bound estimate for ip} to get
\begin{align*}
\leq &
2^{k}\sum_{j=0}^{l_{c}}(4C_{V}\hbar^{-\al}T^{1/2})^{j}
\n{S_{\hbar}^{(1,k+j)}w_{N,\hbar}^{(k+j)}(t_{0})}_{L_{x,x'}^{2}}+2N^{\frac{5}{2}\be-1}(8E_{0,\hbar})^{k}\sum_{j=0}^{l_{c}}
2^{k}4^{l_{c}+1}\lrs{C_{V}\hbar^{-\al}T^{1/2}}^{l_{c}}\notag\\
&+ 2^{k}4^{l_{c}+1}\lrs{C_{V}\hbar^{-\al}T^{1/2}}^{l_{c}}
(16E_{0,\hbar})^{k+l_{c}}
\end{align*}
Plugging in $T\leq \frac{\hbar^{2\al}}{\lrs{64E_{0,\hbar}C_{V}e}^{2}}$ and $C_{0,\hbar}=64E_{0,\hbar}$, we obtain $(\ref{equ:decay estimate for fp,dp,ip})$.

For $(\ref{equ:decay estimate for collapsing part})$, repeating the proof of KM bound in Lemma \ref{lemma:bound estimate for ip}, we have
\begin{align}
&\int_{[t_{0},t_{0}+T]}\n{S_{\hbar}^{(1,1)}B_{N,\hbar,1,2}^{\pm}w_{N,\hbar}^{(2)}(t)}_{L_{x,x'}^{2}}dt\\
\leq&4\sum_{j=0}^{l_{c}}\lrs{4C_{V}\hbar^{-\al}T^{1/2}}^{j+1}
\n{S_{\hbar}^{(1,2+j)}w_{N,\hbar}^{(2+j)}(t_{0})}_{L_{x,x'}^{2}}+2N^{\frac{5}{2}\be-1}(8E_{0,\hbar})^{2}\notag\\
&+4^{l_{c}+2} (C_{V}\hbar^{-\al}T^{1/2})^{l_{c}+1}
\int_{[t_{0},t_{0}+T]}
\bbn{
S_{\hbar}^{(1,2+l_{c})}B_{N,\hbar,1,3+l_{c}}w_{N,\hbar}^{(3+l_{c})}(t_{3+l_{c}})
}_{L_{x,x'}^{2}}dt_{3+l_{c}}\notag
\end{align}
Since $2+l_{c}\leq (\ln N)^{10}$ and $T\leq \frac{\hbar^{2\al}}{\lrs{64E_{0,\hbar}C_{V}e}^{2}}$, we can employ KM bound in Lemma \ref{lemma:bound estimate for ip} to get
\begin{align*}
\leq &4\sum_{j=0}^{l_{c}}\lrs{4C_{V}\hbar^{-\al}T^{1/2}}^{j+1}
\n{S_{\hbar}^{(1,2+j)}w_{N,\hbar}^{(2+j)}(t_{0})}_{L_{x,x'}^{2}}+2N^{\frac{5}{2}\be-1}(8E_{0,\hbar})^{2}\notag\\
&+4^{l_{c}+2} (C_{V}\hbar^{-\al}T^{1/2})^{l_{c}+1}\lrs{16E_{0,\hbar}}^{3+l_{c}}
\end{align*}
Plugging in $T\leq \frac{\hbar^{2\al}}{\lrs{64E_{0,\hbar}C_{V}e}^{2}}$ and $C_{0,\hbar}=64E_{0,\hbar}$, we obtain $(\ref{equ:decay estimate for collapsing part})$.

\end{proof}
\subsection{Convergence Rate for Every Finite Time}\label{section:Convergence Rate for Every Finite Time}
In the section, we will iteratively use Proposition \ref{lemma:decay estimate for fp,dp,ip} to obtain the convergence rate for every finite time at the price of weakening the convergence rate.
\begin{proposition}\label{proposition:decay estimate, uniform in h, bbgky-hartree}
Let $T_{0}<+\infty$ and  $\al=d+1/2$. For $k\leq (\ln N)^{2}-(1-\frac{5}{2}\be)\sum_{j=0}^{n(T_{0},\hbar)}\frac{\ln N}{2^{j}j!}$, we have
\begin{align}\label{equ:decay estimate}
\sup_{t\in [0,T_{0}]}\n{S_{\hbar}^{(1,k)}w_{N,\hbar}^{(k)}(t)}_{L_{x,x'}^{2}}\leq (e^{n(T_{0},\hbar)}C_{0,\hbar})^{k}N^{\frac{\frac{5}{2}\be-1}{2^{n(T_{0},\hbar)}n(T_{0},\hbar)!}},
\end{align}
and
\begin{align}\label{equ:decay estimate,collapsing part}
\int_{[0,T_{0}]}\n{S_{\hbar}^{(1,1)}B_{N,\hbar,1,2}^{\pm}w_{N,\hbar}^{(2)}(t)}_{L_{x,x'}^{2}}dt\leq 8n(T_{0},\hbar)C_{0,\hbar}^{2}N^{\frac{\frac{5}{2}\be-1}{2^{n(T_{0},\hbar)}n(T_{0},\hbar)!}}
\end{align}
where $n(T_{0},\hbar)=(8eC_{V}C_{0,h})^{2}T_{0}/\hbar^{2\al}$ and $C_{0,\hbar}=64E_{0,\hbar}$ as defined in Proposition $\ref{lemma:decay estimate for fp,dp,ip}$. Moreover, under the restriction $(\ref{equ:restriction,N,h,bbgky-hnls})$ that
\begin{align}
N\geq e^{(2)}\lrs{\lrc{C_{V}^{2}E_{0,\hbar}^{2}T_{0}/\hbar^{2\al}}^{2}},
\end{align}
for $N\geq N_{0}(\be)$ we have $(\ref{equ:decay estimate, uniform in h,bbgky-hartree})$ and $(\ref{equ:decay estimate, uniform in h,bbgky-hartree,collapsing part})$ which we restate here
\begin{align*}
&\sup_{t\in [0,T_{0}]}\n{S_{\hbar}^{(1,1)}w_{N,\hbar}^{(1)}(t)}_{L_{x,x'}^{2}}\leq \lrs{\frac{1}{\ln N}}^{100},\\
&\int_{[0,T_{0}]}\n{S_{\hbar}^{(1,1)}B_{N,\hbar,1,2}^{\pm}w_{N,\hbar}^{(2)}(t)}_{L_{x,x'}^{2}}dt\leq \lrs{\frac{1}{\ln N}}^{100}.
\end{align*}

\end{proposition}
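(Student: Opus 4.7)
The plan is to iterate Proposition \ref{lemma:decay estimate for fp,dp,ip} over $n := n(T_{0},\hbar)$ consecutive time windows $[mT,(m+1)T]$, $m=0,\ldots,n-1$, of common length $T = T_{0}/n = \hbar^{2\al}/(8eC_{V}C_{0,\hbar})^{2}$. This choice satisfies the local smallness condition $T \leq \hbar^{2\al}/(64E_{0,\hbar}C_{V}e)^{2}$ required by Proposition \ref{lemma:decay estimate for fp,dp,ip}, and yields the clean identity $\la := 4C_{V}\hbar^{-\al}T^{1/2} = 1/(2eC_{0,\hbar})$.

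I would establish by induction on $m$ the statement that for all $k \leq K_{m} := (\ln N)^{2} - (1-\tfrac{5}{2}\be)\sum_{j=0}^{m}(\ln N)/(2^{j}j!)$,
\begin{align*}
\sup_{t\in[0,mT]}\n{S_{\hbar}^{(1,k)}w_{N,\hbar}^{(k)}(t)}_{L_{x,x'}^{2}} \leq (2^{m}C_{0,\hbar})^{k}\, N^{(\frac{5}{2}\be-1)/(2^{m} m!)}.
\end{align*}
The base case $m=0$ is the hypothesis $(\ref{equ:initial condition, factorized})$ together with $E_{0,\hbar} \leq C_{0,\hbar}$. For the inductive step from $m$ to $m+1$, I would apply $(\ref{equ:decay estimate for fp,dp,ip})$ on $[mT,(m+1)T]$ at coupling level $l_{c,m+1} := \lceil(1-\tfrac{5}{2}\be)(\ln N)/(2^{m+1}(m+1)!)\rceil$ and substitute the inductive hypothesis into its finite sum. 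Writing $B_{m}=2^{m}C_{0,\hbar}$, the dominant contribution is $(2B_{m})^{k}N^{(\frac{5}{2}\be-1)/(2^{m}m!)}\sum_{j=0}^{l_{c,m+1}}(\la B_{m})^{j}$, and the particular choice of $l_{c,m+1}$ is designed to force $(\la B_{m})^{l_{c,m+1}}N^{(\frac{5}{2}\be-1)/(2^{m}m!)} \leq N^{(\frac{5}{2}\be-1)/(2^{m+1}(m+1)!)}$ while simultaneously making the tail $(C_{0,\hbar})^{k}e^{-l_{c,m+1}-1}$ and the driving residual $(C_{0,\hbar})^{k}N^{\frac{5}{2}\be-1}$ absorbable into the target $(2^{m+1}C_{0,\hbar})^{k}N^{(\frac{5}{2}\be-1)/(2^{m+1}(m+1)!)}$. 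After $n$ iterations, $2^{n}C_{0,\hbar}\leq e^{n}C_{0,\hbar}$ gives $(\ref{equ:decay estimate})$. The collapsing bound $(\ref{equ:decay estimate,collapsing part})$ follows by additively decomposing $\int_{[0,T_{0}]}$ over the $n$ windows, applying $(\ref{equ:decay estimate for collapsing part})$ on each, and controlling the marginals $\n{S_{\hbar}^{(1,2+j)}w_{N,\hbar}^{(2+j)}(mT)}_{L_{x,x'}^{2}}$ appearing there by the just-established inductive estimate at level $k=2+j$; the factor $n(T_{0},\hbar)$ in $(\ref{equ:decay estimate,collapsing part})$ reflects the number of windows.

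For the convergence rates $(\ref{equ:decay estimate, uniform in h,bbgky-hartree})$ and $(\ref{equ:decay estimate, uniform in h,bbgky-hartree,collapsing part})$, I would observe that the restriction $(\ref{equ:restriction,N,h,bbgky-hnls})$ is equivalent to $\ln\ln N \gtrsim (C_{V}^{2}E_{0,\hbar}^{2}T_{0}/\hbar^{2\al})^{2} \gtrsim n(T_{0},\hbar)^{2}$, so by Stirling $2^{n}n! \ll \ln N/\ln\ln N$ for $N \geq N_{0}(\be)$. Consequently $N^{(\frac{5}{2}\be-1)/(2^{n}n!)} \leq (\ln N)^{-100}$, and specialising $(\ref{equ:decay estimate})$ and $(\ref{equ:decay estimate,collapsing part})$ to $k=1$ and $k=2$ respectively delivers the claimed decay rates.

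The main obstacle is the per-step calibration of $l_{c,m+1}$: once $m \geq 2$, the ratio $\la B_{m} = 2^{m-1}/e$ exceeds $1$, so $\sum_{j}(\la B_{m})^{j}$ is comparable to $(\la B_{m})^{l_{c,m+1}}$, and one must choose $l_{c,m+1}$ small enough that this geometric growth does not outrun the mild relaxation of the $N$-exponent from $1/(2^{m}m!)$ to $1/(2^{m+1}(m+1)!)$, yet large enough to suppress the coupling-truncation error $e^{-l_{c,m+1}-1}$. The scale $l_{c,m+1} \sim (\ln N)/(2^{m+1}(m+1)!)$ is the unique window where both conditions hold, and the factorial denominator $2^{n}n!$ in the final decay exponent is exactly the cumulative product produced by iterating this $2(m+1)$-per-step contraction $n$ times.
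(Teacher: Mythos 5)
Your proposal matches the paper's proof: both iterate Proposition \ref{lemma:decay estimate for fp,dp,ip} over $n(T_{0},\hbar)$ windows of length $T=\hbar^{2\al}/(8eC_{V}C_{0,\hbar})^{2}$, re-seeding each step with the previous window's bound as new initial data while shrinking the coupling level $l_{c}\sim(1-\tfrac{5}{2}\be)\ln N/(2^{m+1}(m+1)!)$, which is precisely the origin of the factorial denominator in the final $N$-exponent, and then invoke the double-exponential restriction on $N$ to keep $2^{n}n!$ and the prefactor $e^{n}C_{0,\hbar}$ small relative to $\ln N$. The only cosmetic discrepancies are the prefactor $2^{m}$ versus the paper's $e^{m}$ (both are dominated by $e^{n}C_{0,\hbar}$ at the end), the threshold for $\la B_{m}>1$ being $m\ge 3$ rather than $m\ge 2$, and the paper's sharper Stirling estimate $2^{n}n!\le\sqrt{\ln N}$ instead of your $\ln N/\ln\ln N$; none of these affects the validity of the argument.
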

\begin{proof}
Step 0. Set $\la= \frac{1}{8eC_{V}C_{0,\hbar}}$. Then for
\begin{align*}
k\leq (\ln N)^{2}-(1-\frac{5}{2}\be)\ln N,\quad l_{c}\leq (1-\frac{5}{2}\be)\ln N,
\end{align*}
by estimate $(\ref{equ:decay estimate for fp,dp,ip})$ in Proposition \ref{lemma:decay estimate for fp,dp,ip}, we have
\begin{align}
&\sup_{t\in[0,\la^{2} \hbar^{2\al}]}\n{S_{\hbar}^{(1,k)}w_{N,\hbar}^{(k)}(t)}_{L_{x,x'}^{2}}\label{equ:interation estimate, decay, step 1,first}\\
\leq &
2^{k}\sum_{j=0}^{l_{c}}(4C_{V}\la)^{j}
\n{S_{\hbar}^{(1,k+j)}w_{N,\hbar}^{(k+j)}(0)}_{L_{x,x'}^{2}}
+(C_{0,\hbar})^{k}N^{\frac{5}{2}\be-1}+(C_{0,\hbar})^{k}\lrs{\frac{1}{e}}^{l_{c}+1}.\notag
\end{align}
By initial condition $(\ref{equ:initial condition, factorized})$ in condition $(c)$, we plug in $\la=\frac{1}{8eC_{V}C_{0,\hbar}}$ and take $l_{c}=(1-\frac{5}{2}\be)\ln N$ to get
\begin{align}\label{equ:interation estimate, decay, step 1}
\sup_{t\in[0,\la^{2} \hbar^{2\al}]}\n{S_{\hbar}^{(1,k)}w_{N,\hbar}^{(k)}(t)}_{L_{x,x'}^{2}} \leq 4(C_{0,\hbar})^{k}N^{\frac{5}{2}\be-1}
\end{align}
for every $k\leq (\ln N)^{2}-(1-\frac{5}{2}\be)\ln N$.

Step 1. Let $t_{1}=\la^{2}\hbar^{2\al}$. For
\begin{align*}
k\leq (\ln N)^{2}-(1-\frac{5}{2}\be)\lrs{\ln N+\frac{\ln N}{2}},\quad l_{c}\leq (1-\frac{5}{2}\be)\ln N,
\end{align*}
we make use of estimate $(\ref{equ:decay estimate for fp,dp,ip})$ in Proposition \ref{lemma:decay estimate for fp,dp,ip} again to obtain
\begin{align*}
&\sup_{t\in[t_{1},t_{1}+\la^{2} \hbar^{2\al}]}\n{S_{\hbar}^{(1,k)}w_{N,\hbar}^{(k)}(t)}_{L_{x,x'}^{2}}\\
\leq &
2^{k}\sum_{j=0}^{l_{c}}(4C_{V}\la)^{j}
\n{S_{\hbar}^{(1,k+j)}w_{N,\hbar}^{(k+j)}(t_{1})}_{L_{x,x'}^{2}}+(C_{0,\hbar})^{k}N^{\frac{5}{2}\be-1}+(C_{0,\hbar})^{k}\lrs{\frac{1}{e}}^{l_{c}+1}
\end{align*}
Since $k+l_{c}\leq (\ln N)^{2}-(1-\frac{5}{2}\be)\ln N$, one can adopt estimate $(\ref{equ:interation estimate, decay, step 1})$ in Step 0 to reach
\begin{align*}
\leq &
N^{\frac{5}{2}\be-1}4(C_{0,\hbar})^{k}\sum_{j=0}^{l_{c}}(4C_{V}C_{0,\hbar}\la )^{j}
+(C_{0,\hbar})^{k}N^{\frac{5}{2}\be-1}+(C_{0,\hbar})^{k}\lrs{\frac{1}{e}}^{l_{c}+1}
\end{align*}
Recalling $\la=\frac{1}{8eC_{V}C_{0,\hbar}}$,
\begin{align*}
\leq &
N^{\frac{5}{2}\be-1}8(C_{0,\hbar})^{k}
+(C_{0,\hbar})^{k}N^{\frac{5}{2}\be-1}+(C_{0,\hbar})^{k}\lrs{\frac{1}{e}}^{l_{c}+1}
\end{align*}
By taking $l_{c}=(1-\frac{5}{2}\be)\ln N/2$, we arrive at
\begin{align}\label{equ:interation estimate, decay, step 1,second}
\sup_{t\in[t_{1},t_{1}+\la^{2} \hbar^{2\al}]}\n{S_{\hbar}^{(1,k)}w_{N,\hbar}^{(k)}(t)}_{L_{x,x'}^{2}}\leq& (eC_{0,\hbar})^{k}N^{\frac{\frac{5}{2}\be-1}{2}}
\end{align}
for every $k\leq (\ln N)^{2}-(1-\frac{5}{2}\be)\lrs{\ln N+\frac{\ln N}{2}}$.

Step $m$.
Let $t_{m}=m\la^{2}\hbar^{2\al}$. Now we assume $(\ref{equ:interation estimate, decay, step 1,second})$ is true for the case $n=m$, that is,
\begin{align}\label{equ:interation estimate, decay, step m}
&\sup_{t\in[t_{m},t_{m}+\la^{2} \hbar^{2\al}]}\n{S_{\hbar}^{(1,k)}w_{N,\hbar}^{(k)}(t)}_{L_{x,x'}^{2}}
\leq  (e^{m}C_{0,\hbar})^{k}N^{\frac{\frac{5}{2}\be-1}{2^{m}m!}}
\end{align}
for every $k\leq (\ln N)^{2}-(1-\frac{5}{2}\be)\sum_{j=0}^{m}\frac{\ln N}{2^{j}j!}$. Then we will prove it for $n=m+1$.

For
\begin{align*}
k\leq (\ln N)^{2}-(1-\frac{5}{2}\be)\sum_{j=0}^{m+1}\frac{\ln N}{2^{j}j!},\quad l_{c}\leq \frac{(1-\frac{5}{2}\be)\ln N}{2^{m+1}(m+1)!},
\end{align*}
one can employ estimate $(\ref{equ:decay estimate for fp,dp,ip})$ in Proposition \ref{lemma:decay estimate for fp,dp,ip} to reach
\begin{align*}
&\sup_{t\in[t_{m+1},t_{m+1}+\la^{2} \hbar^{2\al}]}\n{S_{\hbar}^{(1,k)}w_{N,\hbar}^{(k)}(t)}_{L_{x,x'}^{2}}\\
\leq &
2^{k}\sum_{j=0}^{l_{c}}(4C_{V}\la)^{j}
\n{S_{\hbar}^{(1,k+j)}w_{N,\hbar}^{(k+j)}(t_{m+1})}_{L_{x,x'}^{2}}+(C_{0,\hbar})^{k}N^{\frac{5}{2}\be-1}+(C_{0,\hbar})^{k}
\lrs{\frac{1}{e}}^{l_{c}+1}
\end{align*}
Since
$k+l_{c}\leq (\ln N)^{2}-(1-\frac{5}{2}\be)\sum_{j=0}^{m}\frac{\ln N}{2^{j}j!}$,
one can use estimate $(\ref{equ:interation estimate, decay, step m})$ in the case $n=m$ to get
\begin{align*}
\leq &N^{\frac{\frac{5}{2}\be-1}{2^{m}m!}}(2e^{m}C_{0,\hbar})^{k}\sum_{j=0}^{l_{c}}(4C_{V}\la)^{j}(e^{m}C_{0,\hbar})^{j}+
(C_{0,\hbar})^{k}N^{\frac{5}{2}\be-1}+(C_{0,\hbar})^{k}\lrs{\frac{1}{e}}^{l_{c}+1}
\end{align*}
Recalling $\la=\frac{1}{8eC_{V}C_{0,\hbar}}$,
\begin{align*}
\leq& (2e^{m}C_{0,\hbar})^{k} N^{\frac{\frac{5}{2}\be-1}{2^{m}m!}}\lrs{e^{m}}^{l_{c}+1}+
(C_{0,\hbar})^{k}N^{\frac{5}{2}\be-1}+(C_{0,\hbar})^{k}\lrs{\frac{1}{e}}^{l_{c}+1}
\end{align*}
Taking $l_{c}+1=\frac{(1-\frac{5}{2}\be)\ln N}{2^{m+1}(m+1)!}$, we arrive at
\begin{align*}
&\sup_{t\in[t_{m+1},t_{m+1}+\la^{2} \hbar^{2\al}]}\n{S_{\hbar}^{(1,k)}w_{N,\hbar}^{(k)}(t)}_{L_{x,x'}^{2}}\\
\leq& (2e^{m}C_{0,\hbar})^{k}N^{\frac{\frac{5}{2}\be-1}{2^{m+1}m!}}+(C_{0,\hbar})^{k}N^{\frac{5}{2}\be-1}+(C_{0,\hbar})^{k}N^{\frac{\frac{5}{2}\be-1}{2^{m+1}(m+1)!}}\\
\leq& (e^{m+1}C_{0,\hbar})^{k}N^{\frac{\frac{5}{2}\be-1}{2^{m+1}(m+1)!}}.
\end{align*}
This proves $(\ref{equ:interation estimate, decay, step m})$ and completes the proof of $(\ref{equ:decay estimate})$ as we can take $m=n(T_{0},\hbar)=(8eC_{V}C_{0,\hbar})^{2}T_{0}/\hbar^{2\al}$.

For $(\ref{equ:decay estimate,collapsing part})$, we can use estimate $(\ref{equ:decay estimate for collapsing part})$ in Proposition \ref{lemma:decay estimate for fp,dp,ip} to get to
\begin{align}
&\int_{[t_{m},t_{m}+\la^{2}\hbar^{2\al}]}\n{S_{\hbar}^{(1,1)}B_{N,\hbar,1,2}^{\pm}w_{N,\hbar}^{(2)}(t)}_{L_{x,x'}^{2}}dt\\
\leq &4\sum_{j=0}^{l_{c}}\lrs{4C_{V}\la }^{j+1}
\n{S_{\hbar}^{(1,2+j)}w_{N,\hbar}^{(2+j)}(t_{m})}_{L_{x,x'}^{2}}
+
C_{0,\hbar}^{2}N^{\frac{5}{2}\be-1}+C_{0,\hbar}^{2}\lrs{\frac{1}{e}}^{l_{c}+1}\notag
\end{align}
Plugging in estimate $(\ref{equ:interation estimate, decay, step m})$,
\begin{align*}
\leq 4\sum_{j=0}^{l_{c}}\lrs{4C_{V}\la }^{j+1}
(e^{m}C_{0,\hbar})^{j+2}N^{\frac{\frac{5}{2}\be-1}{2^{m}m!}}
+
C_{0,\hbar}^{2}N^{\frac{5}{2}\be-1}+C_{0,\hbar}^{2}\lrs{\frac{1}{e}}^{l_{c}+1}
\end{align*}
Recalling $\la=\frac{1}{8eC_{V}C_{0,\hbar}}$,
\begin{align*}
\leq 4C_{0,\hbar} (e^{m})^{l_{c}+2}N^{\frac{\frac{5}{2}\be-1}{2^{m}m!}}+C_{0,\hbar}^{2}N^{\frac{5}{2}\be-1}+C_{0,\hbar}^{2}\lrs{\frac{1}{e}}^{l_{c}+1}
\end{align*}
Setting $l_{c}+2=\frac{(1-\frac{5}{2}\be)\ln N}{2^{m+1}(m+1)!}$, we arrive at
\begin{align*}
\leq& 4C_{0,\hbar}N^{\frac{\frac{5}{2}\be-1}{2^{m+1}m!}}+C_{0,\hbar}^{2}N^{\frac{5}{2}\be-1}+eC_{0,\hbar}^{2}
N^{\frac{\frac{5}{2}\be-1}{2^{m+1}(m+1)!}}\\
\leq& 8C_{0,\hbar}^{2}N^{\frac{\frac{5}{2}\be-1}{2^{m+1}(m+1)!}}.
\end{align*}
Then by summing the integration time domain, we obtain
\begin{align}
&\int_{[0,T_{0}]}\n{S_{\hbar}^{(1,1)}B_{N,\hbar,1,2}^{\pm}w_{N,\hbar}^{(2)}(t)}_{L_{x,x'}^{2}}dt\\
\leq &\sum_{m=0}^{n(T_{0},\hbar)}\int_{[t_{m},t_{m+1}]}\n{S_{\hbar}^{(1,1)}B_{N,\hbar,1,2}^{\pm}w_{N,\hbar}^{(2)}(t)}_{L_{x,x'}^{2}}dt\notag\\
\leq& \sum_{m=0}^{n(T_{0},\hbar)}8C_{0,\hbar}^{2}N^{\frac{\frac{5}{2}\be-1}{2^{m+1}(m+1)!}}\notag\\
\leq& 8n(T_{0},\hbar)C_{0,\hbar}^{2}N^{\frac{\frac{5}{2}\be-1}{2^{n(T_{0},\hbar)}n(T_{0},\hbar)!}}.\notag
\end{align}
This completes the proof of $(\ref{equ:decay estimate,collapsing part})$.

For estimates $(\ref{equ:decay estimate, uniform in h,bbgky-hartree})$ and $(\ref{equ:decay estimate, uniform in h,bbgky-hartree,collapsing part})$, under the restriction $(\ref{equ:restriction,N,h,bbgky-hnls})$ that
\begin{align}
N\geq e^{(2)}\lrs{\lrc{C_{V}^{2}E_{0,\hbar}^{2}T_{0}/\hbar^{7}}^{2}}
\end{align}
which implies that $n(T_{0},\hbar)\leq \sqrt{C\ln\ln N}$ with an absolute constant $C$,
we have
\begin{align*}
2^{n(T_{0},\hbar)}n(T_{0},\hbar)!\leq n(T_{0},\hbar)^{n(T_{0},\hbar)}\leq (\sqrt{C\ln\ln N})^{\sqrt{C\ln\ln N}}\leq \sqrt{\ln N}.
\end{align*}
Also, we have
\begin{align*}
8n(T_{0},\hbar)C_{0,\hbar}^{2}\leq e^{n(T_{0},\hbar)}C_{0,\hbar}\leq n(T_{0},\hbar)^{n(T_{0},\hbar)}\leq \sqrt{\ln N}.
\end{align*}
Hence, we obtain
\begin{align*}
&\sup_{t\in [0,T_{0}]}\n{S_{\hbar}^{(1,1)}w_{N,\hbar}^{(1)}(t)}_{L_{x,x'}^{2}}\leq e^{n(T_{0},\hbar)}C_{0,\hbar}N^{\frac{\frac{5}{2}\be-1}{2^{n(T_{0},\hbar)}n(T_{0},\hbar)!}}\leq \frac{\sqrt{\ln N}}{N^{\frac{1-\frac{5}{2}\be}{\sqrt{\ln N}}}}\leq \lrs{\frac{1}{\ln N}}^{100},\\
&\int_{[0,T_{0}]}\n{S_{\hbar}^{(1,1)}B_{N,\hbar,1,2}^{\pm}w_{N,\hbar}^{(2)}(t)}_{L_{x,x'}^{2}}dt\leq 8n(T_{0},\hbar)C_{0,\hbar}^{2}N^{\frac{\frac{5}{2}\be-1}{2^{n(T_{0},\hbar)}n(T_{0},\hbar)!}}\leq\lrs{\frac{1}{\ln N}}^{100},
\end{align*}
for $N\geq N_{0}(\be)$. This completes the proof of estimates $(\ref{equ:decay estimate, uniform in h,bbgky-hartree})$ and $(\ref{equ:decay estimate, uniform in h,bbgky-hartree,collapsing part})$.
\end{proof}

\section{H-NLS v.s. the Compressible Euler Equation: a Modulated Energy Approach}\label{section:H-NLS v.s. the Compressible Euler Equation: a Modulated Energy Approach}
We will compare the H-NLS equation $(\ref{equ:N-hartree equation})$ and the compressible Euler equation $(\ref{equ:euler equation})$ before its blowup time by the method of modulated energy. Recall the H-NLS equation $(\ref{equ:N-hartree equation})$
\begin{align*}
\begin{cases}
&i\hbar\pa_{t}\phi_{N,\hbar}=-\frac{1}{2}\hbar^{2}\Delta\phi_{N,\hbar}+\lrs{V_{N}*|\phi_{N,\hbar}|^{2}}\phi_{N,\hbar},\\
&\phi_{N,\hbar}(0)=\phi_{N,\hbar}^{in},
\end{cases}
\end{align*}
with the mass density and momentum density defined by $(\ref{equ:quantum mass density,momentum density,one-body})$
\begin{align*}
&\rho_{N,\hbar}(t,x)=|\phi_{N,\hbar}(t,x)|^{2},\quad
J_{N,\hbar}(t,x)=\hbar\operatorname{Im}\lrs{\ol{\phi_{N,\hbar}}(t,x)\nabla \phi_{N,\hbar}(t,x)},
\check{}\end{align*}
and the compressible Euler equation $(\ref{equ:euler equation})$
\begin{equation*}
\begin{cases}
& \partial _{t}\rho +\nabla \cdot \left( \rho u\right) =0, \\
& \partial _{t}u+(u\cdot \nabla )u+b_{0}\nabla \rho =0,\\
& (\rho,u)|_{t=0}=(\rho^{in},u^{in}).
\end{cases}
\end{equation*}

Here is the main theorem of the section.
\begin{theorem}\label{thm:semiclassical limit}
Let $\phi_{N,\hbar}(t)$ be the solution to H-NLS equation with the initial data $\phi_{N,h}^{in}$.
Under the same conditions of Theorem $\ref{thm:main theorem,bbgky-to-euler}$.
Then we have\footnote{Under the restriction $(\ref{equ:restriction,N,h})$, the smallness factor $\frac{1}{\hbar^{4}N^{\be}}$ can be absorbed into $\hbar^{2}$.}
\begin{align}
&\n{\rho_{N,\hbar}-\rho}_{L^{\infty}([0,T_{0}];L^{2}(\R^{d}))}\leq C(T_{0})\lrs{\frac{1}{\hbar^{4}N^{\be}}+\hbar^{2}}^{\frac{1}{2}},\label{equ:convergence,uniform in h,hartree-euler,density}\\
&\n{J_{N,\hbar}-\rho u}_{L^{\infty}([0,T_{0}];L^{r}(\R^{d}))}\leq C(T_{0})\lrs{\frac{1}{\hbar^{4}N^{\be}}+\hbar^{2}}^{\frac{1}{2}\lrs{\frac{4}{r}-3}}\label{equ:convergence,uniform in h,hartree-euler,moment},
\end{align}
where $r\in [1,4/3)$,
\begin{align}
&\n{\rho_{N,\hbar}V_{N}*\rho_{N,\hbar}(t,x)-b_{0}\rho(t,x)^{2}}_{L^{1}([0,T_{0}];L^{1}(\R^{d}))}\leq
C(T_{0})\lrs{\frac{1}{\hbar^{4}N^{\be}}+\hbar^{2}}^{\frac{1}{2}}.\label{equ:convergence,uniform in h,hartree-euler,pressure}
\end{align}

\end{theorem}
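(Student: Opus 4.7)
The plan is to run a Gronwall argument on the full modulated energy $\mathcal M[\phi_{N,\hbar},\rho,u](t)$ from \eqref{equ:modulated energy,outline}, and then translate control of $\mathcal M$ into the three stated convergences. The starting observation is the algebraic splitting
\begin{align*}
\mathcal M(t)&=\tfrac{1}{2}\int\bigl|(i\hbar\nabla-u)\phi_{N,\hbar}\bigr|^{2}\,dx+\tfrac{b_{0}}{2}\int(\rho_{N,\hbar}-\rho)^{2}\,dx\\
&\quad+\tfrac{1}{2}\int\bigl(V_{N}\ast\rho_{N,\hbar}-b_{0}\rho_{N,\hbar}\bigr)\rho_{N,\hbar}\,dx,
\end{align*}
in which the first two summands are manifestly non-negative, while the last---a $V_{N}\to b_{0}\delta$ correction---is dominated uniformly on $[0,T_{0}]$ by $C(E_{0})(\hbar^{4}N^{\beta})^{-1}$ thanks to $V\in\mathcal S$ together with the conserved H-NLS bound $\|\hbar\nabla\phi_{N,\hbar}\|_{L^{2}}\lesssim E_{0}^{1/2}$. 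The initial hypothesis \eqref{equ:modulated energy, initial convergence rate} then gives $\mathcal M(0)\leq C(\hbar^{2}+(\hbar^{4}N^{\beta})^{-1})$.

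Second, I would differentiate $\mathcal M(t)$ in time, feeding \eqref{equ:N-hartree equation} into $\partial_{t}\phi_{N,\hbar}$ and \eqref{equ:euler equation} into $\partial_{t}\rho$, $\partial_{t}u$. After the standard WKB-type integrations by parts---the kinetic and interaction contributions that are individually singular as $\hbar\to 0$ pair against the Euler convective and pressure terms---the time derivative reduces to
\[
\tfrac{d}{dt}\mathcal M(t)=-\!\int(\nabla u):\mathrm{Re}\bigl(\overline{(i\hbar\nabla-u)\phi_{N,\hbar}}\otimes(i\hbar\nabla-u)\phi_{N,\hbar}\bigr)dx-\tfrac{b_{0}}{2}\!\int(\nabla\cdot u)(\rho_{N,\hbar}-\rho)^{2}dx+\mathcal R_{N,\hbar}(t),
\]
the first two terms being dominated by $\|\nabla u\|_{L^{\infty}}\mathcal M(t)$ (absorbing the $V_{N}$-correction), and the remainder $\mathcal R_{N,\hbar}$ collecting commutator expressions of the form $\int(V_{N}\ast\rho_{N,\hbar}-b_{0}\rho_{N,\hbar})\,\mathcal G[u,\rho,\phi_{N,\hbar}]\,dx$ with $\mathcal G$ polynomial in the Euler solution and the quantum densities. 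Each such term is bounded pointwise in $t$ by $C(\|u\|_{W^{1,\infty}},\|\nabla\rho\|_{L^{\infty}})(\hbar^{4}N^{\beta})^{-1}$. The regularity $s>\tfrac{d}{2}+3$ then feeds finiteness of $\|u\|_{W^{1,\infty}}$ and $\|\nabla\rho\|_{L^{\infty}}$ into Gronwall, producing $\mathcal M(t)\leq C(T_{0})(\hbar^{2}+(\hbar^{4}N^{\beta})^{-1})$ on $[0,T_{0}]$.

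To extract the stated estimates, \eqref{equ:convergence,uniform in h,hartree-euler,density} follows immediately from the splitting of $\mathcal M$ by taking a square root. For the momentum, the algebraic identity $J_{N,\hbar}\pm\rho_{N,\hbar}u=\pm\mathrm{Re}\bigl(\overline{\phi_{N,\hbar}}(i\hbar\nabla-u)\phi_{N,\hbar}\bigr)$ (with sign fixed by the Madelung convention) combined with Cauchy-Schwarz and $\|\phi_{N,\hbar}\|_{L^{2}}=1$ yields $\|J_{N,\hbar}-\rho u\|_{L^{1}}\lesssim\mathcal M(t)^{1/2}+\|\rho_{N,\hbar}-\rho\|_{L^{2}}\|u\|_{L^{2}}\lesssim\mathcal M(t)^{1/2}$; interpolating against the a priori $L^{4/3}$ bound on $J_{N,\hbar}-\rho u$ coming from the $H^{1}$ energy bound and Sobolev embedding, with interpolation exponent $\theta=4/r-3$, delivers \eqref{equ:convergence,uniform in h,hartree-euler,moment}. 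For the pressure, the splitting $\rho_{N,\hbar}V_{N}\ast\rho_{N,\hbar}-b_{0}\rho^{2}=b_{0}(\rho_{N,\hbar}-\rho)(\rho_{N,\hbar}+\rho)+\rho_{N,\hbar}(V_{N}\ast\rho_{N,\hbar}-b_{0}\rho_{N,\hbar})$, combined with the density bound on the first piece and the $V_{N}$-smoothing estimate on the second, produces \eqref{equ:convergence,uniform in h,hartree-euler,pressure}.

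The main obstacle is keeping the rate $(\hbar^{4}N^{\beta})^{-1}$ alive in $\mathcal R_{N,\hbar}$ rather than collapsing to $N^{-\beta}$. A naive absolute-value bound on $\int(V_{N}\ast\rho_{N,\hbar}-b_{0}\rho_{N,\hbar})\mathcal G\,dx$ ignores the semiclassical scaling and kills the rate. The fix is to arrange every commutator so that the factor $V_{N}\ast\cdot-b_{0}\cdot$ is paired against a quantity depending only on the smooth Euler data $(\rho,u)$, so that the Fourier-side estimate $\widehat V(\xi/N^{\beta})-b_{0}=O(|\xi|^{2}/N^{2\beta})$ converts into $H^{-1}$ smallness against $\|\nabla(\rho,u)\|_{L^{2}}$; alternatively, one symmetrizes the convolution via $\int(V_{N}\ast f)g=\int(V_{N}\ast g)f$ so that derivatives fall on the regular factor. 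Identifying the correct algebraic grouping for every cubic-in-$\phi_{N,\hbar}$ interaction term---so that no surviving error requires $H^{2}$ control of $\phi_{N,\hbar}$ (unavailable uniformly in $\hbar$)---is the central technical point; after that the Gronwall closing is routine.
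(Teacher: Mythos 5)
Your proposal follows essentially the same route as the paper: the same modulated energy $\mathcal{M}[\phi_{N,\hbar},\rho,u]$, the same algebraic splitting into a nonnegative main part plus a $V_N \to b_0\delta$ correction controlled by $(\hbar^4 N^\beta)^{-1}$, the same Gronwall closing against $\|\nabla u\|_{L^\infty}$, and the same extraction of the three convergences (square root of $\mathcal M$ for density; $J_{N,\hbar}-\rho_{N,\hbar}u = \operatorname{Im}(\ol{\phi}(\hbar\nabla - iu)\phi)$ with Cauchy--Schwarz and $L^1$--$L^{4/3}$ interpolation for momentum; the decomposition $\rho_{N,\hbar}V_N*\rho_{N,\hbar}-b_0\rho^2 = \rho_{N,\hbar}W_N*\rho_{N,\hbar} + b_0(\rho_{N,\hbar}^2-\rho^2)$ for pressure). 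The one term you did not isolate explicitly is the Bohm-potential contribution $-\tfrac{\hbar^2}{4}\int\rho_{N,\hbar}\,\Delta\operatorname{div}u\,dx$ in $\frac{d}{dt}\mathcal M$, which is bounded by $\hbar^2\|\Delta\operatorname{div}u\|_{L^\infty}$ (and is what drives the requirement $s>\tfrac{d}{2}+3$), not by $(\hbar^4N^\beta)^{-1}$; since your final Gronwall bound $\mathcal M(t)\lesssim \hbar^2+(\hbar^4N^\beta)^{-1}$ already carries both rates, this is an omission of exposition rather than a gap in the argument.
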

\begin{proof}[\textbf{Proof of Theorem $\ref{thm:semiclassical limit}$}]
By  $(\ref{equ:quantitative estimate, density})$ and $(\ref{equ:quantitative estimate, moment})$ in Proposition \ref{proposition:upper bound for modulated energy}, we have
\begin{align*}
&\n{\rho_{N,\hbar}-\rho}_{L^{\infty}([0,T_{0}];L^{2}(\R^{d}))}\leq C(T_{0}) \lrs{\frac{1}{\hbar^{4}N^{\be}}+\hbar^{2}}^{\frac{1}{2}},\\
&\n{(i\hbar\nabla-u)\phi_{N,\hbar}}_{L^{\infty}([0,T_{0}];L^{2}(\R^{d}))}\leq C(T_{0})\lrs{\frac{1}{\hbar^{4}N^{\be}}+\hbar^{2}}^{\frac{1}{2}},
\end{align*}
which directly completes the proof of $(\ref{equ:convergence,uniform in h,hartree-euler,density})$.

For $(\ref{equ:convergence,uniform in h,hartree-euler,moment})$, by the triangle and H\"{o}lder's inequalities as well as estimates $(\ref{equ:quantitative estimate, density})$ and $(\ref{equ:quantitative estimate, moment})$ we have
\begin{align*}
\n{J_{N,\hbar}-\rho u}_{L^{1}(\R^{d})}\leq &\n{J_{N,\hbar}-\rho_{N,\hbar}u}_{L^{1}(\R^{d})}+\n{\rho_{N,\hbar}u-\rho u}_{L^{1}(\R^{d})}\\
=&\n{\operatorname{Im}\lrs{\ol{\phi_{N,\hbar}}\lrs{\hbar\nabla -iu}\phi_{N,\hbar}}}_{L^{1}(\R^{d})}+\n{\rho_{N,\hbar}u-\rho u}_{L^{1}(\R^{d})}\\
\leq& \n{\phi_{N,\hbar}}_{L^{2}(\R^{d})}\n{(i\hbar\nabla-u)\phi_{N,\hbar}}_{L^{2}(\R^{d})}+\n{u}_{L^{2}(\R^{d})}
\n{\rho_{N,\hbar}-\rho}_{L^{2}(\R^{d})}\\
\leq &C(T_{0}) \lrs{\frac{1}{\hbar^{4}N^{\be}}+\hbar^{2}}^{\frac{1}{2}}
\end{align*}
On the other hand, by the energy bound for $\phi_{N,\hbar}$ and the uniform bound for $\n{\rho_{N,\hbar}}_{L^{2}}$ we have
\begin{align}
\n{J_{N,\hbar}}_{L^{4/3}}\leq &\n{\hbar\nabla \phi_{N,\hbar}}_{L^{2}}\n{\phi_{N,\hbar}}_{L^{4}}\lesssim E_{0},
\end{align}
where we used energy bound and uniform bound for $\n{\rho_{N,\hbar}}_{L^{2}}$ in the last inequality. Hence, by interpolation inequality we obtain
\begin{align}
\n{J_{N,\hbar}-\rho u}_{L^{\infty}([0,T_{0}];L^{r}(\R^{d}))}\leq& \n{J_{N,\hbar}-\rho u}_{L^{\infty}([0,T_{0}];L^{1}(\R^{d}))}^{1-\al}\n{J_{N,\hbar}-\rho u}_{L^{\infty}([0,T_{0}];L^{4/3}(\R^{d}))}^{\al}\\
\leq &C\lrs{\frac{1}{\hbar^{4}N^{\be}}+\hbar^{2}}^{\frac{1-\al}{2}}E_{0}^{\al},\notag
\end{align}
where $\al=4-4/r$.
This completes the proof of $(\ref{equ:convergence,uniform in h,hartree-euler,moment})$.

For $(\ref{equ:convergence,uniform in h,hartree-euler,pressure})$, by triangle inequality we have
\begin{align}
\n{\rho_{N,\hbar}V_{N}*\rho_{N,\hbar}-b_{0}\rho^{2}}_{L^{1}(\R^{d})}\leq \n{\rho_{N,\hbar}V_{N}*\rho_{N,\hbar}-
b_{0}(\rho_{N,\hbar})^{2}}_{L^{1}(\R^{d})}+
b_{0}\n{(\rho_{N,\hbar})^{2}-\rho^{2}}_{L^{1}(\R^{d})}
\end{align}
By the approximation of identity estimate $(\ref{equ:estimate for error term, modulated energy})$ which reads
\begin{align}
\n{\rho_{N,\hbar}V_{N}*\rho_{N,\hbar}-b_{0}(\rho_{N,\hbar})^{2}}_{L^{1}(\R^{d})}\lesssim \frac{1}{\hbar^{4}N^{\be}}
\end{align}
and estimate $(\ref{equ:convergence,uniform in h,hartree-euler,density})$, we have
\begin{align*}
\n{\rho_{N,\hbar}V_{N}*\rho_{N,\hbar}-b_{0}\rho^{2}}_{L^{1}(\R^{d})}\lesssim & \frac{1}{\hbar^{4}N^{\be}}+\n{\rho_{N,\hbar}-\rho}_{L^{2}(\R^{d})}\lrs{\n{\rho_{N,\hbar}}_{L^{2}(\R^{d})}+\n{\rho}_{L^{2}(\R^{d})}}\\
\leq &C(T_{0})\lrs{\frac{1}{\hbar^{4}N^{\be}}+\hbar^{2}}^{\frac{1}{2}}.
\end{align*}
By taking $L^{\infty}$ norm at $dt$, we complete the proof of $(\ref{equ:convergence,uniform in h,hartree-euler,pressure})$. Thus we have proved Theorem $\ref{thm:semiclassical limit}$ assuming
Proposition $\ref{proposition:upper bound for modulated energy}$ and $(\ref{equ:estimate for error term, modulated energy})$. The rest of this section is to prove them.
\end{proof}

\subsection{The Evolution of the Modulated Energy}\label{section:The Evolution of the Modulated Energy}
We consider the following modulated energy
\begin{align}\label{equ:modulated energy}
\mathcal{M}\lrc{\phi_{N,\hbar},\rho,u}(t)=&\frac{1}{2}\int_{\R^{d}}|(i\hbar\nabla-u)\phi_{N,\hbar}(t)|^{2}dx\\
&+\frac{1}{2}\lra{V_{N}*\rho_{N,\hbar},
\rho_{N,\hbar}}+\frac{b_{0}}{2}\int_{\R^{d}}\rho^{2}dx-b_{0}\int_{\R^{d}}\rho\rho_{N,\hbar}dx.\notag
\end{align}
We need to derive a time evolution equation for $\mathcal{M}\lrc{\phi_{N,\hbar},\rho,u}(t)$.
The related quantities for $\phi_{N,\hbar}$ are given as the following.
\begin{lemma}\label{lemma:moment equation}
We have the following estimates regarding $\phi_{N,\hbar}$:
\begin{align}
&\pa_{t}\rho_{N,\hbar}+\operatorname{div}J_{N,\hbar}=0,\label{equ:mass conservation}\\
&\pa_{t}J_{N,\hbar}^{j}+\sum_{j,k}\pa_{k}\lrc{\hbar^{2}\operatorname{Re}\lrs{\pa_{j}\ol{\phi_{N,\hbar}}
\pa_{k}\phi_{N,\hbar}}-\frac{\hbar^{2}}{4}\pa_{jk}\rho_{N,\hbar}}+
\lrs{\pa_{j}\lrs{V_{N}*\rho_{N,\hbar}}}\rho_{N,\hbar}=0,
\label{equ:momontum conservation}\\
&E_{N,\hbar}(t)\equiv E_{N,\hbar}(0),\label{equ:energy conservation law}
\end{align}
where the energy $E_{N,\hbar}(t)$ is defined by
\begin{align}
E_{N,\hbar}(t)=\frac{1}{2}\n{\hbar\nabla \phi_{N,\hbar}(t)}_{L^{2}}^{2}+\frac{1}{2}\lra{V_{N}*\rho_{N,\hbar},\rho_{N,\hbar}}(t).
\end{align}
We also have the approximation of identity estimate:
\begin{align}
&\n{\rho_{N,\hbar}V_{N}*\rho_{N,\hbar}-b_{0}(\rho_{N,\hbar})^{2}}_{L^{1}(\R^{d})}\lesssim \frac{1}{\hbar^{4}N^{\be}}.\label{equ:estimate for error term, modulated energy}
\end{align}
\end{lemma}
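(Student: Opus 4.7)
The first three identities are the standard mass/momentum/energy conservation laws for the H-NLS dynamics $(\ref{equ:N-hartree equation})$; the last estimate quantifies the approximation of identity $V_N \to b_0 \delta$. The plan is to verify each by direct computation using $(\ref{equ:N-hartree equation})$ together with the energy bound $(\ref{equ:energy bound,one-body})$ on $\phi_{N,\hbar}$.

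For mass conservation $(\ref{equ:mass conservation})$ I will differentiate $\rho_{N,\hbar}=|\phi_{N,\hbar}|^{2}$ in time and substitute $i\hbar\pa_{t}\phi_{N,\hbar}$ from $(\ref{equ:N-hartree equation})$: the interaction term contributes symmetrically to $\operatorname{Re}(\ol{\phi_{N,\hbar}}\pa_{t}\phi_{N,\hbar})$ and drops, while $2\operatorname{Re}\lrs{\ol{\phi_{N,\hbar}}\cdot\tfrac{i\hbar}{2}\Delta\phi_{N,\hbar}}=-\hbar\operatorname{Im}\lrs{\ol{\phi_{N,\hbar}}\Delta\phi_{N,\hbar}}=-\nabla\cdot\lrs{\hbar\operatorname{Im}(\ol{\phi_{N,\hbar}}\nabla\phi_{N,\hbar})}=-\nabla\cdot J_{N,\hbar}$, using $\operatorname{Im}(\nabla\ol\phi\cdot\nabla\phi)=0$. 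Energy conservation $(\ref{equ:energy conservation law})$ follows in the same spirit: after one integration by parts on the kinetic part and using the symmetry of $V_N$ to distribute the interaction part, $\frac{d}{dt}E_{N,\hbar}$ collapses to $2\operatorname{Re}\blra{\pa_{t}\phi_{N,\hbar},-\tfrac{\hbar^{2}}{2}\Delta\phi_{N,\hbar}+(V_{N}*\rho_{N,\hbar})\phi_{N,\hbar}}=2\operatorname{Re}\blra{\pa_{t}\phi_{N,\hbar},i\hbar\pa_{t}\phi_{N,\hbar}}=0$.

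The momentum law $(\ref{equ:momontum conservation})$ is the most delicate step and in my view the main obstacle. I will apply $\pa_{t}$ to $J_{N,\hbar}^{j}=\hbar\operatorname{Im}(\ol{\phi_{N,\hbar}}\pa_{j}\phi_{N,\hbar})$, insert $(\ref{equ:N-hartree equation})$ into both $\pa_{t}\phi_{N,\hbar}$ and $\pa_{t}\ol{\phi_{N,\hbar}}$, and sort the output into a kinetic piece and a convolution piece. To recast the kinetic piece into the quantum stress form $\pa_{k}\lrc{\hbar^{2}\operatorname{Re}(\pa_{j}\ol{\phi_{N,\hbar}}\pa_{k}\phi_{N,\hbar})-\tfrac{\hbar^{2}}{4}\pa_{jk}\rho_{N,\hbar}}$ I will use the product-rule identity $\pa_{jk}\rho_{N,\hbar}=2\operatorname{Re}(\pa_{j}\ol{\phi_{N,\hbar}}\pa_{k}\phi_{N,\hbar})+2\operatorname{Re}(\ol{\phi_{N,\hbar}}\pa_{jk}\phi_{N,\hbar})$, which lets me trade $\operatorname{Re}(\ol\phi\,\pa_{jk}\phi)$ for $\operatorname{Re}(\pa_{j}\ol\phi\,\pa_{k}\phi)$ plus a total $\pa_{k}$-derivative of $\tfrac{1}{4}\pa_{j}\rho_{N,\hbar}$; this is the mechanism producing the Bohm correction. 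The convolution piece collapses directly to $-\rho_{N,\hbar}\,\pa_{j}(V_{N}*\rho_{N,\hbar})$ by evenness of $V_{N}$. The whole computation is textbook Madelung/quantum hydrodynamics, but demands careful sign and factor tracking.

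For the approximation estimate $(\ref{equ:estimate for error term, modulated energy})$, the plan is to write
\[
V_{N}*\rho_{N,\hbar}(x)-b_{0}\rho_{N,\hbar}(x)=\int V_{N}(y)\lrs{\rho_{N,\hbar}(x-y)-\rho_{N,\hbar}(x)}\,dy
\]
and apply Minkowski in $L^{3/2}$ together with the translation estimate $\n{\rho_{N,\hbar}(\cdot-y)-\rho_{N,\hbar}}_{L^{3/2}}\leq|y|\n{\nabla\rho_{N,\hbar}}_{L^{3/2}}$. Since $\int|y|V_{N}(y)\,dy=N^{-\be}\int|z|V(z)\,dz$, this yields $\n{V_{N}*\rho_{N,\hbar}-b_{0}\rho_{N,\hbar}}_{L^{3/2}}\lesssim N^{-\be}\n{\nabla\rho_{N,\hbar}}_{L^{3/2}}$. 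I then bound $\n{\nabla\rho_{N,\hbar}}_{L^{3/2}}\leq 2\n{\phi_{N,\hbar}}_{L^{6}}\n{\nabla\phi_{N,\hbar}}_{L^{2}}\lesssim\n{\nabla\phi_{N,\hbar}}_{L^{2}}^{2}\lesssim 1/\hbar^{2}$ via Sobolev embedding $H^{1}\hookrightarrow L^{6}$ in $\R^{3}$ and $(\ref{equ:energy bound,one-body})$, and pair this through H\"older with $\n{\rho_{N,\hbar}}_{L^{3}}=\n{\phi_{N,\hbar}}_{L^{6}}^{2}\lesssim 1/\hbar^{2}$ to conclude $\n{\rho_{N,\hbar}(V_{N}*\rho_{N,\hbar}-b_{0}\rho_{N,\hbar})}_{L^{1}}\lesssim 1/(\hbar^{4}N^{\be})$. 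The only nontrivial point here is choosing the H\"older split so that exactly four factors of $\hbar^{-1}$ accumulate; any other pairing (e.g.\ $L^{1}$--$L^{\infty}$ or $L^{2}$--$L^{2}$) would cost either more powers of $\hbar^{-1}$ or the loss of smallness in $N$.
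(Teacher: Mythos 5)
Your proof is correct and follows essentially the same route as the paper. For the three conservation laws the paper simply declares them standard and omits the computation, so your sketch fills in what the authors chose not to write; there are no surprises there. For the approximation-of-identity estimate your H\"older split $L^{1}\leq L^{3/2}\cdot L^{3}$, the $N^{-\beta}$ gain, and the $\hbar^{-4}$ accounting via the energy bound are exactly what the paper does. The one difference is a single step: the paper invokes Lemma \ref{lemma:quantative estimate for identity approximation} (the black-box estimate $\n{W_N*f}_{L^{p}}\lesssim N^{-\beta s}\n{\lra{\nabla}^{s}f}_{L^{p}}$) together with the fractional Leibniz rule, whereas you reprove the needed $L^{3/2}$ bound directly by Minkowski and the first-moment-of-$V$ translation argument, combined with the ordinary Leibniz rule $\nabla\rho_{N,\hbar}=2\operatorname{Re}(\ol{\phi_{N,\hbar}}\nabla\phi_{N,\hbar})$. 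Your version is marginally more self-contained and gives the slightly sharper homogeneous bound $\n{V_N*\rho-b_0\rho}_{L^{3/2}}\lesssim N^{-\beta}\n{\nabla\rho}_{L^{3/2}}$, but the hypotheses (first moment of $V$ finite) and the output are the same, so this is a stylistic rather than a substantive divergence.
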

\begin{proof}
We omit the proof of $(\ref{equ:mass conservation})-(\ref{equ:energy conservation law})$ as this is a direct computation and is well-known in $H^{1}$ wellposedness
theory. For $(\ref{equ:estimate for error term, modulated energy})$, we set $W_{N}=V_{N}-b_{0}\delta$ and rewrite
\begin{align*}
\n{\rho_{N,\hbar}V_{N}*\rho_{N,\hbar}-b_{0}(\rho_{N,\hbar})^{2}}_{L^{1}(\R^{d})}=\n{\rho_{N,\hbar}W_{N}*\rho_{N,\hbar}}_{L^{1}(\R^{d})}
\end{align*}
By H\"{o}lder,
\begin{align*}
\leq&\n{W_{N}*\rho_{N,\hbar}}_{L^{3/2}}\n{\rho_{N,\hbar}}_{L^{3}}
\end{align*}
By Lemma \ref{lemma:quantative estimate for identity approximation},
\begin{align*}
\lesssim& N^{-\be}\n{\lra{\nabla}\rho_{N,\hbar}}_{L^{3/2}}\n{\rho_{N,\hbar}}_{L^{3}}
\end{align*}
By fractional Leibniz rule in Lemma \ref{lemma:leibniz rule} and Sobolev inequality,
\begin{align*}
\lesssim&N^{-\be}\n{\phi_{N,\hbar}}_{H^{1}}^{4}
\end{align*}
By the energy bound for $\phi_{N,\hbar}$,
\begin{align*}
\lesssim& \frac{1}{\hbar^{4}N^{\be}},
\end{align*}
which completes the proof of $(\ref{equ:estimate for error term, modulated energy})$.
\end{proof}

Next let us derive the time derivative of $\mathcal{M}\lrc{\phi_{N,\hbar},\rho,u}(t)$.
\begin{proposition}\label{proposition:the evolution of modulated energy}
There holds
\begin{align}\label{equ:the evolution of modulated energy}
&\frac{d}{dt}\mathcal{M}\lrc{\phi_{N,\hbar},\rho,u}(t)\\
=&-\int_{\R^{d}}\pa_{k}u^{j}\operatorname{Re}
\lrs{(\hbar\pa_{k}-iu^{k})\phi_{N,\hbar}\ol{\lrs{\hbar\pa_{j}-iu^{j}}\phi_{N,\hbar}}}\notag\\
&-\frac{b_{0}}{2}\int_{\R^{d}}\operatorname{div}u(\rho_{N,\hbar}-\rho)^{2}dx-\frac{\hbar^{2}}{4}\int_{\R^{d}}
 \rho_{N,\hbar}\lrs{\Delta \operatorname{div} u}dx+Er\notag
\end{align}
where the summation convention for repeated indices is used and the error term is given by
\begin{align}\label{equ:error term, modulated energy}
Er=\int_{\R^{d}}u^{j}(\pa_{j}(V_{N}*\rho_{N,\hbar}))\rho_{N,\hbar}dx+\frac{b_{0}}{2}\int_{\R^{d}} \operatorname{div}u (\rho_{N,\hbar})^{2}dx.
\end{align}
\end{proposition}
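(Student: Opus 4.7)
The plan is to split $\mathcal{M}[\phi_{N,\hbar},\rho,u]$ into a conserved part plus four cross terms. Expanding the modulated kinetic density using $J_{N,\hbar}=\hbar\operatorname{Im}(\overline{\phi_{N,\hbar}}\nabla\phi_{N,\hbar})$ and $\rho_{N,\hbar}=|\phi_{N,\hbar}|^2$ gives
\begin{equation*}
\tfrac{1}{2}\int_{\R^d}|(i\hbar\nabla-u)\phi_{N,\hbar}|^2\,dx=\tfrac{1}{2}\n{\hbar\nabla\phi_{N,\hbar}}_{L^2}^2+\tfrac{1}{2}\int_{\R^d}|u|^2\rho_{N,\hbar}\,dx-\int_{\R^d}u\cdot J_{N,\hbar}\,dx,
\end{equation*}
so $\mathcal{M}=E_{N,\hbar}(t)+\tfrac{1}{2}\int|u|^2\rho_{N,\hbar}-\int u\cdot J_{N,\hbar}+\tfrac{b_0}{2}\int\rho^2-b_0\int\rho\,\rho_{N,\hbar}$. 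By the conservation law $(\ref{equ:energy conservation law})$ the $E_{N,\hbar}$ contribution has vanishing time derivative, so only the four cross terms need to be differentiated.

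I would next differentiate each cross term using $(\ref{equ:mass conservation})$, $(\ref{equ:momontum conservation})$ for $\rho_{N,\hbar},J_{N,\hbar}$ and the Euler system $(\ref{equ:euler equation})$ for $\rho,u$, then integrate by parts to move derivatives off $J_{N,\hbar}$, off the flux $\rho u$, and off the quantum stress tensor $\hbar^2\operatorname{Re}(\partial_j\overline{\phi_{N,\hbar}}\partial_k\phi_{N,\hbar})-\tfrac{\hbar^2}{4}\partial_{jk}\rho_{N,\hbar}$ appearing in $(\ref{equ:momontum conservation})$. Every resulting term is of one of four types: a product of $\partial_k u^j$ with a bilinear form in $\phi_{N,\hbar}$; a multiple of $\operatorname{div} u$ times one of $\rho^2,\rho\rho_{N,\hbar},\rho_{N,\hbar}^2$; a multiple of $\rho_{N,\hbar}\Delta\operatorname{div} u$ produced from the Bohm part $-\tfrac{\hbar^2}{4}\partial_{jk}\rho_{N,\hbar}$ via two integrations by parts; or the non-reducible interaction $\int u^j\partial_j(V_N*\rho_{N,\hbar})\rho_{N,\hbar}\,dx$.

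To recognize the first output term of $(\ref{equ:the evolution of modulated energy})$, I would invoke the algebraic identity
\begin{equation*}
\operatorname{Re}\bigl[(\hbar\partial_k-iu^k)\phi_{N,\hbar}\,\overline{(\hbar\partial_j-iu^j)\phi_{N,\hbar}}\bigr]=\hbar^2\operatorname{Re}(\partial_k\phi_{N,\hbar}\overline{\partial_j\phi_{N,\hbar}})-u^jJ_{N,\hbar}^k-u^kJ_{N,\hbar}^j+u^ku^j\rho_{N,\hbar}.
\end{equation*}
Its right-hand side is produced, after contraction against $\partial_k u^j$, exactly by the classical stress piece from $(\ref{equ:momontum conservation})$, the two $u\cdot J_{N,\hbar}$ pieces generated by IBP in $\tfrac{d}{dt}\tfrac{1}{2}\int|u|^2\rho_{N,\hbar}$ and $\tfrac{d}{dt}\int u\cdot J_{N,\hbar}$, and the $|u|^2\rho_{N,\hbar}$ piece obtained by transporting $|u|^2/2$ against $(\ref{equ:mass conservation})$. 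The three $b_0$-quadratic contributions—coming from $\tfrac{b_0}{2}\partial_t\rho^2$ (via continuity), $b_0\nabla\rho$ in the Euler force acting on $J_{N,\hbar}$, and $b_0\rho$ against the continuity of $\rho_{N,\hbar}$—assemble after IBP into $-\tfrac{b_0}{2}\operatorname{div} u(\rho_{N,\hbar}-\rho)^2$; the Bohm IBP supplies the $\rho_{N,\hbar}\Delta\operatorname{div} u$ correction; and what is left over is exactly $Er$ as defined in $(\ref{equ:error term, modulated energy})$.

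The main obstacle is careful sign and index bookkeeping—specifically verifying the two successive IBPs $\partial_k u^j\,\partial_{jk}\rho_{N,\hbar}\to\rho_{N,\hbar}\Delta\operatorname{div} u$ that produce the Bohm correction, and confirming the symmetric pairing $u^jJ_{N,\hbar}^k+u^kJ_{N,\hbar}^j$ in the identity above using $\operatorname{Im}(\partial_k\phi_{N,\hbar}\overline{\phi_{N,\hbar}})=-\operatorname{Im}(\overline{\phi_{N,\hbar}}\partial_k\phi_{N,\hbar})=-J_{N,\hbar}^k/\hbar$. No PDE subtlety arises: once the four evolution laws from Lemma $\ref{lemma:moment equation}$ and the compressible Euler system $(\ref{equ:euler equation})$ have been invoked and all divergences integrated by parts, the remaining argument is purely algebraic.
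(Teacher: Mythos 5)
Your proposal is correct and follows essentially the same route as the paper: isolate the conserved $E_{N,\hbar}$ part, differentiate the four cross terms using $(\ref{equ:mass conservation})$, $(\ref{equ:momontum conservation})$, and the Euler system, integrate by parts, and regroup via the algebraic expansion of $\operatorname{Re}\bigl[(\hbar\partial_k-iu^k)\phi_{N,\hbar}\overline{(\hbar\partial_j-iu^j)\phi_{N,\hbar}}\bigr]$, which the paper writes implicitly in its final display and you make explicit. One small slip in your closing remark: $\operatorname{Im}(\partial_k\phi_{N,\hbar}\overline{\phi_{N,\hbar}})=\operatorname{Im}(\overline{\phi_{N,\hbar}}\partial_k\phi_{N,\hbar})=J_{N,\hbar}^{k}/\hbar$ (same complex number, so same imaginary part); the sign flip you need is $\operatorname{Im}(\phi_{N,\hbar}\overline{\partial_j\phi_{N,\hbar}})=-J_{N,\hbar}^{j}/\hbar$, coming from conjugation, and with that correction the identity you display is right as written.
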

\begin{proof}
By energy conservation law $(\ref{equ:energy conservation law})$ in Lemma $(\ref{lemma:moment equation})$, we obtain
\begin{align*}
\frac{d}{dt}\mathcal{M}\lrc{\phi_{N,\hbar},\rho,u}(t)=&\frac{1}{2}\frac{d}{dt}\n{\hbar\nabla \phi_{N,\hbar}(t)}_{L^{2}}^{2}+\frac{1}{2}\frac{d}{dt}\int_{\R^{d}}|u|^{2}\rho_{N,\hbar}dx
-\frac{d}{dt}\int_{\R^{d}}J_{N,\hbar}udx\\
&+\frac{1}{2}\frac{d}{dt}\lra{V_{N}*\rho_{N,\hbar},\rho_{N,\hbar}}+\frac{b_{0}}{2}\frac{d}{dt}\int_{\R^{d}} \rho^{2}dx-b_{0}\frac{d}{dt}\int_{\R^{d}}\rho_{N,\hbar}\rho dx\\
=&\frac{1}{2}\frac{d}{dt}\int_{\R^{d}}|u|^{2}\rho_{N,\hbar}dx
-\frac{d}{dt}\int_{\R^{d}}J_{N,\hbar}udx\\
&+\frac{b_{0}}{2}\frac{d}{dt}\int_{\R^{d}} \rho^{2}dx-b_{0}\frac{d}{dt}\int_{\R^{d}}\rho_{N,\hbar}\rho dx.
\end{align*}

Next, we calculate the above four terms separately. For the first term, by $(\ref{equ:euler equation})$ and $(\ref{equ:mass conservation})$ we find
\begin{align}\label{equ:modulated energy, the first term}
\frac{1}{2}\frac{d}{dt}\int_{\R^{d}}|u|^{2}\rho_{N,\hbar}dx
=&\int_{\R^{d}}\lrc{u\pa_{t}u\rho_{N,\hbar}+\frac{1}{2}|u|^{2}\pa_{t}\rho_{N,\hbar}}dx\\
=&\int_{\R^{d}}\lrc{\pa_{t}u^{j} \rho_{N,\hbar} u^{j}-\frac{1}{2}|u|^{2}\operatorname{div}J_{N,\hbar}}dx\notag\\
=&\int_{\R^{d}}\lrc{-\rho_{N,\hbar} u^{j}u^{k}\pa_{k}u_{j}-b_{0}\rho_{N,\hbar} u^{j}\pa_{j}\rho +J_{N,\hbar}^{j}u^{k}\pa_{j}u^{k}}dx\notag
\end{align}
where we have used integration by parts in the last equality.

For the second term, via $(\ref{equ:momontum conservation})$ and $(\ref{equ:euler equation})$ we have
\begin{align}\label{equ:modulated energy, the second term}
&-\frac{d}{dt}\int_{\R^{d}}J_{N,\hbar}udx\\
=& \int_{\R^{d}}\lrs{-\pa_{t}J_{N,\hbar}u-J_{N,\hbar}\pa_{t}u}dx\notag\\
=& \int_{\R^{d}} \lrs{
\partial _{k}\left( \hbar^{2}\operatorname{Re}(\partial _{j}\overline{\phi
_{N,h }}\partial _{k}\phi_{N,h })-\frac{\hbar^{2}}{4}\partial _{jk}^{2}\rho
_{N,\hbar}\right) +(\pa_{j}(V_{N}*\rho_{N,\hbar}))\rho_{N,\hbar}}u^{j}dx\notag\\
&+\int_{\R^{d}}J_{N,\hbar}^{j}u^{k}\pa_{k}u^{j}dx
+b_{0}\int_{\R^{d}}J_{N,\hbar}^{j}\pa_{j}\rho dx\notag
\end{align}
Integrating by parts and using $(\ref{equ:error term, modulated energy})$,
\begin{align*}
=&\int_{\R^{d}} -\hbar^{2}
\partial_{k}u^{j}\lrc{ \operatorname{Re}(\partial _{j}\overline{\phi
_{N,h }}\partial _{k}\phi _{N,h })} dx-\int_{\R^{d}}
\frac{\hbar^{2}}{4}\rho
_{N,h }\partial _{jk}^{2}\pa_{k}u^{j}dx\\
&-\frac{b_{0}}{2}\int \operatorname{div}u (\rho_{N,\hbar})^{2}dx +Er\notag
+\int_{\R^{d}}J_{N,\hbar}^{j}u^{k}\pa_{k}u^{j}dx
+b_{0}\int_{\R^{d}}J_{N,\hbar}^{j}\pa_{j}\rho dx.\notag
\end{align*}

For the third term, using $(\ref{equ:euler equation})$ and integration by parts, we obtain
\begin{align}\label{equ:modulated energy, the third term}
\frac{b_{0}}{2}\frac{d}{dt}\int_{\R^{d}} \rho^{2}dx
=&b_{0}\int _{\R^{d}}\rho \pa_{t}\rho dx
=-b_{0}\int _{\R^{d}}\rho \operatorname{div}(\rho u)dx\\
=&b_{0}\int_{\R^{d}}\lrs{\pa_{j}\rho}\rho u^{j}dx
=-\frac{b_{0}}{2}\int _{\R^{d}}\rho^{2} \operatorname{div}u dx\notag.
\end{align}

For the forth term, plugging in $(\ref{equ:euler equation})$ and $(\ref{equ:mass conservation})$, we integrate by parts to get
\begin{align}\label{equ:modulated energy, the forth term}
-b_{0}\frac{d}{dt}\int_{\R^{d}}\rho_{N,\hbar}\rho dx
=&b_{0}\int_{\R^{d}}-\rho\pa_{t}\rho_{N,\hbar}-\rho_{N,\hbar}\pa_{t}\rho dx \\
=&b_{0}\int_{\R^{d}} \rho \operatorname{div}J_{N,\hbar}+\rho_{N,\hbar}\operatorname{div}(\rho u) dx\notag\\
=&b_{0}\int_{\R^{d}} -\pa_{j}\rho J_{N,\hbar}^{j}+\rho_{N,\hbar}\rho \operatorname{div}u +\rho_{N,\hbar}u^{j}\pa_{j}\rho dx.\notag
\end{align}

Summing up $(\ref{equ:modulated energy, the first term})-(\ref{equ:modulated energy, the forth term})$, we conclude
\begin{align*}
&\frac{d}{dt}\mathcal{M}\lrc{\phi_{N,\hbar},\rho,u}(t)\\
=&\int_{\R^{d}}\lrc{-\rho_{N,\hbar} u^{j}u^{k}\pa_{k}u^{j}-b_{0}\rho_{N,\hbar} u^{j}\pa_{j}\rho +J_{N,\hbar}^{j}u^{k}\pa_{j}u^{k}}dx\\
&+\int_{\R^{d}} -\hbar ^{2}
\partial_{k}u^{j}\lrc{ \operatorname{Re}(\partial _{j}\overline{\phi
_{N,h }}\partial _{k}\phi _{N,h })} dx-\int_{\R^{d}}
\frac{\hbar^{2}}{4}\rho
_{N,h }\partial _{jk}^{2}\pa_{k}u^{j}dx\\
&-\frac{b_{0}}{2}\int \operatorname{div}u (\rho_{N,\hbar})^{2}dx +Er\notag
+\int_{\R^{d}}J_{N,\hbar}^{j}u^{k}\pa_{k}u^{j}dx
+\int_{\R^{d}}b_{0}J_{N,\hbar}^{j}\pa_{j}\rho dx\\
&-\frac{b_{0}}{2}\int _{\R^{d}}\rho^{2} \operatorname{div}u dx+\int_{\R^{d}} b_{0}\rho_{N,\hbar}\rho \operatorname{div}u +b_{0}\rho_{N,\hbar}u^{j}\pa_{j}\rho -b_{0}\pa_{j}\rho J_{N,\hbar}^{j} dx\\
=&-\int_{\R^{d}}\pa_{k}u^{j}\lr{\rho_{N,\hbar} u^{j}u^{k} +\hbar ^{2}
\lrc{ \operatorname{Re}(\partial _{j}\overline{\phi
_{N,h }}\partial _{k}\phi _{N,h })}- J_{N,\hbar}^{j}u^{k}-J_{N,\hbar}^{k}u^{j}}dx\\
&-\frac{b_{0}}{2}\int_{\R^{d}}\operatorname{div}u(\rho_{N,\hbar}-\rho)^{2}dx-\frac{\hbar^{2}}{4}\int_{\R^{d}}
\rho_{N,\hbar}\lrs{\Delta \operatorname{div} u}dx+Er
\end{align*}
which is equivalent to $(\ref{equ:the evolution of modulated energy})$. This completes the proof.

\end{proof}

\subsection{Modulated Energy Estimate}\label{section:Modulated Energy Estimate}
We first estimate for the error term $(\ref{equ:error term, modulated energy})$ and then establish Gronwall's inequality for the modulated energy $\mathcal{M}\lrc{\phi_{N,\hbar},\rho,u}(t)$.

\begin{lemma}\label{lemma:estimate for error term, modulated energy}
Let $Er$ be defined as in $(\ref{equ:error term, modulated energy})$. We have
\begin{align}
&|Er|\lesssim \frac{1}{\hbar^{4}N^{\be}}.\label{equ:estimate for error term, evolution of modulated energy}
\end{align}

\end{lemma}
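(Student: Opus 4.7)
The plan is to use the decomposition $V_N=b_0\delta+W_N$, where $W_N:=V_N-b_0\delta$ has mean zero and, by the scaling $V_N(x)=N^{3\beta}V(N^\beta x)$, first moment $\int|W_N(y)||y|\,dy\lesssim N^{-\beta}$. The $b_0$-piece of $\partial_j(V_N*\rho_{N,\hbar})$ contributes $b_0\int u^j\partial_j\rho_{N,\hbar}\cdot\rho_{N,\hbar}\,dx=-\tfrac{b_0}{2}\int\operatorname{div}u\cdot\rho_{N,\hbar}^2\,dx$, which exactly cancels the second summand of $Er$. Thus
\[
Er=\int u^j\partial_j(W_N*\rho_{N,\hbar})\,\rho_{N,\hbar}\,dx,
\]
and one integration by parts, combined with the symmetry of $W_N$, produces the two equivalent representations
\[
Er=-\int\operatorname{div}(u\rho_{N,\hbar})(W_N*\rho_{N,\hbar})\,dx=\int\bigl(W_N*(u\rho_{N,\hbar})\bigr)\cdot\nabla\rho_{N,\hbar}\,dx.
\]

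The key step is the commutator identity
\[
W_N*(u\rho_{N,\hbar})=u\cdot(W_N*\rho_{N,\hbar})+C',\qquad C'(x):=\int W_N(y)\bigl(u(x-y)-u(x)\bigr)\rho_{N,\hbar}(x-y)\,dy.
\]
The mean-value estimate $|u(x-y)-u(x)|\le|y|\,\|\nabla u\|_\infty$, Minkowski's inequality, the first-moment bound on $W_N$, and the Sobolev bound $\|\rho_{N,\hbar}\|_{L^3}=\|\phi_{N,\hbar}\|_{L^6}^2\lesssim\hbar^{-2}$ together yield $\|C'\|_{L^3}\lesssim\hbar^{-2}N^{-\beta}$. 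Substituting this commutator decomposition into the second representation of $Er$ and equating with the first makes the problematic $\int u\cdot\nabla\rho_{N,\hbar}(W_N*\rho_{N,\hbar})\,dx$ piece cancel, leaving
\[
2\,Er=\int C'\cdot\nabla\rho_{N,\hbar}\,dx-\int\operatorname{div}u\cdot\rho_{N,\hbar}(W_N*\rho_{N,\hbar})\,dx.
\]
The first integral is controlled by $\|C'\|_{L^3}\|\nabla\rho_{N,\hbar}\|_{L^{3/2}}\lesssim\hbar^{-4}N^{-\beta}$, using $\|\nabla\rho_{N,\hbar}\|_{L^{3/2}}\lesssim\|\phi_{N,\hbar}\|_{L^6}\|\nabla\phi_{N,\hbar}\|_{L^2}\lesssim\hbar^{-2}$; the second is controlled by $\|\operatorname{div}u\|_{L^\infty}\|\rho_{N,\hbar}(W_N*\rho_{N,\hbar})\|_{L^1}$, which is exactly the quantity shown to be $\lesssim\hbar^{-4}N^{-\beta}$ in the proof of $(\ref{equ:estimate for error term, modulated energy})$ just above.

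The main obstacle is that naive Hölder on either integration-by-parts representation of $Er$ alone requires $\|\nabla\rho_{N,\hbar}\|_{L^3}$, which is not controlled by the $H^1$ energy of $\phi_{N,\hbar}$ in three dimensions (it would demand $\|\nabla\phi_{N,\hbar}\|_{L^6}$, i.e.\ $H^2$-regularity). The commutator trick overcomes this by trapping the problematic $u\cdot\nabla\rho_{N,\hbar}(W_N*\rho_{N,\hbar})$ term between the two representations, so that only the energy-controlled norms $\|\rho_{N,\hbar}\|_{L^3}$ and $\|\nabla\rho_{N,\hbar}\|_{L^{3/2}}$ remain in the final Hölder pairing.
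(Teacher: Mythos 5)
Your argument is correct and takes a genuinely different route from the paper's. The paper antisymmetrizes the first summand of $Er$ by exploiting the oddness of $\partial_j V_N$, then Taylor-expands $u^j(x)-u^j(y)$ to second order, trading derivatives off $V_N$ against moment weights $|x|^2\partial_1 V$, $x^2 V$, etc., and splits off a compensating term so that the remaining piece $I_2$ reduces (via the integration-by-parts identity $\int(-x^1\partial_1 V)=\int V=b_0$) to the approximation-of-identity estimate $(\ref{equ:estimate for error term, modulated energy})$. You instead decompose $V_N=b_0\delta+W_N$, observe that the $\delta$-contribution to $\partial_j(V_N*\rho_{N,\hbar})$ is exactly $-\tfrac{b_0}{2}\int\operatorname{div}u\,\rho_{N,\hbar}^2$ and cancels the second summand of $Er$ at the algebraic level, then average two integration-by-parts representations of the residual $W_N$-integral so that the $H^2$-demanding term $\int u\cdot\nabla\rho_{N,\hbar}(W_N*\rho_{N,\hbar})$ drops out; what remains is a commutator term controlled by the first moment $\int|y||V_N(y)|\,dy\lesssim N^{-\beta}$ (the $\delta$-part contributes nothing since $u(x-y)-u(x)$ vanishes at $y=0$) paired against $\|\nabla\rho_{N,\hbar}\|_{L^{3/2}}$, plus a piece that is just $\|\operatorname{div}u\|_{\infty}$ times the quantity already bounded in $(\ref{equ:estimate for error term, modulated energy})$. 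Your route is cleaner structurally: the cancellation of the $b_0$-piece and the averaging trick make the commutator structure explicit and let you reuse the earlier lemma rather than redoing weighted-potential estimates from scratch, at the mild cost of a slightly informal first-moment statement for the distribution $W_N$ (harmless, since only the $V_N$-part has nonzero moment). The paper's route avoids $\delta$-distribution arithmetic by keeping everything at the level of Taylor remainders, which makes it more self-contained but also more computational.
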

\begin{proof}
For $(\ref{equ:estimate for error term, evolution of modulated energy})$, we decompose
\begin{align}
Er=&\sum_{j=1}^{3}\int_{\R^{d}}u^{j}(\pa_{j}(V_{N}*\rho_{N,\hbar}))\rho_{N,\hbar}dx+\frac{b_{0}}{2}\int_{\R^{d}} \operatorname{div}u (\rho_{N,\hbar})^{2}dx\\
=&I_{1}+I_{2}\notag
\end{align}
where
\begin{align}
I_{1}=&\sum_{j=1}^{3}\int_{\R^{d}}u^{j}(\pa_{j}(V_{N}*\rho_{N,\hbar}))\rho_{N,\hbar}dx\\
&-\sum_{j=1}^{3}\frac{1}{2}\int \pa_{j}u^{j}(y)\lrc{x^{j}-y^{j}}\pa_{j}\lrc{V_{N}(x-y)}\rho_{N,\hbar}(y)\rho_{N,\hbar}(x)dxdy\notag
\end{align}
and
\begin{align}
I_{2}=&\frac{b_{0}}{2}\int_{\R^{d}} \operatorname{div}u (\rho_{N,\hbar})^{2}dx\\
&+\sum_{j=1}^{3}\frac{1}{2}\int \pa_{j}u^{j}(y)\lrc{x^{j}-y^{j}}\pa_{j}\lrc{V_{N}(x-y)}\rho_{N,\hbar}(y)\rho_{N,\hbar}(x)dxdy\notag
\end{align}
with $x=(x^{1},x^{2},x^{3})$ and $y=(y^{1},y^{2},y^{3})$.

First, we deal with $I_{1}$. Note that
\begin{align}
&\int_{\R^{d}}u^{j}(\pa_{j}(V_{N}*\rho_{N,\hbar}))\rho_{N,\hbar}dx\\
=&\int u^{j}(x)\lrs{\pa_{j}V_{N}}(x-y)\rho_{N,\hbar}(x)\rho_{N,\hbar}(y)dxdy\notag\\
=&\int u^{j}(y)\lrs{\pa_{j}V_{N}}(y-x)\rho_{N,\hbar}(y)\rho_{N,\hbar}(x)dxdy.\notag
\end{align}
By the anti-symmetry of $\pa_{j}V_{N}$,
\begin{align*}
=&-\int u^{j}(y)\lrs{\pa_{j}V_{N}}(x-y)\rho_{N,\hbar}(x)\rho_{N,\hbar}(y)dxdy.
\end{align*}
Hence we obtain
\begin{align*}
I_{1}=&\frac{1}{2}\sum_{j=1}^{3}\int (u^{j}(x)-u^{j}(y))\pa_{j}V_{N}(x-y)\rho_{N,\hbar}(y)\rho_{N,\hbar}(x)dxdy\\
&-\frac{1}{2}\sum_{j=1}^{3}\int \pa_{j}u^{j}(y)\lrc{x^{j}-y^{j}}\pa_{j}V_{N}(x-y)\rho_{N,\hbar}(y)\rho_{N,\hbar}(x)dxdy
\end{align*}

It suffices to estimate the $j=1$ case. By Taylor's expansion, we get
\begin{align}
 (u^{1}(x)-u^{1}(y))=\sum_{i=1}^{3}\pa_{i}u^{1}(y)\lrc{x^{i}-y^{i}}+\frac{1}{2}\lrs{(x-y)\cdot \nabla}^{2}u^{1}(y+\theta(x-y)),
\end{align}
so we can rewrite
\begin{align*}
I_{1}=A_{1}+A_{2}+A_{3}
\end{align*}
where
\begin{align}
A_{1}=&\frac{1}{2}\int \frac{1}{2}\lrs{(x-y)\cdot \nabla}^{2}u^{1}(y+\theta(x-y))\pa_{1}V_{N}(x-y)\rho_{N,\hbar}(y)\rho_{N,\hbar}(x)dxdy,\\
A_{2}=&\frac{1}{2}\int \pa_{2}u^{1}(y)\lrc{x^{2}-y^{2}}\pa_{1}V_{N}(x-y)\rho_{N,\hbar}(y)\rho_{N,\hbar}(x)dxdy,\\
A_{3}=&\frac{1}{2}\int \pa_{3}u^{1}(y)\lrc{x^{3}-y^{3}}\pa_{1}V_{N}(x-y)\rho_{N,\hbar}(y)\rho_{N,\hbar}(x)dxdy.
\end{align}

For $A_{1}$,
\begin{align*}
|A_{1}|\lesssim&\frac{\n{D^{2}u}_{L^{\wq}}}{N^{2\be}} \int (N^{\be}|x-y|)^{2}|\pa_{1}V_{N}(x-y)|\rho_{N,\hbar}(x)\rho_{N,\hbar}(y)dxdy
\end{align*}
By H\"{o}lder,
\begin{align*}
\lesssim& \frac{\n{D^{2}u}_{L^{\wq}}}{N^{\be}}\n{|(|x|^{2}\pa_{1}V)_{N}|*\rho_{N,\hbar}}_{L^{2}}\n{\rho_{N,\hbar}}_{L^{2}}
\end{align*}
By Young's inequality, interpolation inequality, and the energy bound for $\phi_{N,\hbar}$,
\begin{align*}
\lesssim& \frac{\n{D^{2}u}_{L^{\wq}}\n{|x|^{2}\pa_{1}V}_{L^{1}}}{\hbar^{4}N^{\be}}.
\end{align*}

For $A_{2}$,
\begin{align}
A_{2}=&\frac{1}{2}\int \pa_{2}u^{1}(y)\lrc{x^{2}-y^{2}}\pa_{1}V_{N}(x-y)\rho_{N,\hbar}(y)\rho_{N,\hbar}(x)dxdy
\end{align}
By integration by parts,
\begin{align*}
=&-\frac{1}{2}\int \pa_{2}u^{1}(y)\lrc{x^{2}-y^{2}}V_{N}(x-y)\rho_{N,\hbar}(y)\pa_{1}\rho_{N,\hbar}(x)dxdy\\
=&-\frac{1}{2N^{\be}}\int \pa_{2}u^{1}(y)\lrc{N^{\be}(x^{2}-y^{2})}V_{N}(x-y)\rho_{N,\hbar}(y)\pa_{1}\rho_{N,\hbar}(x)dxdy.
\end{align*}
So we get
\begin{align}
|A_{2}|\lesssim  \frac{\n{Du}_{L^{\wq}}}{N^{\be}}\n{\wt{V}_{N}*\rho_{N,\hbar}}_{L^{3}}\n{\pa_{1}\rho_{N,\hbar}}_{L^{3/2}}
\end{align}
where we use the notation that $\wt{V}(x)=x^{2}V(x)$.
By Young's inequality and H\"{o}lder inequality,
\begin{align*}
\lesssim \frac{\n{Du}_{L^{\wq}}}{N^{\be}}\n{\wt{V}_{N}}_{L^{1}}\n{\rho_{N,\hbar}}_{L^{3}}\n{\phi_{N,\hbar}}_{L^{6}}
\n{\nabla \phi_{N,\hbar}}_{L^{2}}
\end{align*}
By Sobolev,
\begin{align*}
\lesssim \frac{\n{Du}_{L^{\wq}}}{N^{\be}}\n{\wt{V}}_{L^{1}}\n{\phi_{N,\hbar}}_{H^{1}}^{4}
\end{align*}
By the energy bound for $\phi_{N,\hbar}$,
\begin{align*}
\lesssim \frac{\n{Du}_{L^{\wq}}}{\hbar^{4}N^{\be}}\n{\wt{V}}_{L^{1}}.
\end{align*}

For $A_{3}$, we deal with it in the same way and obtain
\begin{align}
A_{3}\lesssim \frac{\n{Du}_{L^{\wq}}}{\hbar^{4}N^{\be}}\n{\wt{V}}_{L^{1}}
\end{align}

For $I_{2}$, it suffices to treat the case $j=1$. Let
\begin{align}
\widetilde{\widetilde{V}}(x)=-x^{1}\pa_{1}V(x),
\end{align}
then we have
\begin{align}
|I_{2}|=&\frac{1}{2}\bbabs{\lra{\pa_{1}u^{1}\wt{\wt{V}}_{N}*\rho_{N,\hbar},\rho_{N,\hbar}}-b_{0}\lra{\pa_{1}u^{1}\rho_{N,\hbar},\rho_{N,\hbar}}}\\
=&\frac{1}{2}\bbabs{\lra{\pa_{1}u^{1}(\wt{\wt{V}}_{N}-b_{0}\delta)*\rho_{N,\hbar},\rho_{N,\hbar}}}\notag
\end{align}
Since $\int \wt{\wt{V}}dx=\int V dx=b_{0}$, we can repeat the proof of the approximation of identity estimate $(\ref{equ:estimate for error term, modulated energy})$ to get to
\begin{align*}
\lesssim& \n{Du}_{L^{\wq}}\n{W_{N}*\rho_{N,\hbar}}_{L^{3/2}}\n{\rho_{N,\hbar}}_{L^{3}}\\
\lesssim& \frac{1}{\hbar^{4}N^{\be}}.
\end{align*}
Putting together the estimates of $I_{1}$ and $I_{2}$ completes the proof.
\end{proof}

We can now provide a closed estimate for the modulated energy.
\begin{proposition}\label{proposition:upper bound for modulated energy}
Let $\mathcal{M}\lrc{\phi_{N,\hbar},\rho,u}(t)$ be defined as in $(\ref{equ:modulated energy})$, we have
the lower bound estimate
\begin{align}\label{equ:lower bound for modulated energy}
\mathcal{M}\lrc{\phi_{N,\hbar},\rho,u}(t)+\frac{C}{\hbar^{4}N^{\be}}\geq 0
\end{align}
and the following Gronwall's inequality
\begin{align}\label{equ:upper bound for the evolution of modulated energy}
&\frac{d}{dt}\mathcal{M}\lrc{\phi_{N,\hbar},\rho,u}(t)
\lesssim \mathcal{M}\lrc{\phi_{N,\hbar},\rho,u}(t)+\frac{1}{\hbar^{4}N^{\be}}+
\hbar^{2}.
\end{align}
Moreover, we have
\begin{align}\label{equ:upper bound for modulated energy}
\mathcal{M}\lrc{\phi_{N,\hbar},\rho,u}(t)+\frac{C}{\hbar^{4}N^{\be}}\leq \exp(CT_{0})
\lrs{\mathcal{M}\lrc{\phi_{N,\hbar},\rho,u}(0)+\frac{C}{\hbar^{4}N^{\be}}+C\hbar^{2}t}
\end{align}
and
\begin{align}
&\n{\rho_{N,\hbar}-\rho}_{L^{\infty}([0,T_{0}];L^{2}(\R^{d}))}\leq C(T_{0}) \lrs{\frac{1}{\hbar^{4}N^{\be}}+\hbar^{2}}^{1/2},\label{equ:quantitative estimate, density}\\
&\n{(i\hbar\nabla-u)\phi_{N,\hbar}}_{L^{\infty}([0,T_{0}];L^{2}(\R^{d}))}\leq C(T_{0}) \lrs{\frac{1}{\hbar^{4}N^{\be}}+\hbar^{2}}^{1/2}.\label{equ:quantitative estimate, moment}
\end{align}
\end{proposition}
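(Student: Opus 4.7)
The argument has three logical steps: the lower bound $(\ref{equ:lower bound for modulated energy})$, the Gronwall-type differential inequality $(\ref{equ:upper bound for the evolution of modulated energy})$, and an integration giving $(\ref{equ:upper bound for modulated energy})$--$(\ref{equ:quantitative estimate, moment})$. For the lower bound, I would regroup the four summands in the definition of $\mathcal{M}$ by writing $\frac{b_{0}}{2}\int \rho^{2}dx-b_{0}\int \rho\rho_{N,\hbar}dx=\frac{b_{0}}{2}\int (\rho-\rho_{N,\hbar})^{2}dx-\frac{b_{0}}{2}\int \rho_{N,\hbar}^{2}dx$ and then combining the last piece with $\frac{1}{2}\lra{V_{N}*\rho_{N,\hbar},\rho_{N,\hbar}}$ into $\frac{1}{2}\int \rho_{N,\hbar}\lrs{V_{N}*\rho_{N,\hbar}-b_{0}\rho_{N,\hbar}}dx$. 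The modulated kinetic term and the $(\rho-\rho_{N,\hbar})^{2}$ term are manifestly non-negative (using $V\geq 0$, hence $b_{0}\geq 0$); the leftover mean-field discrepancy is $O(1/(\hbar^{4}N^{\be}))$ by the approximation-of-identity estimate $(\ref{equ:estimate for error term, modulated energy})$, which yields $(\ref{equ:lower bound for modulated energy})$.

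For the differential inequality, I would plug into the evolution identity $(\ref{equ:the evolution of modulated energy})$. Its first line is a bilinear form in the modulated velocity $(\hbar\pa_{k}-iu^{k})\phi_{N,\hbar}$ with symbol $\pa_{k}u^{j}$, so it is bounded by $\n{\nabla u}_{L^{\infty}}\int |(i\hbar\nabla-u)\phi_{N,\hbar}|^{2}dx$, which in turn is $\lesssim \mathcal{M}+1/(\hbar^{4}N^{\be})$ by the lower bound just derived. The piece $-\frac{b_{0}}{2}\int \operatorname{div}u(\rho_{N,\hbar}-\rho)^{2}dx$ is absorbed into $\mathcal{M}+1/(\hbar^{4}N^{\be})$ in the same way. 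The quantum-pressure term $-\frac{\hbar^{2}}{4}\int \rho_{N,\hbar}\Delta\operatorname{div}u\,dx$ is bounded by $\hbar^{2}\n{\Delta\operatorname{div}u}_{L^{\infty}}$ thanks to mass conservation $(\ref{equ:mass conservation})$ together with $\n{\phi_{N,\hbar}^{in}}_{L^{2}}=1$; the regularity $s>d/2+3$ in assumption $(d)$ ensures $\Delta\operatorname{div}u\in L^{\infty}$. The residual $Er$ is handled by $(\ref{equ:estimate for error term, evolution of modulated energy})$. Summing the four bounds produces $(\ref{equ:upper bound for the evolution of modulated energy})$.

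Then $(\ref{equ:upper bound for modulated energy})$ follows by applying Gronwall's lemma to the non-negative quantity $\mathcal{M}+C/(\hbar^{4}N^{\be})$. For $(\ref{equ:quantitative estimate, density})$--$(\ref{equ:quantitative estimate, moment})$, the initial assumption $(\ref{equ:modulated energy, initial convergence rate})$ combined once more with $(\ref{equ:estimate for error term, modulated energy})$ applied at $t=0$ gives $\mathcal{M}(0)\lesssim \hbar^{2}+1/(\hbar^{4}N^{\be})$; feeding this into $(\ref{equ:upper bound for modulated energy})$ and invoking the lower bound $(\ref{equ:lower bound for modulated energy})$ bounds both $\int |(i\hbar\nabla-u)\phi_{N,\hbar}|^{2}dx$ and $\int (\rho_{N,\hbar}-\rho)^{2}dx$ by $C(T_{0})(\hbar^{2}+1/(\hbar^{4}N^{\be}))$, and taking square roots delivers the two quantitative estimates. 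The only delicate point is bookkeeping: the indefinite-sign piece $-\frac{b_{0}}{2}\int \rho_{N,\hbar}^{2}$ inside $\mathcal{M}$ must be paired with the direct interaction energy both to expose non-negativity and to make Gronwall applicable, and $(\ref{equ:estimate for error term, modulated energy})$ is exactly the tool that effects this pairing, at the cost of the $1/(\hbar^{4}N^{\be})$ defect that appears throughout the four conclusions.
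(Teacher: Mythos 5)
Your proposal is correct and follows essentially the same route as the paper: the same regrouping of $\mathcal{M}$ via $W_{N}=V_{N}-b_{0}\delta$ to expose the two non-negative pieces plus an $O(1/(\hbar^{4}N^{\beta}))$ mean-field defect, the same term-by-term bound on the evolution identity using $\|Du\|_{L^\infty}$, mass conservation, and the error estimate, and the same Gronwall integration combined with the initial condition to produce the two quantitative estimates.
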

\begin{proof}
For $(\ref{equ:lower bound for modulated energy})$, we rewrite
\begin{align}
&\mathcal{M}\lrc{\phi_{N,\hbar},\rho,u}(t)\\
=&\frac{1}{2}\int_{\R^{d}}|(i\hbar\nabla-u)\phi_{N,\hbar}(t)|^{2}dx+\frac{b_{0}}{2}\int\lrs{ \rho_{N,\hbar}-\rho}^{2}dx+\frac{1}{2}\lra{W_{N}*\rho_{N,\hbar},\rho_{N,\hbar}},\notag
\end{align}
where $W_{N}=V_{N}-b_{0}\delta$.
By estimate $(\ref{equ:estimate for error term, modulated energy})$, we arrive at
\begin{align}
\mathcal{M}\lrc{\phi_{N,\hbar},\rho,u}(t)\gtrsim -\frac{1}{\hbar^{4}N^{\be}},
\end{align}
which completes the proof of $(\ref{equ:lower bound for modulated energy})$.

For $(\ref{equ:upper bound for the evolution of modulated energy})$, we make use of Proposition \ref{proposition:the evolution of modulated energy} to obtain\footnote{The regularity requirement that $s>\frac{d}{2}+3$ comes from $\n{\Delta \operatorname{div} u}_{L^{\infty}}$, the second term on the right side of $(\ref{equ:upper bound,modulated energy,regularity})$. One can reduce one derivative in requirement $(d)$ of Theorem \ref{thm:main theorem,bbgky-to-euler} by integration by parts at the price of weakening the convergence rate.}
\begin{align}
&\frac{d}{dt}\mathcal{M}\lrc{\phi_{N,\hbar},\rho,u}(t)\notag\\
=&-\int_{\R^{d}}\pa_{k}u^{j}\operatorname{Re}
\lrs{(\hbar\pa_{k}-iu^{k})\phi_{N,\hbar}\ol{\lrs{\hbar\pa_{j}-iu^{j}}\phi_{N,\hbar}}}\notag\\
&-\frac{b_{0}}{2}\int_{\R^{d}}\operatorname{div}u(\rho_{N,\hbar}-\rho)^{2}dx-\frac{\hbar^{2}}{4}\int_{\R^{d}}
 \rho_{N,\hbar}\lrs{\Delta \operatorname{div} u}dx+Er\notag\\
\lesssim &\n{Du}_{L^{\wq}}
\lrs{\int_{\R^{d}}|(i\hbar\nabla-u)\phi_{N,\hbar}(t)|^{2}dx+b_{0}\int\lrs{ \rho_{N,\hbar}-\rho}^{2}dx}\label{equ:upper bound,modulated energy,regularity}\\
&+\hbar^{2}\n{\rho_{N,\hbar}}_{L^{1}}\n{\Delta \operatorname{div} u}_{L^{\wq}}
+|Er|\notag
\end{align}
By the error term estimate $(\ref{equ:estimate for error term, evolution of modulated energy})$, we reach
\begin{align}
\frac{d}{dt}\mathcal{M}\lrc{\phi_{N,\hbar},\rho,u}(t)\lesssim\mathcal{M}\lrc{\phi_{N,\hbar},\rho,u}(t)+ \hbar^{2}+\frac{1}{\hbar^{4}N^{\be}}.
\end{align}
which completes the proof of $(\ref{equ:upper bound for the evolution of modulated energy})$.

Combining $(\ref{equ:lower bound for modulated energy})$ and $(\ref{equ:upper bound for the evolution of modulated energy})$, we have
\begin{align}
&\mathcal{M}\lrc{\phi_{N,\hbar},\rho,u}(t)+\frac{C}{\hbar^{4}N^{\be}}\\
=&\mathcal{M}\lrc{\phi_{N,\hbar},\rho,u}(0)+\frac{C}{\hbar^{4}N^{\be}}+
\int_{0}^{t}\frac{d}{d\tau}\lrs{\mathcal{M}\lrc{\phi_{N,\hbar},\rho,u}(\tau)+\frac{C}{\hbar^{4}N^{\be}}}d\tau\notag\\
\leq& \mathcal{M}\lrc{\phi_{N,\hbar},\rho,u}(0)+\frac{C}{\hbar^{4}N^{\be}}+
C\int_{0}^{t}\mathcal{M}\lrc{\phi_{N,\hbar},\rho,u}(\tau)+\frac{C}{\hbar^{4}N^{\be}}+
\hbar^{2}d\tau\notag\\
=&\lrs{\mathcal{M}\lrc{\phi_{N,\hbar},\rho,u}(0)+\frac{C}{\hbar^{4}N^{\be}}+C\hbar^{2}t}+
C\int_{0}^{t}\mathcal{M}\lrc{\phi_{N,\hbar},\rho,u}(\tau)+\frac{C}{\hbar^{4}N^{\be}}d\tau.\notag
\end{align}
Then by Gronwall's inequality, we obtain estimate $(\ref{equ:upper bound for modulated energy})$.

Finally, we deal with $(\ref{equ:quantitative estimate, density})$ and $(\ref{equ:quantitative estimate, moment})$. By error estimate $(\ref{equ:estimate for error term, modulated energy})$, we note that
\begin{align*}
&\int_{\R^{d}}|(i\hbar\nabla-u)\phi_{N,\hbar}(t)|^{2}dx+b_{0}\int\lrs{ \rho_{N,\hbar}-\rho}^{2}dx
\lesssim \mathcal{M}\lrc{\phi_{N,\hbar},\rho,u}(t)+\frac{1}{\hbar^{4}N^{\be}},\\
&\mathcal{M}\lrc{\phi_{N,\hbar},\rho,u}(0)\lesssim
\int_{\R^{d}}|(i\hbar\nabla-u^{in})\phi_{N,\hbar}^{in}|^{2}dx+b_{0}\int_{\R^{d}}\lrs{\rho_{N,\hbar}^{in}-\rho^{in}}^{2}dx+
\frac{1}{\hbar^{4}N^{\be}}.
\end{align*}
Hence, we can appeal to estimate $(\ref{equ:upper bound for modulated energy})$ and the initial condition $(\ref{equ:modulated energy, initial convergence rate})$ to get
\begin{align}
&\int_{\R^{d}}|(i\hbar\nabla-u)\phi_{N,\hbar}(t)|^{2}dx+b_{0}\int\lrs{ \rho_{N,\hbar}-\rho}^{2}dx\\
\leq &C\lrs{\mathcal{M}\lrc{\phi_{N,\hbar},\rho,u}(t)+\frac{1}{\hbar^{4}N^{\be}}}\notag \\
\leq &C(T_{0})\lrs{\hbar^{2}+\frac{1}{\hbar^{4}N^{\be}}}.\notag
\end{align}
This completes the proof of estimates $(\ref{equ:quantitative estimate, density})$ and $(\ref{equ:quantitative estimate, moment})$.
\end{proof}

\appendix

\section{Miscellaneous Lemmas}
\subsection{Collapsing Estimate and Strichartz Estimates}
\begin{lemma}[\cite{CP14,Che13,KM08}, KM Collapsing Estimate]\footnote{See also \cite{CP11,Che12,GSS14,
HS16,KSS11,Xie15} for many different versions of estimates of this type.}
There is a $C$ independent of $V$, $j$, $k$ and $N$ such that,
\begin{align}\label{equ:collapsing estimate d=3, propogator version,h=1}
\n{S^{(1,k)}B_{N,j,k+1}^{\pm}U^{(k+1)}f^{(k+1)}}_{L_{t}^{2}L_{x,x'}^{2}}\leq
C\n{V}_{L^{1}}\n{S^{(1,k+1)}f^{(k+1)}}_{L_{x,x'}^{2}}.
\end{align}
where $f^{(k+1)}(\textbf{x}_{k+1};\textbf{x}_{k+1}')$ is independent of $t$.
\end{lemma}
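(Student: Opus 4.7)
The plan is to reduce the $(k+1)$-particle estimate to a $2$-particle one via symmetry and the tensor product structure of $U^{(k+1)}$, and then prove the reduced estimate by Fourier analysis on a paraboloid. First, I would relabel particles to reduce to $j=1$ by the obvious permutation symmetry, and decompose $U^{(k+1)}=\bigotimes_{l=1}^{k+1}U_l^{(1)}$ with $U_l^{(1)}=e^{it\Delta_{x_l}/2}e^{-it\Delta_{x_l'}/2}$. Each spectator factor $U_l^{(1)}$ for $l\in\{2,\dots,k\}$ acts only on the pair $(x_l,x_l')$, commutes with $B_{N,1,k+1}^{\pm}$ (which touches only $x_1$ and $x_{k+1}$), commutes with the weight $\langle\nabla_{x_l}\rangle\langle\nabla_{x_l'}\rangle$, and is unitary on $L^2_{x_l,x_l'}$. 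Pulling these factors out of the norm reduces matters to the $k=1$ case, namely the inequality
\[
\bn{B_{N,1,2}^{\pm}U^{(2)}f^{(2)}}_{L^2_tL^2_{x_1,x_1'}}\le C\n{V}_{L^1}\bn{S^{(1,2)}f^{(2)}}_{L^2}.
\]

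Next, I would pass to the Fourier side in $(x_1,x_1')$ and use Plancherel in $t$. The propagator contributes the oscillatory phase $e^{it\Phi/2}$ with $\Phi=-|\xi_1|^2+|\xi_1'|^2-|\xi_2|^2+|\xi_2'|^2$; the collapsing rule $x_2'=x_2$ followed by integration against $V_N(x_1-x_2)$ translates in Fourier to a linear identification of the duals of $x_2$ and $x_2'$ together with multiplication by $\hat V_N(\xi_1-\xi_1')$, and the pointwise bound $|\hat V_N|\le\n{V}_{L^1}$ extracts the $\n{V}_{L^1}$ factor. Taking the squared $L^2_t$ norm and applying Plancherel produces a delta distribution $\delta(\Phi-\tau)$ constraining the integration variables to a hypersurface, independently for each fixed outer frequency.

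Finally, after Cauchy--Schwarz, the inequality reduces to the measure estimate
\[
\sup_{\xi_1,\xi_1',\tau}\int\frac{\delta\bigl(\Phi(\xi_1,\xi_1',\xi_2,\xi_2')-\tau\bigr)}{\langle\xi_2\rangle^{2}\langle\xi_2'\rangle^{2}}\,d\xi_2\,d\xi_2'<\infty.
\]
I expect this last step to be the main obstacle, as it is precisely the critical Klainerman--Machedon collapsing estimate in dimension three. In $d=3$, the level set $\{\Phi=\tau\}$ restricts to a sphere (after eliminating one variable via the identification induced by the collapse), and the $\langle\xi\rangle^{-2}$ weights provide just enough decay to close the spherical-measure integral uniformly in $(\xi_1,\xi_1',\tau)$; any weakening of either the Sobolev weight or the dimension would fail. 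The detailed spherical calculation is the content of \cite{KM08,KSS11,CP14,Che13}, and I would simply appeal to that computation to complete the argument.
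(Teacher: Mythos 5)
This lemma is cited from \cite{CP14,Che13,KM08}; the paper itself gives no proof, so there is no in-paper argument to compare against. Your outline is the standard route to those cited collapsing estimates, and it is sound at the level of a sketch: the reduction to $j=1$ by symmetry is fine, and pulling the spectator factors $U_l^{(1)}$, $l=2,\dots,k$, out of the $L^2_t L^2_{x,x'}$ norm is legitimate because each one is a $t$-dependent Fourier multiplier in $(x_l,x_l')$ alone, hence commutes with $\langle\nabla_{x_l}\rangle\langle\nabla_{x_l'}\rangle$, acts on variables disjoint from those touched by $B_{N,1,k+1}^{\pm}$, and is unitary on the spatial $L^2$ for each fixed $t$; Fubini in the remaining inert variables then collapses matters to the two-body case.

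Two bookkeeping slips are worth flagging, even though you ultimately (and reasonably) defer the hard measure computation to the references. First, your reduced inequality drops the surviving weight $\langle\nabla_{x_1}\rangle\langle\nabla_{x_1'}\rangle$ on the left; it must remain, since $S^{(1,k)}$ contributes it and it does not cancel. Second, the Fourier transform of $B_{N,1,2}^{+}$ does not produce the multiplier $\hat V_N(\xi_1-\xi_1')$. Writing things out, the collapse $x_2'=x_2$ and the convolution with $V_N(x_1-x_2)$ give a constraint of the form $\mu_1=\xi_1+\xi_2-\xi_2'$, $\mu_1'=\xi_1'$, with the multiplier $\hat V_N(\xi_2-\xi_2')$. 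Consequently the reduced measure estimate carries a transferred weight $\langle\mu_1\rangle^2/\bigl(\langle\mu_1-\xi_2+\xi_2'\rangle^2\langle\xi_2\rangle^2\langle\xi_2'\rangle^2\bigr)$ rather than the bare $\langle\xi_2\rangle^{-2}\langle\xi_2'\rangle^{-2}$ you wrote; the shifted factor $\langle\mu_1-\xi_2+\xi_2'\rangle^{-2}$ is exactly what makes the supremum finite in $d=3$ and is the crux of the Klainerman--Machedon computation. None of this breaks your plan, since the endgame is the citation, but as written the reduced estimate is not the one proved in \cite{KM08,CP14,Che13} and would not close without the shifted weight.
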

\begin{lemma} \label{lemma:collapsing estimate d=3, propogator version}
Let $d\leq 3$ and $\al=d+1/2$. Then we have
\begin{align}
\n{S_{\hbar}^{(1,k)}B_{N,\hbar,j,k+1}^{\pm}U_{\hbar}^{(k+1)}f^{(k+1)}}_{L_{t}^{2}L_{x,x'}^{2}}\leq
\frac{C\n{V}_{L^{1}}}{h^{\al}}\n{S_{\hbar}^{(1,k+1)}f^{(k+1)}}_{L_{x,x'}^{2}}
\end{align}
\end{lemma}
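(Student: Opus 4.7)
The plan is to reduce the $\hbar$-version directly to the known $\hbar = 1$ collapsing estimate via the semiclassical scaling $(t,x) \mapsto (\hbar \tilde t, \hbar \tilde x)$. For any function $g(t, \textbf{x}_k, \textbf{x}_k')$ set $\tilde g(\tilde t, \tilde{\textbf{x}}_k, \tilde{\textbf{x}}_k') = g(\hbar \tilde t, \hbar \tilde{\textbf{x}}_k, \hbar \tilde{\textbf{x}}_k')$. Under this scaling, $\hbar \nabla_{x_j}$ becomes $\nabla_{\tilde x_j}$ (so $S_\hbar^{(1,k)}$ turns into $S^{(1,k)}$), and $t\hbar \Delta_{x_j}/2$ becomes $\tilde t \Delta_{\tilde x_j}/2$, which turns the propagator $U_\hbar^{(k+1)}$ into the unit-$\hbar$ propagator $U^{(k+1)}$. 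This is the key observation: the scaling is exactly designed to remove $\hbar$ from the dispersive part.

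Next I would track the scaling of the collision operator. Since $dx_{k+1} = \hbar^d \, d\tilde x_{k+1}$ and $B_{N,\hbar, j, k+1}^{\pm}$ carries a prefactor $1/\hbar$, one gets
\begin{equation*}
\bigl(B_{N,\hbar,j,k+1}^{\pm} U_\hbar^{(k+1)} f^{(k+1)}\bigr)(t, \textbf{x}_k, \textbf{x}_k')
= \hbar^{d-1} \bigl(\widetilde B_{j,k+1}^{\pm} U^{(k+1)} \tilde f^{(k+1)}\bigr)(\tilde t, \tilde{\textbf{x}}_k, \tilde{\textbf{x}}_k'),
\end{equation*}
where $\widetilde B_{j,k+1}^{\pm}$ is the usual collision operator with the rescaled potential $\widetilde V(y) := V_N(\hbar y)$. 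Combined with the first step,
\begin{equation*}
\bigl(S_\hbar^{(1,k)} B_{N,\hbar,j,k+1}^{\pm} U_\hbar^{(k+1)} f^{(k+1)}\bigr)(\hbar \tilde t, \hbar \tilde{\textbf{x}}_k, \hbar \tilde{\textbf{x}}_k')
= \hbar^{d-1} \bigl(S^{(1,k)} \widetilde B_{j,k+1}^{\pm} U^{(k+1)} \tilde f^{(k+1)}\bigr)(\tilde t, \tilde{\textbf{x}}_k, \tilde{\textbf{x}}_k').
\end{equation*}

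Now I apply the $\hbar = 1$ collapsing estimate $(\ref{equ:collapsing estimate d=3, propogator version,h=1})$ with potential $\widetilde V$, and collect the Jacobians: the $L^2_{\tilde t} L^2_{\tilde{\textbf x}, \tilde{\textbf x}'}$ norm on the left picks up $\hbar^{-(1/2 + dk)}$ when passed back to $L^2_t L^2_{\textbf x, \textbf x'}$, the $L^2_{\tilde{\textbf x}, \tilde{\textbf x}'}$ norm on the right picks up $\hbar^{d(k+1)}$, and $\n{\widetilde V}_{L^1} = \hbar^{-d} \n{V_N}_{L^1} = \hbar^{-d} \n{V}_{L^1}$ (the last equality since $V_N$ has the same $L^1$ norm as $V$). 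Combining the $\hbar^{d-1}$ from the collision operator with these gives an overall factor
\begin{equation*}
\hbar^{\,(d-1) + (1/2 + dk) - d(k+1) - d} = \hbar^{-(d + 1/2)} = \hbar^{-\alpha}.
\end{equation*}
This is exactly the claimed exponent, completing the argument.

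The only real obstacle is the bookkeeping of Jacobians and scaling exponents; there is no new analytic input beyond the already-stated $\hbar = 1$ estimate. One sanity check that pins down the correct choice of scaling is that both $t$ and $x$ must scale by the same factor $\hbar$ so that $t \hbar \Delta_x$ is invariant, which is what produces the $\hbar^{1/2}$ from the $L^2_t$ norm; together with the $\hbar^{d-1}$ coming from $B_{N,\hbar}$ and the dimensional mismatch between $S^{(1,k)}$ and $S^{(1,k+1)}$, this is precisely what yields $\alpha = d + 1/2$.
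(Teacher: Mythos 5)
Your argument is correct and is essentially the same as the paper's: both rescale $(t,x)\mapsto(\hbar\tilde t,\hbar\tilde x)$ to turn $S_\hbar^{(1,k)}$, $U_\hbar$ into their $\hbar=1$ counterparts, invoke the $\hbar=1$ KM collapsing estimate, and track the Jacobians of the time integral, the space integrals, the collision operator, and the $L^1$ norm of the rescaled potential. The only cosmetic difference is in how the potential factors are grouped (the paper keeps the combination $\hbar^{d}V_N(\hbar\,\cdot)$, whose $L^1$ norm is exactly $\n{V}_{L^1}$, as a single unit, while you split off the $\hbar^{d-1}$ explicitly), but the bookkeeping lands on the same exponent $\alpha=d+1/2$.
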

\begin{proof}
Let us define
\begin{align}
\lrs{\delta_{x}^{a}f}(x)=f(ax),\quad \lrs{\delta_{t}^{a}f}(t)=f(at).
\end{align}
By scaling,
\begin{align}
&\bbn{S_{\hbar}^{(1,k)}\operatorname{Tr}_{k+1}\lrs{
V_{N,\hbar}(x_{j}-x_{k+1})U_{\hbar}^{(k+1)}(t)f^{(k+1)}}}_{L_{t}^{2}L_{x,x'}^{2}}\\
=&\hbar^{kd+\frac{1}{2}}\bbn{\delta_{t}^{\hbar}\delta_{x}^{\hbar}\lrc{S_{\hbar}^{(1,k)}\operatorname{Tr}_{k+1}\lrs{
V_{N,\hbar}(x_{j}-x_{k+1})U_{\hbar}^{(k+1)}(t)f^{(k+1)}}}}_{L_{t}^{2}L_{x,x'}^{2}}\notag
\end{align}
Noting that $V_{N,\hbar}$ carries $\hbar^{-1}$,
\begin{align*}
=&\hbar^{kd-\frac{1}{2}}\bbn{S^{(1,k)}
\operatorname{Tr}_{k+1}\lrs{\hbar^{d}V_{N}(\hbar(x_{j}-x_{k+1}))U^{(k+1)}(t)
(\delta_{x}^{\hbar}\lrc{f^{(k+1)}}}}_{L_{t}^{2}L_{x,x'}^{2}}
\end{align*}
By estimate $(\ref{equ:collapsing estimate d=3, propogator version,h=1})$,
\begin{align*}
\leq&\hbar^{kd-\frac{1}{2}}C\n{\hbar^{d}V_{N}(\hbar x)}_{L^{1}}\n{S^{(1,k+1)}\delta_{x}^{\hbar}\lrc{f^{(k+1)}}}_{L_{x,x'}^{2}}\\
=&\frac{C\n{V}_{L^{1}}}{\hbar^{d+\frac{1}{2}}}
\bbn{S_{\hbar}^{(1,k+1)}f^{(k+1)}}_{L_{x,x'}^{2}}
\end{align*}
which completes the proof.
\end{proof}

\begin{lemma}[\cite{CH16on}, Lemmas 4.1, 4.3, and 4.6]\footnote{These are $X_{s,b}$ estimates in disguise. As we are not using the $X_{s,b}$ spaces directly in this paper, we will not go into the details.}
Let $\theta$ and $\wt{\theta}$ are cutoff functions supported $[-1,1]$ and $\wt{\theta}_{T}(t)=\wt{\theta}(t/T)$.
For the case $\hbar=1$,
we have
\begin{align}\label{equ:collapsing estimate for dp,last step,scaling,j=0,h=1}
&\bbn{S^{(1,k)}\theta(t_{k})\int_{0}^{t_{k}}U^{(k)}(t_{k}-t_{k+1})V_{N}(x_{1}-x_{2})\wt{\theta}_{T}(t_{k+1})\ga_{N}^{(k)}(t_{k+1})
dt_{k+1}}_{L_{t_{k}}^{\infty}L_{x,x'}^{2}}\\
\leq&N^{\frac{5}{2}\be}C_{V}C_{\theta}\bbn{S^{(1,k)}\wt{\theta}_{T}(t_{k+1})\ga_{N}^{(k)}
(t_{k+1})}_{L_{t_{k+1}}^{2}L_{x,x'}^{2}}\notag
\end{align}
and
\begin{align}\label{equ:collapsing estimate for dp,last step,scaling,j>=1,h=1}
&\bbn{S^{(1,k+j-1)}B_{N,1,k+j}\theta(t_{k+j})\int_{0}^{t_{k+j}}U^{(k+j)}(t_{k+j}-t_{k+j+1})V_{N,12}
\wt{\theta}_{T}\ga_{N}^{(k+j)}(t_{k+j+1})dt_{k+j+1}}_{L_{t_{k+j}}^{2}L_{x,x'}^{2}}\\
\leq& N^{\frac{5}{2}\be}C_{V}C_{\theta}\bbn{S^{(1,k+j)}
\wt{\theta}_{T}(t_{k+j+1})\ga_{N}^{(k+j)}(t_{k+j+1})}_{L_{t_{k+j+1}}^{2}L_{x,x'}^{2}}\notag
\end{align}
where
$V_{N,12}=N^{d\be}V(N^{\be}(x_{1}-x_{2}))$ and
\begin{align*}
C_{\theta}=|Supp(\theta)|\lrs{\n{\theta}_{L_{t}^{2}}+
\n{\theta'}_{L_{t}^{\frac{4}{3}}}}+\n{\theta}_{L_{t}^{\frac{4}{3}}}
+\n{\lra{\nabla_{t}}^{\frac{3}{4}}\theta}_{L_{t}^{2}}+\n{\theta}_{L_{t}^{\infty}}
\end{align*}
with $|Supp(\theta)|$ denoting the Lebesgue measure of the support of $\theta$.

\end{lemma}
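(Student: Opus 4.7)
The plan is to lift everything into the $X^{s,b}$ framework adapted to the $k$-fold product Schr\"odinger propagator $U^{(k)}$ and then invoke the bilinear machinery of \cite{Che13,CH16on}. As the footnote acknowledges, these inequalities are $X^{s,b}$ estimates in disguise; their role in the paper is to package that NLS-theoretic technology as ready-to-use $L^p_t$ statements about marginals $\gamma_N^{(k)}$, so that the main hierarchy argument can proceed without ever writing down an $X^{s,b}$ norm explicitly.

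First, define the hierarchy $X^{s,b}$ space
\[
\|f\|_{X^{s,b}_k} = \bbn{\prod_{j=1}^k\lra{\xi_j}^s\lra{\xi_j'}^s \lra{\tau + \tfrac12(|\xi|^2 - |\xi'|^2)}^b \widehat{f}}_{L^2_{\tau,\xi,\xi'}}.
\]
For $b = 1/2^+$ one has three standard features: (i) the embedding $X^{s,b}_k \hookrightarrow C_t H^s_{x,x'}$, which will upgrade a frequency-side bound to the $L^\infty_{t_k} L^2_{x,x'}$ norm in (A.2); (ii) the dual embedding $L^2_t H^s \hookrightarrow X^{s,b-1}_k$ and the Duhamel mapping property $X^{s,b-1}_k \to X^{s,b}_k$; and (iii) a sharp-cutoff lemma that lets $\theta$ and $\tilde\theta_T$ multiply $X^{s,b}_k$-elements at the cost of an $L^p_t$ or $H^{3/4}_t$ norm of the cutoff plus a small $T^{0+}$ factor --- this is exactly where the unusual combination of norms inside $C_\theta$ (the $L^{4/3}_t$-norms of $\theta$ and $\theta'$, and the $H^{3/4}_t$-norm of $\theta$) enters. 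With these three tools in place, both (A.2) and (A.3) reduce to a single bilinear collision estimate of the schematic form
\[
\bbn{S^{(1,k)}\, V_N(x_1-x_2)\, f^{(k)}}_{X^{0,-1/2^+}_k} \lesssim N^{\frac{5}{2}\be} C_V \bbn{S^{(1,k)} f^{(k)}}_{X^{0,0}_k},
\]
with (A.3) requiring one additional composition with the KM collapsing estimate of Lemma A.2 to absorb the outer $B_{N,1,k+j}$. This bilinear inequality is in turn equivalent, after Plancherel and a dyadic Littlewood--Paley decomposition, to a trilinear frequency-space oscillatory estimate, which is handled by a Loomis--Whitney / Strichartz computation.

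The main technical obstacle is the sharpness of the loss $N^{\frac{5}{2}\be}$ in the bilinear bound. Heuristically the exponent is forced: $\|V_N\|_{L^2} \sim N^{\frac{3}{2}\be}$ and each of the two derivatives in $S^{(1,k)}$ that must be commuted through $V_N$ costs a further $N^{\be/2}$, producing the critical exponent $\frac{5}{2}\be$. Making this rigorous requires exploiting the dispersive gain from the Schr\"odinger symbol $\tau + \tfrac12|\xi|^2$ to beat the naive $H^1$-level scaling --- a naive placement of $V_N$ together with its derivatives in the obvious $L^p$ spaces overcounts and would give a strictly larger loss. It is precisely the sharpness of this trilinear estimate that governs the restriction $\be < 2/5$ in Theorem \ref{thm:main theorem,bbgky-to-euler}, since any slack would propagate through the iteration scheme of Sections \ref{section:A Klainerman-Machedon Bound 1st}--\ref{section:Convergence Rate for Every Finite Time} and shrink the allowable range of $\be$.
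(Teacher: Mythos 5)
The paper does not prove this lemma: it is imported verbatim from \cite{CH16on} (Lemmas 4.1, 4.3, 4.6), and the footnote you are citing from is the only discussion the paper offers --- the authors deliberately black-box it precisely because re-deriving it would mean transplanting a chapter of $X^{s,b}$ theory that plays no other role in the argument. So there is no in-paper proof to compare against; the honest comparison is with \cite{CH16on}.

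With that caveat, your sketch does correctly identify the framework the footnote alludes to. The three ingredients you list --- the embedding $X^{0,b}_k\hookrightarrow C_tL^2_{x,x'}$, the Duhamel mapping $X^{0,b-1}_k\to X^{0,b}_k$, and a time-cutoff multiplier lemma that explains the odd mixture of $\|\theta\|_{L^{4/3}_t}$, $\|\theta'\|_{L^{4/3}_t}$ and $\|\langle\nabla_t\rangle^{3/4}\theta\|_{L^2_t}$ inside $C_\theta$ --- are indeed what drive the arguments in \cite{CH16on}, and the observation that (\ref{equ:collapsing estimate for dp,last step,scaling,j>=1,h=1}) needs one extra composition with the collapsing estimate to absorb the outer $B_{N,1,k+j}$ is right.

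But as a \emph{proof} there is still a genuine gap: the entire analytic content is the bilinear estimate
\begin{align*}
\bbn{S^{(1,k)}\,V_N(x_1-x_2)\,f^{(k)}}_{X^{0,-1/2^+}_k}\lesssim N^{\frac{5}{2}\beta}C_V\,\bbn{S^{(1,k)}f^{(k)}}_{X^{0,0}_k},
\end{align*}
and you only assert it and gesture at a Loomis--Whitney/Strichartz computation. That is where all the difficulty lives, so the proposal is a plan for a proof rather than a proof. Two smaller points. The heuristic you give for $N^{\frac{5}{2}\beta}$ (an $L^2$ of $V_N$ plus $N^{\beta/2}$ per commuted derivative) is ad hoc; the transparent scaling heuristic in $d=3$ is simply $\|\nabla V_N\|_{L^2}=N^{\frac{5}{2}\beta}\|\nabla V\|_{L^2}$: one derivative costs $N^\beta$, an $L^2$ norm of an $N^{3\beta}$-scaled bump costs $N^{\frac{3}{2}\beta}$. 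And the assertion that the restriction $\beta<2/5$ is governed by the sharpness of this trilinear estimate overstates the role of sharpness. The constraint comes from the budget: the mean-field $\frac1N$ prefactor in $V_{N,\hbar}^{(k)}$ must beat the $N^{\frac{5}{2}\beta}$ loss, producing the $N^{\frac{5}{2}\beta-1}$ decay visible in Lemma \ref{lemma:collapsing estimate for dp,ep,ip,last step} and condition (c), and $\frac{5}{2}\beta-1<0$ is exactly $\beta<\tfrac{2}{5}$. Sharpness of the bilinear estimate controls how close to that threshold the argument can get, but it is the budget that sets the threshold.
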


\begin{lemma}\label{lemma:collapsing estimate for dp,last step,scaling}
For $j\geq 0$ and $k\geq 1$, we have the following estimates
\begin{align} \label{equ:collapsing estimate for dp,last step,scaling,j=0}
&\bbn{S_{\hbar}^{(1,k)}\int_{0}^{t_{k}}U_{\hbar}^{(k)}(t_{k}-t_{k+1})V_{N,\hbar}^{(k)}\ga_{N,\hbar}^{(k)}(t_{k+1})
dt_{k+1}}_{L_{t_{k}}^{\infty}[0,T]L_{x,x'}^{2}}\\
\leq& N^{\frac{5}{2}\be-1}\hbar(C_{V}\hbar^{-\al}T^{1/2})k^{2}\n{S_{\hbar}^{(1,k)}
\ga_{N,\hbar}^{(k)}(t_{k+1})}_{L_{t_{k+1}}^{\wq}L_{x,x'}^{2}}\notag
\end{align}
and
\begin{align}\label{equ:collapsing estimate for dp,last step,scaling,j>=0}
&\bbn{S_{\hbar}^{(1,k+j-1)}B_{N,\hbar,1,k+j}\int_{0}^{t_{k+j}}
U_{\hbar}^{(k+j)}(t_{k+j}-t_{k+j+1})V_{N,\hbar}^{(k+j)}
\ga_{N,\hbar}^{(k+j)}(t_{k+j+1})dt_{k+j+1}}_{L_{t_{k+j}}^{1}[0,T]L_{x,x'}^{2}}\\
\leq&N^{\frac{5}{2}\be-1}\hbar(C_{V}\hbar^{-\al}T^{1/2})^{2}(k+j)^{2}\bbn{S_{\hbar}^{(1,k+j)}
\ga_{N,\hbar}^{(k+j)}(t_{k+j+1})}_{L_{t_{k+j+1}}^{\infty}L_{x,x'}^{2}}.\notag
\end{align}
\end{lemma}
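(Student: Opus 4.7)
The plan is to reduce both estimates to the $\hbar=1$ versions $(\ref{equ:collapsing estimate for dp,last step,scaling,j=0,h=1})$ and $(\ref{equ:collapsing estimate for dp,last step,scaling,j>=1,h=1})$ already recorded from \cite{CH16on}, by exactly the space-time rescaling $t\mapsto \hbar t$, $x_j\mapsto \hbar x_j$ used in the proof of Lemma \ref{lemma:collapsing estimate d=3, propogator version}. This rescaling intertwines $U_\hbar^{(k)}$ with the unit propagator, converts each weight $\langle\hbar\nabla\rangle$ in $S_\hbar^{(1,k)}$ into $\langle\nabla\rangle$, and together with the identity $V_{N,\hbar}=\hbar^{-1}V_N$ transports the $\hbar=1$ estimates into the $\hbar$-dependent setting. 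The role played in Lemma \ref{lemma:collapsing estimate d=3, propogator version} by a single pair $V_{N,\hbar}(x_j-x_{k+1})$ is played here by the full sum $V_{N,\hbar}^{(k)}=\frac{1}{N}\sum_{1\leq i<j\leq k}[V_{N,\hbar}(x_i-x_j),\cdot]$, which consists of $\binom{k}{2}$ identically sized pair terms; by bosonic symmetry of $\ga_{N,\hbar}^{(k)}$ all pairs contribute equally, so the triangle inequality reduces matters to the single pair $(1,2)$ weighted by $\frac{1}{N}\binom{k}{2}=O(k^2/N)$. This produces the $N^{5\beta/2-1}$ and $k^2$ factors on the right-hand side.

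For $(\ref{equ:collapsing estimate for dp,last step,scaling,j=0})$, I would first insert a smooth cutoff $\wt\theta_T$ in the $t_{k+1}$-integration to match the template of $(\ref{equ:collapsing estimate for dp,last step,scaling,j=0,h=1})$, then reduce to the pair $(1,2)$ by the symmetry argument above, and run the rescaling verbatim. The $\hbar=1$ estimate bounds the rescaled left-hand side by $N^{5\beta/2}C_VC_\theta$ times an $L^2_{t_{k+1}}$ norm. Unrolling the Jacobians (spatial $\hbar^{kd}$ on each side of the density matrix, temporal $\hbar$ from $\int dt_{k+1}$, together with the explicit $\hbar^{-1}$ in $V_{N,\hbar}$) restores the weights $S_\hbar^{(1,k)}$ on the right-hand side and produces the net prefactor $\hbar\cdot\hbar^{-\al}$ with $\al=d+\tfrac{1}{2}$. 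A final Cauchy-Schwarz in $t_{k+1}\in[0,T]$ trades $L^2_{t_{k+1}}$ for $T^{1/2}\|\cdot\|_{L^\infty_{t_{k+1}}}$, completing the bound.

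The proof of $(\ref{equ:collapsing estimate for dp,last step,scaling,j>=0})$ is entirely parallel, with $(\ref{equ:collapsing estimate for dp,last step,scaling,j>=1,h=1})$ as the $\hbar=1$ input. The outer collapsing operator $B_{N,\hbar,1,k+j}$ is already present in that estimate, so the same rescaling now contributes one extra factor of $\hbar^{-\al}$, producing the $(C_V\hbar^{-\al})^2$ appearing in the bound; the inner $V_{N,\hbar}^{(k+j)}$ again yields the combinatorial $O((k+j)^2/N)$, and one Cauchy-Schwarz in $t_{k+j}\in[0,T]$ supplies the single factor $T^{1/2}$.

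The main (and essentially only) difficulty will be the careful bookkeeping of $\hbar$-powers: four independent sources---the spatial and temporal Jacobians on the $L^2$ norms, the $\langle\hbar\nabla\rangle\mapsto\langle\nabla\rangle$ conversion inside $S_\hbar^{(1,k)}$, the explicit $\hbar^{-1}$ in $V_{N,\hbar}$, and the Jacobian from the outer $\int dt_{k+1}$---must reassemble to yield exactly $\hbar^{1-\al}$ for the first estimate and $\hbar^{1-2\al}$ for the second. This accounting is already done in miniature in the proof of Lemma \ref{lemma:collapsing estimate d=3, propogator version}, and no new analytic input beyond \cite{CH16on} is needed.
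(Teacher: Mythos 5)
Your strategy is the same as the paper's: insert the cutoffs $\theta,\wt\theta_T$, use bosonic symmetry to reduce $V_{N,\hbar}^{(k)}$ to a single pair with the $\binom{k}{2}/N$ multiplicity, and then run the $\delta_t^\hbar\delta_x^\hbar$ rescaling to transport the $\hbar=1$ input from \cite{CH16on}. The final step the paper uses is exactly the trivial bound $\|\cdot\|_{L^2_{t_{k+1}}[0,T]}\le T^{1/2}\|\cdot\|_{L^\infty_{t_{k+1}}}$ you describe.

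However, your enumeration of the sources of $\hbar$-powers is incomplete, and the remark that ``this accounting is already done in miniature in the proof of Lemma \ref{lemma:collapsing estimate d=3, propogator version}'' is misleading, because two features of the present estimate have no analogue there. First, Lemma \ref{lemma:collapsing estimate d=3, propogator version} relies on $\|V\|_{L^1}$, which is scale-invariant, so the rescaled potential costs nothing; here, $(\ref{equ:collapsing estimate for dp,last step,scaling,j=0,h=1})$ produces the \emph{non}-scale-invariant factor $N^{5\be/2}$, and after the substitution $x\mapsto\hbar x$ the potential one feeds into the $\hbar=1$ estimate is $V_{N\hbar,12}=(N^\be\hbar)^dV(N^\be\hbar(x_1-x_2))$, so the estimate returns $(N^\be\hbar)^{5/2}$ rather than $N^{5\be/2}$ — an extra $\hbar^{5/2}$. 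Second, the $\hbar=1$ estimates carry the cutoff constant $C_\theta$, and after rescaling this becomes $C_{\delta_t^\hbar\theta}$, for which the relevant bound is $\hbar^{3/2}C_{\delta_t^\hbar\theta}\le C$, i.e.\ an extra $\hbar^{-3/2}$ driven mainly by $|\mathrm{Supp}(\delta_t^\hbar\theta)|\sim\hbar^{-1}$. Your ``four sources'' (spatial/temporal Jacobians, $\langle\hbar\nabla\rangle\mapsto\langle\nabla\rangle$, the $\hbar^{-1}$ in $V_{N,\hbar}$, the $\int dt_{k+1}$ Jacobian) account for $\hbar^{-1/2}$ only; together with the pointwise scaling $V_N(\hbar x)=\hbar^{-d}V_{N\hbar}(x)$ and the two factors above, the total is $\hbar^{-d+5/2-3/2-1/2}=\hbar^{1/2-d}=\hbar^{1-\al}$, which is what the lemma asserts. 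So the plan is sound and will close, but only once these two additional $\hbar$-sources are tracked; without them the power count does not reproduce $\hbar^{1-\al}$ (respectively $\hbar^{1-2\al}$ in $(\ref{equ:collapsing estimate for dp,last step,scaling,j>=0})$).
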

\begin{proof}
For $(\ref{equ:collapsing estimate for dp,last step,scaling,j=0})$, we have
\begin{align}
&\bbn{S_{\hbar}^{(1,k)}\int_{0}^{t_{k}}U_{\hbar}^{(k)}(t_{k}-t_{k+1})V_{N,\hbar}^{(k)}\ga_{N,\hbar}^{(k)}(t_{k+1})
dt_{k+1}}_{L_{t_{k}}^{\infty}[0,T]L_{x,x'}^{2}}\\
\leq&\bbn{S_{\hbar}^{(1,k)}\theta(t_{k})\int_{0}^{t_{k}}U_{\hbar}^{(k)}(t_{k}-t_{k+1})V_{N,12}\wt{\theta}_{T}(t_{k+1})\ga_{N,\hbar}^{(k)}(t_{k+1})
dt_{k+1}}_{L_{t_{k}}^{\infty}L_{x,x'}^{2}},\notag
\end{align}
where $\theta$ and $\wt{\theta}$ are cutoff functions supported $[-1,1]$ and $\wt{\theta}_{T}(t)=\wt{\theta}(t/T)$. For simplicity, we set
$$V_{N\hbar,12}=(N^{\be}\hbar)^{d}V(N^{\be}\hbar(x_{1}-x_{2})).$$
Then by scaling argument, we arrive at
\begin{align}
&\bbn{S_{\hbar}^{(1,k)}\theta(t_{k})\int_{0}^{t_{k}}U_{\hbar}^{(k)}(t_{k}-t_{k+1})V_{N,12}\wt{\theta}_{T}(t_{k+1})\ga_{N,\hbar}^{(k)}(t_{k+1})
dt_{k+1}}_{L_{t_{k}}^{\infty}L_{x,x'}^{2}}\\
=&\hbar^{kd}\bbn{\delta_{x}^{\hbar}\delta_{t}^{\hbar}\lrc{S_{\hbar}^{(1,k)}\theta(t_{k})\int_{0}^{t_{k}}
U_{\hbar}^{(k)}(t_{k}-t_{k+1})V_{N,12}\wt{\theta}_{T}(t_{k+1})\ga_{N,\hbar}^{(k)}(t_{k+1})
dt_{k+1}}}_{L_{t_{k}}^{\infty}L_{x,x'}^{2}}\notag\\
=&\hbar\hbar^{kd}\bbn{S^{(1,k)}\lrs{\delta_{t}^{\hbar}\theta}(t_{k})\int_{0}^{t_{k}}U^{(k)}(t_{k}-t_{k+1})
\delta_{x}^{\hbar}\lrc{V_{N,12}
\delta_{t}^{\hbar}\lrc{\wt{\theta}_{T}(t_{k+1})\ga_{N,\hbar}^{(k)}(t_{k+1})}}
dt_{k+1}}_{L_{t_{k}}^{\infty}L_{x,x'}^{2}}\notag\\
=&\frac{\hbar\hbar^{kd}}{\hbar^{d}}\bbn{S^{(1,k)}\lrs{\delta_{t}^{\hbar}\theta}(t_{k})\int_{0}^{t_{k}}U^{(k)}(t_{k}-t_{k+1})
V_{N\hbar,12}
\delta_{x}^{\hbar}\delta_{t}^{\hbar}\lrc{\wt{\theta}_{T}(t_{k+1})\ga_{N,\hbar}^{(k)}(t_{k+1})}
dt_{k+1}}_{L_{t_{k}}^{\infty}L_{x,x'}^{2}}\notag
\end{align}
By using estimate $(\ref{equ:collapsing estimate for dp,last step,scaling,j=0,h=1})$,
\begin{align*}
\leq& \frac{\hbar\hbar^{kd}(N^{\be}\hbar)^{\frac{5}{2}}C_{V}C_{\delta_{t}^{\hbar}\theta}}{\hbar^{d}}\bbn{S^{(1,k)}
\delta_{x}^{\hbar}\delta_{t}^{\hbar}\lrc{\wt{\theta}_{T}(t_{k+1})\ga_{N,\hbar}^{(k)}
(t_{k+1})}}_{L_{t_{k+1}}^{2}L_{x,x'}^{2}}\\
=& \frac{\hbar(N^{\be}\hbar)^{\frac{5}{2}}C_{V}C_{\delta_{t}^{\hbar}\theta}}{\hbar^{d}\hbar^{1/2}}\bbn{S^{(1,k)}
\wt{\theta}_{T}(t_{k+1})\ga_{N,\hbar}^{(k)}
(t_{k+1})}_{L_{t_{k+1}}^{2}L_{x,x'}^{2}}
\end{align*}
By taking $L^{\infty}$ at $dt_{k+1}$ and using the estimate that $\hbar^{\frac{3}{2}} C_{\delta_{t}^{\hbar}\theta}\leq C$,
\begin{align*}
\leq & \frac{N^{\frac{5}{2}\be}\hbar^{2}C_{V}T^{1/2}}{\hbar^{d}\hbar^{1/2}}\bbn{S^{(1,k)}
\ga_{N,\hbar}^{(k)}
(t_{k+1})}_{L_{t_{k+1}}^{\infty}L_{x,x'}^{2}}.
\end{align*}
We note that the $N^{-1}$, $k^{2}$ and $\hbar^{-1}$ factors come from the expansion of $V_{N,\hbar}^{(k)}$ and then arrive at $(\ref{equ:collapsing estimate for dp,last step,scaling,j=0})$.

Next, we deal with $(\ref{equ:collapsing estimate for dp,last step,scaling,j>=0})$. With the help of estimate $(\ref{equ:collapsing estimate for dp,last step,scaling,j>=1,h=1})$, we can use scaling argument in the same way as above to arrive at $(\ref{equ:collapsing estimate for dp,last step,scaling,j>=0})$, where the $N^{-1}$,  $(k+j)^{2}$, and $\hbar^{-1}$ factors come from the expansion of $V_{N,\hbar}^{(k+j)}$ and another $\hbar^{-1}$ factor comes from $B_{N,\hbar,1,k+j}$.

\end{proof}

\begin{lemma}\label{lemma:collapsing estimate for ep,ip,last step}
For $j\geq 0$ and $k\geq 1$, we have
\begin{align}\label{equ:collapsing estimate for ep,ip,last step}
&\int_{[0,T]}\n{S_{\hbar}^{(1,k+j)}B_{N,\hbar,1,k+j+1}\ga_{N,\hbar}^{(k+j+1)}(t_{k+j+1})}_{L_{x,x'}^{2}}dt_{k+j+1}\\
 \leq &N^{\frac{5}{2}\be}\hbar^{2}T^{1/2}(C_{V}\hbar^{-\al}T^{1/2})\bbn{S_{\hbar}^{(1,k+j+1)}
\ga_{N,\hbar}^{(k+j+1)}(t_{k+j+1})}_{L_{t_{k+j+1}}^{\infty}L_{x,x'}^{2}}.\notag
\end{align}
\end{lemma}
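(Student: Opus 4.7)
The statement is essentially a one-step Strichartz/collapsing bound for $B_{N,\hbar,1,k+j+1}$ applied directly to $\ga_{N,\hbar}^{(k+j+1)}$, without an inner Duhamel integration. My plan is to follow the same rescaling template that drives the proof of Lemma~\ref{lemma:collapsing estimate for dp,last step,scaling}, dropping the Duhamel step. First I would apply Cauchy-Schwarz in $t_{k+j+1}$ over $[0,T]$, picking up the first factor $T^{1/2}$:
\[
\int_0^T \bn{S_\hbar^{(1,k+j)} B_{N,\hbar,1,k+j+1} \ga_{N,\hbar}^{(k+j+1)}(t)}_{L^2_{x,x'}} dt \leq T^{1/2} \bn{S_\hbar^{(1,k+j)} B_{N,\hbar,1,k+j+1} \ga_{N,\hbar}^{(k+j+1)}}_{L^2_t([0,T]) L^2_{x,x'}}.
\]
Next I would insert standard time cutoffs $\theta$ supported on $[-1,1]$ and $\wt\theta_T(\cdot)=\wt\theta(\cdot/T)$ equal to $1$ on $[0,T]$, upgrading the right-hand side to the fully cut-off norm $\|S_\hbar^{(1,k+j)} B_{N,\hbar,1,k+j+1} \theta\, \wt\theta_T \ga_{N,\hbar}^{(k+j+1)}\|_{L^2_t L^2_{x,x'}}$, matching the setup of the proof of Lemma~\ref{lemma:collapsing estimate for dp,last step,scaling}.

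Then I would perform the same scaling substitution $x\mapsto \hbar x$, $t\mapsto \hbar t$ used there, which converts $V_N(x_1 - x_{k+j+2})$ into the rescaled $(N^\be \hbar)^d V(N^\be \hbar \cdot)$, $S_\hbar^{(1,\cdot)}$ into $S^{(1,\cdot)}$, and accumulates $\hbar$ powers from the Jacobian in $x$ and $t$ together with the $\hbar^{-1}$ built into $B_{N,\hbar}$. At $\hbar=1$, the $X_{s,b}$-type Strichartz estimate underlying \cite[Lemmas 4.1, 4.3, 4.6]{CH16on} (used in the form $(\ref{equ:collapsing estimate for dp,last step,scaling,j>=1,h=1})$) yields, after dropping its inner Duhamel integration,
\[
\bn{S^{(1,k+j)} B_{N,1,k+j+1} \theta \wt\theta_T \ga_N^{(k+j+1)}}_{L^2_t L^2_{x,x'}} \leq N^{\frac{5}{2}\be} C_V C_\theta \bn{S^{(1,k+j+1)} \wt\theta_T \ga_N^{(k+j+1)}}_{L^2_t L^2_{x,x'}},
\]
which is the $B$-only analogue of $(\ref{equ:collapsing estimate for dp,last step,scaling,j>=1,h=1})$ and supplies the crucial $N^{\frac{5}{2}\be}$ scaling.

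Finally I would convert the remaining $L^2_t$ norm on the right to $L^\infty_t$ via $\|\wt\theta_T f\|_{L^2_t L^2_x} \leq T^{1/2} \|f\|_{L^\infty_t L^2_x}$, which supplies the second factor of $T^{1/2}$, and then undo the scaling. Bookkeeping the $\hbar$ powers exactly as in the proof of Lemma~\ref{lemma:collapsing estimate for dp,last step,scaling} -- one $\hbar^{-\al}$ ($\al = d+\tfrac{1}{2}$) extracted from the single rescaled collapsing step, the check $\hbar^{3/2} C_{\delta_t^\hbar \theta} \leq C$ absorbing the cutoff constant, and one extra $\hbar^{-1}$ from $B_{N,\hbar}$ itself -- produces the advertised prefactor $N^{\frac{5}{2}\be} \hbar^2 T^{1/2} (C_V \hbar^{-\al} T^{1/2})$. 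The main technical point will be the Strichartz inequality in the middle: the basic collapsing bound $(\ref{equ:collapsing estimate d=3, propogator version,h=1})$ demands a free-evolution structure $U^{(k+1)}f$, whereas here $B$ acts on the genuinely non-free density $\ga_{N,\hbar}^{(k+j+1)}$. Circumventing this is exactly the role of the time cutoff $\wt\theta_T$ together with the $X_{s,b}$ refinements from~\cite{CH16on}, and is the key ingredient needed beyond what is already in place in the proof of Lemma~\ref{lemma:collapsing estimate for dp,last step,scaling}.
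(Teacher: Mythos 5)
Your route has a genuine gap, and it is worth comparing against the paper's actual argument, which is considerably simpler.

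The paper does not use any Strichartz or $X_{s,b}$ machinery here. It observes that, by taking the $L_t^{\infty}$ norm on the right, it suffices to prove a \emph{pointwise-in-time} estimate
\begin{align*}
\n{S_{\hbar}^{(1,k+j)}B_{N,\hbar,1,k+j+1}\ga_{N,\hbar}^{(k+j+1)}(t)}_{L_{x,x'}^{2}}
 \leq N^{\frac{5}{2}\be}\hbar^{2}C_{V}\hbar^{-\al}\n{S_{\hbar}^{(1,k+j+1)}\ga_{N,\hbar}^{(k+j+1)}(t)}_{L_{x,x'}^{2}}
\end{align*}
after which integration over $[0,T]$ produces the factor $T=T^{1/2}\cdot T^{1/2}$ in the statement. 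The $\hbar=1$ version of this pointwise bound is a trace/Sobolev-type collapsing estimate (the $N^{\frac{5}{2}\be}$ loss is precisely the price paid for \emph{not} time-averaging --- it reflects a full Sobolev trace, as opposed to the $N$-independent constant in the KM Strichartz bound $(\ref{equ:collapsing estimate d=3, propogator version,h=1})$). The $\hbar$-dependent version then follows by the same scaling substitution used in Lemma $\ref{lemma:collapsing estimate for dp,last step,scaling}$. No time cutoffs, no Duhamel integral, and no free-evolution structure are needed, because nothing oscillatory is being exploited.

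The gap in your proposal is the inequality you label the ``$B$-only analogue'' of $(\ref{equ:collapsing estimate for dp,last step,scaling,j>=1,h=1})$, and the claim that it follows from the $X_{s,b}$ refinements of \cite{CH16on} ``after dropping the inner Duhamel integration.'' That step is not available. The estimates $(\ref{equ:collapsing estimate for dp,last step,scaling,j=0,h=1})$ and $(\ref{equ:collapsing estimate for dp,last step,scaling,j>=1,h=1})$ rely on the specific Duhamel structure $\int_0^{t}U(t-s)V_{N}(\cdot)\,ds$ acting on the density: the inner free propagator and the inner potential together are what allow the $X_{s,b}$ duality/transfer argument to go through, and removing the Duhamel integral destroys exactly the structure the argument exploits. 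What you would actually need after your Cauchy--Schwarz in time and cutoff insertion is nothing more than the pointwise-in-time trace estimate above, applied at each $t$ and then integrated; that is the paper's argument, and it renders your cutoff/$X_{s,b}$ machinery both unnecessary and, as invoked, unjustified. Concretely: you have correctly identified that $\ga_{N,\hbar}^{(k+j+1)}$ is not a free evolution, but the resolution is not to upgrade to a harder Strichartz-type bound --- it is to retreat to the weaker pointwise Sobolev bound and accept the $N^{\frac{5}{2}\be}$ loss, which the lemma statement has already priced in.
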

\begin{proof}
By taking $L^{\infty}$ at $dt_{k+j+1}$, it suffices to prove that
\begin{align}\label{equ:collapsing estimate for ep,ip,last step,L infty}
\n{S_{\hbar}^{(1,k+j)}B_{N,\hbar,1,k+j+1}\ga_{N,\hbar}^{(k+j+1)}(t_{k+j+1})}_{L_{x,x'}^{2}}
 \leq &N^{\frac{5}{2}\be}\hbar^{2}C_{V}\hbar^{-\al}\bbn{S_{\hbar}^{(1,k+j+1)}
\ga_{N,\hbar}^{(k+j+1)}}_{L_{x,x'}^{2}}.
\end{align}
For $\hbar =1$, we have
\begin{align}
\n{S^{(1,k+j)}B_{N,1,k+j+1}\ga_{N}^{(k+j+1)}(t_{k+j+1})}_{L_{x,x'}^{2}}
 \leq &N^{\frac{5}{2}\be}C_{V}\bbn{S^{(1,k+j+1)}
\ga_{N}^{(k+j+1)}}_{L_{x,x'}^{2}}.
\end{align}
By scaling, we arrive at $(\ref{equ:collapsing estimate for ep,ip,last step,L infty})$.
\end{proof}
\subsection{Convolution and Commutator Estimates}
\begin{lemma}[\cite{CH21quantitative}, Lemma A.5]\label{lemma:quantative estimate for identity approximation}
Let $W_{N}(x)=N^{d\be}V(N^{\be}x)-b_{0}\delta$, where $b_{0}=\int V(x)dx$. For any $0\leq s\leq 1$,
\begin{align}
\n{W_{N}*f}_{L^{p}}\lesssim N^{-\be s}\n{\lra{\nabla}^{s}f}_{L^{p}}
\end{align}
for any $1<p<\wq$. The implicit constant depends only on $\n{\lra{x}V(x)}_{L^{1}}$.
\end{lemma}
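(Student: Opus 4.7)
The strategy is to write $W_N * f$ in a form that exposes the mean-zero cancellation, prove two endpoint estimates, and then interpolate. Using $\int V_N = b_0$ so that $b_0\delta * f = b_0 f$, we get
\begin{align*}
W_N * f(x) = \int V_N(y)\bigl[f(x-y) - f(x)\bigr]\,dy,
\end{align*}
which is the only place where the cancellation in $W_N$ matters; all the smallness in $N$ will come from controlling the difference $f(\cdot-y) - f(\cdot)$ combined with the change of variables $z = N^\beta y$.

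At the $s=0$ endpoint, Minkowski's integral inequality and translation invariance of $L^p$ give
\begin{align*}
\|W_N * f\|_{L^p} \leq 2\|V_N\|_{L^1}\|f\|_{L^p} = 2\|V\|_{L^1}\|f\|_{L^p}.
\end{align*}
At the $s=1$ endpoint, I would use $f(x-y) - f(x) = -\int_0^1 y\cdot\nabla f(x-\theta y)\,d\theta$ and Minkowski to obtain $\|f(\cdot-y) - f(\cdot)\|_{L^p}\leq |y|\|\nabla f\|_{L^p}$, and then compute $\int |V_N(y)||y|\,dy = N^{-\beta}\||x|V\|_{L^1}$ by scaling. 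This yields
\begin{align*}
\|W_N * f\|_{L^p} \leq N^{-\beta} \||x|V\|_{L^1}\, \|\nabla f\|_{L^p} \leq N^{-\beta}\||x|V\|_{L^1}\, \|\langle\nabla\rangle f\|_{L^p}.
\end{align*}
Both constants are bounded by $\|\langle x\rangle V\|_{L^1} = \|V\|_{L^1} + \||x|V\|_{L^1}$, matching the stated dependence.

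For intermediate $0 < s < 1$, I would complex-interpolate the operator $T_N \colon f\mapsto W_N * f$ between the Bessel potential spaces $H^{0,p} = L^p$ and $H^{1,p}$, both with target $L^p$. Since $1<p<\infty$, the family $\{H^{s,p}\}$ equipped with the norm $\|\langle\nabla\rangle^s f\|_{L^p}$ is a complex interpolation scale (Calderón/Stein), producing $\|T_N\|_{H^{s,p}\to L^p}\lesssim N^{-\beta s}$, which is the desired estimate.

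The main technical point is the interpolation step; this is classical for $1<p<\infty$ via Stein's interpolation theorem applied to the analytic family of Fourier multipliers $\langle\xi\rangle^{iz}$ (Mikhlin bounds for each vertical strip). If one prefers a self-contained proof avoiding the abstract machinery, the same conclusion follows from a Littlewood–Paley decomposition $f = P_{\leq N^\beta}f + P_{>N^\beta}f$: apply the $s=1$ bound together with Bernstein's inequality $\|\nabla P_{\leq N^\beta}f\|_{L^p} \lesssim N^{\beta(1-s)}\|\langle\nabla\rangle^s f\|_{L^p}$ to the low-frequency piece, and the $s=0$ bound together with the frequency-localized estimate $\|P_{>N^\beta}f\|_{L^p}\lesssim N^{-\beta s}\|\langle\nabla\rangle^s f\|_{L^p}$ to the high-frequency piece.
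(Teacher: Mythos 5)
Your argument is correct. The paper itself offers no proof of this lemma -- it simply cites \cite{CH21quantitative}, Lemma A.5 -- so there is nothing to compare against here, but both routes you propose (endpoint bounds at $s=0,1$ followed by complex interpolation in the Bessel potential scale, or the equivalent Littlewood--Paley split at frequency $N^\beta$) are sound, and the mean-zero reformulation $W_N*f(x)=\int V_N(y)[f(x-y)-f(x)]\,dy$ is indeed the heart of the matter: the scaling identity $\int |V_N(y)|\,|y|\,dy=N^{-\beta}\||x|V\|_{L^1}$ is exactly where the $N^{-\beta}$ gain comes from. One small nit on constants: passing from $\|\nabla f\|_{L^p}$ to $\|\langle\nabla\rangle f\|_{L^p}$ at the $s=1$ endpoint is a Mikhlin multiplier bound, so it costs a $p,d$-dependent factor (write $\lesssim$, not $\le$); likewise the interpolation/Littlewood--Paley steps introduce $p,d$-dependent constants, so the lemma's phrase ``depends only on $\|\langle x\rangle V\|_{L^1}$'' should be read as emphasizing the dependence on $V$, not excluding $p$ and $d$. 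This matches how the estimate is actually used in the paper, where only the $N$- and $V$-dependence matter.
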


\begin{lemma}[Fractional Leibniz Rule]\label{lemma:leibniz rule}
\begin{align}
\n{\lra{\nabla}^{s}(fg)}_{L^{r}}\lesssim \n{\lra{\nabla}^{s}f}_{L^{p_{1}}}\n{g}_{L^{p_{2}}}+
\n{f}_{L^{q_{1}}}\n{\lra{\nabla}^{s}g}_{L^{q_{2}}}
\end{align}
where
\begin{align}
\frac{1}{r}=\frac{1}{p_{1}}+\frac{1}{p_{2}}=\frac{1}{q_{1}}+\frac{1}{q_{2}}
\end{align}
$r\in [1,\wq)$, $p_{i}$, $q_{i}\in (1,\infty]$, $s>0$.
\end{lemma}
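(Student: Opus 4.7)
The claim is the classical Kato--Ponce / Kenig--Ponce--Vega inequality, and my plan is a standard paraproduct argument combined with the Littlewood--Paley square function characterization
\[
\n{\lra{\nabla}^s h}_{L^r} \sim \bbn{\lrs{\sum_j 2^{2js}|\Delta_j h|^2}^{\frac{1}{2}}}_{L^r},\qquad 1<r<\wq.
\]
First I would apply Bony's decomposition
\[
fg = T_f g + T_g f + R(f,g),
\]
where $T_f g = \sum_{j\geq 2} S_{j-2}f\cdot \Delta_j g$ (low-high), $T_g f = \sum_{j\geq 2} S_{j-2}g\cdot \Delta_j f$ is symmetric, and $R(f,g) = \sum_{|j-k|\leq 1} \Delta_j f\cdot \Delta_k g$ is the resonant high-high term, with $\{\Delta_j\}$ the inhomogeneous Littlewood--Paley projectors and $S_j=\sum_{k<j}\Delta_k$. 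Each paraproduct is estimated separately and then combined.

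For $T_g f$, the key observation is that $S_{j-2}g \cdot \Delta_j f$ is frequency-localized in an annulus $\{|\xi|\sim 2^j\}$, so $\lra{\nabla}^s$ acts as multiplication by $\sim 2^{js}$ up to harmless constants. Pointwise $|S_{j-2}g|\lesssim Mg$ (the Hardy--Littlewood maximal function), so pulling $Mg$ outside by H\"older in $x$ and using the square function characterization yields
\[
\n{\lra{\nabla}^s T_g f}_{L^r} \lesssim \n{Mg}_{L^{p_2}}\n{\lra{\nabla}^s f}_{L^{p_1}} \lesssim \n{g}_{L^{p_2}}\n{\lra{\nabla}^s f}_{L^{p_1}};
\]
the maximal bound requires $p_2>1$, forcing the hypothesis, while for $p_2=\wq$ one replaces $\n{Mg}_{L^{\wq}}$ by the trivial $\n{g}_{L^{\wq}}$. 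The symmetric paraproduct $T_f g$ produces $\n{f}_{L^{q_1}}\n{\lra{\nabla}^s g}_{L^{q_2}}$ by the same argument.

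The main obstacle is the resonant piece $R(f,g)$: each product $\Delta_j f \cdot \Delta_k g$ with $|j-k|\leq 1$ has Fourier support only in a \emph{ball} of radius $\sim 2^j$, not an annulus, so $\lra{\nabla}^s$ does not sharply localize on a single dyadic block. I would dyadically decompose the output $\lra{\nabla}^s R(f,g) = \sum_m \Delta_m \lra{\nabla}^s R(f,g)$ and exploit that $\Delta_m$ only sees resonant pairs with $j\gtrsim m - O(1)$. Writing $2^{ms} = 2^{js}\cdot 2^{-(j-m)s}$ and summing the geometric tail in $(j-m)$ --- this is precisely where the hypothesis $s>0$ enters --- reduces matters to controlling
\[
\bbn{\lrs{\sum_j \lrs{2^{js}|\Delta_j f|}^2}^{\frac{1}{2}}\cdot Mg}_{L^r},
\]
which is dispatched by H\"older and the square function characterization and yields $\n{\lra{\nabla}^s f}_{L^{p_1}}\n{g}_{L^{p_2}}$ (or the symmetric bound, whichever is convenient).

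Finally, the inhomogeneous $\lra{\nabla}^s=(1-\Delta)^{s/2}$ lets the low-frequency block $\Delta_{-1}(fg)$ be absorbed trivially: $\n{\lra{\nabla}^s \Delta_{-1}(fg)}_{L^r}\lesssim \n{fg}_{L^r}\leq \n{f}_{L^{p_1}}\n{g}_{L^{p_2}}$ by H\"older, and the Bessel potential bound $\n{f}_{L^{p_1}}\lesssim \n{\lra{\nabla}^s f}_{L^{p_1}}$ closes it. Summing the three paraproduct pieces and the low-frequency remainder gives the claimed inequality. The $r=1$ endpoint (outside the usual range of the square function) would be handled either by working with Hardy spaces or, more concretely, by directly estimating each dyadic block in $L^1$ via H\"older in the physical variables before summation; beyond the resonant summation of Step 3, this endpoint is where I expect the remaining technical care to be needed.
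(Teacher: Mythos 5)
The paper gives no proof of this lemma: it is the classical Kato--Ponce (fractional Leibniz) inequality, stated as a known tool, and in fact it is only invoked once (in Lemma \ref{lemma:moment equation}) with $s=1$ and $r=3/2$. Your paraproduct sketch is the standard modern proof and is essentially correct for $1<r<\wq$. Two small elisions are worth flagging. First, after the geometric resummation in the resonant piece you land on something like $\bigl(\sum_j |M(2^{js}\Delta_j f\cdot \Delta_j g)|^2\bigr)^{1/2}$, and pulling $Mg$ out and replacing the inner maximal functions needs the Fefferman--Stein vector-valued maximal inequality, which you fold silently into ``the square function characterization''; it is valid exactly on $1<r<\wq$, the same range as the square function, so no harm, but it is a separate theorem. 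Second, when $p_1=\wq$ the equivalence $\n{(\sum_j 2^{2js}|\Delta_j f|^2)^{1/2}}_{L^{p_1}}\sim\n{\lra{\nabla}^s f}_{L^{p_1}}$ fails, so one must route the resonant term through the $(q_1,q_2)$ pair instead; you allow for this with ``whichever is convenient,'' but it should be said that the choice is forced, not merely convenient, at the $L^\wq$ endpoints.

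The one genuine gap is the $r=1$ endpoint, which the lemma as stated includes ($r\in[1,\wq)$). Your proposed remedy---``directly estimating each dyadic block in $L^1$ via H\"older before summation''---does not close on its own: the triangle inequality $\n{h}_{L^1}\le\sum_m\n{\Delta_m h}_{L^1}$ goes the right way, but the blockwise bounds you would produce do not resum to $\n{\lra{\nabla}^s f}_{L^{p_1}}\n{g}_{L^{p_2}}$ without a further device (a Hardy-space square function, a Coifman--Meyer endpoint multiplier theorem, or the weak-type/interpolation machinery of Grafakos--Oh and Muscalu--Schlag). This is a real subtlety, not a routine patch. It does not affect the paper, since the paper never uses $r=1$, but if you want the lemma exactly as stated you need to import one of those deeper results at that endpoint rather than rely on the square-function proof.
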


\begin{lemma}[\cite{CH19,ESY07}]\label{lemma:Sobolev type estimate, 3d}
Let $d=3$, $\eta>d/4$ and $V_{N}(x)=N^{3\be}V(N^{\be}x)$. Then
\begin{align}
&V_{N}(x_{1}-x_{2})\leq C(\eta)\n{V}_{L^{1}}(1-\Delta_{x_{1}})^{\eta}
(1-\Delta_{x_{2}})^{\eta},\label{equ:Sobolev type estimate, 3d, L1}\\
&V_{N}(x_{1}-x_{2})\leq C N^{\be}\n{V}_{L^{3/2}}(1-\Delta_{x_{1}}),\label{equ:Sobolev type estimate, 3d, L2}\\
&V_{N}(x_{1}-x_{2})\leq C N^{3\be}\n{V}_{L^{\wq}}.\label{equ:Sobolev type estimate, 3d, Linffty}
\end{align}
\end{lemma}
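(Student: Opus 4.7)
I interpret the three bounds as operator (quadratic form) inequalities on $L^2(\R^3_{x_1}\times \R^3_{x_2})$ and verify each by testing against an arbitrary $\phi$. The estimate $(\ref{equ:Sobolev type estimate, 3d, Linffty})$ is immediate from the pointwise bound $V_N(x)\leq N^{3\be}\n{V}_{L^{\wq}}$. For $(\ref{equ:Sobolev type estimate, 3d, L2})$, my plan is to freeze $x_2$ and apply H\"{o}lder with exponents $(3/2, 3)$ in $x_1$, giving $\int V_N(x_1-x_2)|\phi|^2\,dx_1 \leq \n{V_N}_{L^{3/2}_{x_1}}\n{\phi(\cdot,x_2)}_{L^6_{x_1}}^2$; the Sobolev embedding $H^1(\R^3)\hookrightarrow L^6(\R^3)$ converts the $L^6$ factor into $\lra{\phi(\cdot,x_2),(1-\Delta_{x_1})\phi(\cdot,x_2)}$, and integrating in $x_2$ together with the scaling identity $\n{V_N}_{L^{3/2}}=N^{\be}\n{V}_{L^{3/2}}$ closes the argument.

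The interesting inequality is $(\ref{equ:Sobolev type estimate, 3d, L1})$. Since $V_N$ depends only on $x_1-x_2$, I would change variables $y=x_1-x_2$, $z=x_2$ and factor out $\n{V}_{L^1}$ to reduce matters to
\begin{align*}
\sup_{y\in\R^3} F(y) \leq C(\eta)\,\n{(1-\Delta_{x_1})^{\eta/2}(1-\Delta_{x_2})^{\eta/2}\phi}_{L^2}^2, \qquad F(y):=\int_{\R^3}|\phi(y+z,z)|^2\,dz.
\end{align*}
Bounding the supremum by $\n{F}_{L^{\wq}}\leq \n{\hat F}_{L^1}$, a direct computation yields
$\hat F(\xi) = \widehat{|\phi|^2}(\xi,-\xi)=\int \hat\phi(\xi+\zeta_1,-\xi+\zeta_2)\overline{\hat\phi(\zeta_1,\zeta_2)}\,d\zeta_1 d\zeta_2$. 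Introducing the weight $w(\zeta):=(1+|\zeta_1|^2)^{\eta/2}(1+|\zeta_2|^2)^{\eta/2}$ and applying Cauchy--Schwarz in $(\xi,\zeta_1,\zeta_2)$ that pairs $|w\hat\phi|(\xi+\zeta_1,-\xi+\zeta_2)/w(\zeta_1,\zeta_2)$ against $|w\hat\phi|(\zeta_1,\zeta_2)/w(\xi+\zeta_1,-\xi+\zeta_2)$, followed by the unit-Jacobian substitution $(\eta_1,\eta_2)=(\xi+\zeta_1,-\xi+\zeta_2)$ and Fubini, reduces matters to the uniform bound
\begin{align*}
\sup_{a,b\in\R^3}\int_{\R^3}\frac{d\xi}{(1+|a-\xi|^2)^{\eta}(1+|b+\xi|^2)^{\eta}} < +\wq.
\end{align*}

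The only real obstacle is verifying this last bound, and it is precisely here that the threshold $\eta>d/4=3/4$ enters. Applying Cauchy--Schwarz once more dominates the integral by $(\int d\xi/(1+|\xi|^2)^{2\eta})^{1/2}$ evaluated at each of the two centers, which is finite iff $4\eta>3$; translation invariance of Lebesgue measure then provides uniformity in $a,b$. Combining the pieces completes $(\ref{equ:Sobolev type estimate, 3d, L1})$ and shows that the hypothesis $\eta>d/4$ is sharp at this stage of the argument.
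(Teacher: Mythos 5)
Your proof is correct. All three bounds are established: $(\ref{equ:Sobolev type estimate, 3d, Linffty})$ is immediate, $(\ref{equ:Sobolev type estimate, 3d, L2})$ follows cleanly from H\"older, Sobolev embedding $H^1(\R^3)\hookrightarrow L^6(\R^3)$, and the scaling $\|V_N\|_{L^{3/2}}=N^{\be}\|V\|_{L^{3/2}}$, and your argument for $(\ref{equ:Sobolev type estimate, 3d, L1})$ goes through.

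For $(\ref{equ:Sobolev type estimate, 3d, L1})$ your route differs from the paper's in a way worth noting. The paper works directly on the quadratic form in Fourier variables, inserts the ratio of weights $w(\xi)w(\xi+p,\xi-p)$, and then applies H\"older, with the reciprocal weight kernel placed in $L^1_p L^\infty_{\xi_1}L^\infty_{\xi_2}$ and claimed to be $\lesssim \int\lra{p}^{-4\eta}\,dp$. You instead first pass to physical space via the change of variables $(y,z)=(x_1-x_2,x_2)$, separate out $\|V_N\|_{L^1}=\|V\|_{L^1}$, bound $\sup_y F(y)\le\|\hat F\|_{L^1}$, and then do Cauchy--Schwarz in all Fourier variables followed by a unit-Jacobian substitution and Fubini, arriving at the uniform convolution bound $\sup_{a,b}\int\lra{a-\xi}^{-2\eta}\lra{b+\xi}^{-2\eta}\,d\xi<\infty$, which holds iff $\lra{\cdot}^{-2\eta}\in L^2(\R^3)$, i.e.\ iff $4\eta>3$. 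This is not merely cosmetic: the paper's intermediate displayed estimate is actually off by a factor of two in the exponent. For fixed $p$ one has $\sup_{\xi_1,\xi_2}\bigl[\lra{\xi_1}^{\eta}\lra{\xi_2}^{\eta}\lra{\xi_1+p}^{\eta}\lra{\xi_2-p}^{\eta}\bigr]^{-1}\approx\lra{p}^{-2\eta}$ (take $\xi_1=\xi_2=0$), so the $L^1_p L^\infty_{\xi}$ norm is $\approx\int\lra{p}^{-2\eta}\,dp$, which is finite only for $\eta>3/2$ --- not the stated $\eta>3/4$. The threshold $\eta>3/4$ in fact requires exactly the Cauchy--Schwarz/$L^2$-convolution argument you carry out (or the equivalent Schur-test formulation), in which $\int\lra{p}^{-4\eta}\,dp$ appears as a square rather than as the $L^1_p$ norm of the kernel's $L^\infty_\xi$ envelope. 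In short: your proof not only works but is a corrected version of the argument the paper sketches; the two end at the same integral $\int\lra{p}^{-4\eta}\,dp$, but only your intermediate steps are consistent with it.
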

\begin{proof}
For $(\ref{equ:Sobolev type estimate, 3d, L1})$ with $\eta=1$, $(\ref{equ:Sobolev type estimate, 3d, L2})$ and $(\ref{equ:Sobolev type estimate, 3d, Linffty})$, see \cite[Lemma A.3]{ESY07}. For $(\ref{equ:Sobolev type estimate, 3d, L1})$ with $\eta>3/4$, see \cite{CH19}. For completeness, we show a proof of $(\ref{equ:Sobolev type estimate, 3d, L1})$.
By direct calculation,
\begin{align*}
&\lra{\psi,V(x_{1}-x_{2})\psi}\\
=&\int dpd\xi_{1}d\xi_{2} \widehat{\psi}(\xi_{1},\xi_{2})\widehat{V}(p)\ol{\widehat{\psi}}(\xi_{1}+p,\xi_{2}-p)\\
=&\int dpd\xi_{1}d\xi_{2} \frac{\lra{\xi_{1}}^{\eta}\lra{\xi_{2}}^{\eta}\lra{\xi_{1}+p}^{\eta}\lra{\xi_{2}-p}^{\eta}}
{\lra{\xi_{1}}^{\eta}\lra{\xi_{2}}^{\eta}\lra{\xi_{1}+p}^{\eta}\lra{\xi_{2}-p}^{\eta}}
\widehat{\psi}(\xi_{1},\xi_{2})\widehat{V}(p)\ol{\widehat{\psi}}(\xi_{1}+p,\xi_{2}-p)
\end{align*}
By H\"{o}lder,
\begin{align*}
\leq& \n{\widehat{V}}_{L^{\wq}}\n{\lra{\xi_{1}}^{\eta}\lra{\xi_{2}}^{\eta}\widehat{\psi}(\xi_{1},\xi_{2})}_{L_{p}^{\infty}
L_{\xi_{1},\xi_{2}}^{2}}
\n{\lra{\xi_{1}+p}^{\eta}\lra{\xi_{2}-p}^{\eta}\ol{\widehat{\psi}}(\xi_{1}+p,\xi_{2}-p)}_{
L_{p}^{\infty}L_{\xi_{1},\xi_{2}}^{2}}\\
&\bbn{\frac{1}
{\lra{\xi_{1}}^{\eta}\lra{\xi_{2}}^{\eta}\lra{\xi_{1}+p}^{\eta}\lra{\xi_{2}-p}^{\eta}}}_{L_{p}^{1}L_{\xi_{1}}^{\infty}
L_{\xi_{2}}^{\infty}}\\
\leq &C(\eta)\n{V}_{L^{1}}\n{\lra{\nabla_{x_{1}}}^{\eta}\lra{\nabla_{x_{2}}}^{\eta}\psi}_{L^{2}}^{2}
\end{align*}
where in the last inequality we used
\begin{align*}
\bbn{\frac{1}
{\lra{\xi_{1}}^{\eta}\lra{\xi_{2}}^{\eta}\lra{\xi_{1}+p}^{\eta}\lra{\xi_{2}-p}^{\eta}}}_{L_{p}^{1}L_{\xi_{1}}^{\infty}
L_{\xi_{2}}^{\infty}}
\lesssim &\int \frac{1}{\lra{p}^{4\eta}} dp\leq C(\eta).
\end{align*}
\end{proof}

\section{Energy Estimate}
Recall the Hamiltonian $(\ref{equ:hamiltonian})$
\begin{align*}
H_{N,\hbar}=\sum_{j=1}^{N}-\frac{1}{2}\hbar^{2}\Delta_{x_{j}}+\frac{1}{N}\sum_{1\leq j<k\leq N}V_{N}(x_{j}-x_{k})
\end{align*}
and the derivative involving $\hbar$ in $(\ref{equ:kinetic operator,h})$
\begin{align*}
S_{\hbar,j}^{2}=1-\frac{\hbar^{2}}{2}\Delta_{x_{j}}.
\end{align*}

\begin{proposition}\label{lemma:energy estimate}
Let $\be<\frac{3}{5}$, $k\leq (\ln N)^{100}$ and $\hbar^{-1}\leq \ln N$\footnote{The restriction that $\hbar^{-1}\leq \ln N$ is not necessary and it can be removed at the price of reducing down the parameter $\be$.}.
There exists $N_{0}(\be)$ independent of $k$ and $\hbar$, such that
\begin{align}
\lra{\psi,(H_{N,\hbar}+N)^{k}\psi}\geq \frac{N^{k}}{2^{k}}\lra{\psi,S_{\hbar,1}^{2}S_{\hbar,2}^{2}\ccc S_{\hbar,k}^{2}\psi}.
\end{align}
for every $N\geq N_{0}(\be)$.
\end{proposition}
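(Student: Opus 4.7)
The plan is to adapt the Erd\H{o}s--Schlein--Yau style energy estimate (as used in \cite{ESY07,ESY09,ESY10,CH21quantitative}) to the $\hbar$-dependent setting. Using the decomposition
\begin{align*}
H_{N,\hbar}+N=\sum_{j=1}^{N}S_{\hbar,j}^{2}+\frac{1}{N}\sum_{1\leq i<j\leq N}V_{N}(x_{i}-x_{j}),
\end{align*}
and $V_N\geq 0$, the operator $H_{N,\hbar}+N$ dominates $S_{\hbar,j}^2$ on average. The base cases $k=0$ and $k=1$ follow directly from positivity of $V$ and symmetry of $\psi$: by permutation symmetry, $\langle\psi,\sum_j S_{\hbar,j}^2\psi\rangle=N\langle\psi,S_{\hbar,1}^2\psi\rangle$, which gives $\langle\psi,(H_{N,\hbar}+N)\psi\rangle\geq N\langle\psi,S_{\hbar,1}^2\psi\rangle$.

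For the inductive step, I would proceed by stepping $k\to k+2$ (the two-step induction avoids asymmetry issues). The goal is the operator-inequality
\begin{align*}
(H_{N,\hbar}+N)^{k+2}\geq \tfrac{N^{2}}{4}\,S_{\hbar,k+1}^{2}S_{\hbar,k+2}^{2}(H_{N,\hbar}+N)^{k}\quad\text{on symmetric wavefunctions},
\end{align*}
from which the conclusion follows by applying the induction hypothesis to the right-most factor and using symmetry of $\psi$ to freely relabel indices. To establish this, I would write the left-hand side as $(H_{N,\hbar}+N)M(H_{N,\hbar}+N)^{k}$ where the outer $(H_{N,\hbar}+N)$ factors are expanded into $\sum_j S_{\hbar,j}^2+\frac{1}{N}\sum V_N$. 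The leading positive term $N^2\cdot\frac{1}{N^2}\sum_{j_1\neq j_2}S_{\hbar,j_1}^2 S_{\hbar,j_2}^2(H_{N,\hbar}+N)^k$ collapses by symmetry to $\tfrac{N(N-1)}{N^2}\cdot N^2\,S_{\hbar,k+1}^2S_{\hbar,k+2}^2(H_{N,\hbar}+N)^k$, yielding the desired main term (with room to spare, allowing the factor $1/2$). The remaining terms are errors: $(i)$ commutators $[S_{\hbar,j}^2,V_N(x_a-x_b)]$ arising when one rearranges $S_{\hbar,j}^2$'s past interaction potentials to align with $S_{\hbar,k+1}^2 S_{\hbar,k+2}^2$, and $(ii)$ cross terms involving an interaction potential from one $H_{N,\hbar}$ factor paired with kinetic/potential terms from the other.

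The main obstacle is bounding these error terms uniformly in $k$ and $\hbar$. Each commutator $[S_{\hbar,j}^{2},V_{N}(x_a-x_b)]$ produces $\hbar^{2}\nabla V_{N}\cdot\nabla$ type terms which, after applying Lemma \ref{lemma:Sobolev type estimate, 3d} (in particular $(\ref{equ:Sobolev type estimate, 3d, L2})$: $V_N\lesssim N^\beta(1-\Delta)$) and the standard inequality $\hbar^{2}|\nabla V_{N}|\lesssim N^{\beta}\hbar^{2}\lra{\nabla}^{2}\cdot\lra{\nabla}$, contribute at most $CN^{-1+\beta}\hbar^{-2}\cdot S_{\hbar,j_{1}}^{2}S_{\hbar,j_{2}}^{2}$ after absorbing the derivatives into neighboring $S_\hbar^2$'s; the factor $N^{-1}$ comes from the mean-field scaling of $H_{N,\hbar}$. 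With $\beta<3/5$ and $\hbar^{-1}\leq \ln N$, one obtains a per-step error $\leq CN^{-2/5}(\ln N)^{O(1)}$, which, accumulated over $k\leq (\ln N)^{100}$ induction steps, remains $o(1)$ for $N\geq N_{0}(\beta)$ and is absorbed into the constant $1/2$. Tracking the precise dependence on $k$ (a combinatorial count of error terms of the order $k^{2}$ at each step) and verifying that the factor $1/2$ in the stated bound leaves enough slack to absorb all errors uniformly in $k\leq (\ln N)^{100}$ is the delicate part; this is where the restriction $\hbar^{-1}\leq \ln N$ is invoked to convert $\hbar^{-2}$ losses into $(\ln N)^{O(1)}$ losses that are dominated by $N^{(3/5-\beta)}$.
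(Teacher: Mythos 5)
Your overall strategy is correct and is the same ESY-style two-step induction the paper uses (expand the outer Hamiltonians, estimate commutator errors via Cauchy--Schwarz and the Sobolev-type bounds of Lemma \ref{lemma:Sobolev type estimate, 3d}, then use $\beta<3/5$, $k\leq(\ln N)^{100}$, $\hbar^{-1}\leq\ln N$ to absorb losses). However, the specific logical ordering you propose does not close. You state the goal as an operator inequality $(H_{N,\hbar}+N)^{k+2}\geq \frac{N^{2}}{4}S_{\hbar,k+1}^{2}S_{\hbar,k+2}^{2}(H_{N,\hbar}+N)^{k}$, to be followed by applying the induction hypothesis to the rightmost $(H_{N,\hbar}+N)^{k}$. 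Two problems: first, $S_{\hbar,k+1}^{2}S_{\hbar,k+2}^{2}(H_{N,\hbar}+N)^{k}$ is not self-adjoint (the $S^2$'s do not commute with the interaction potentials in $H_{N,\hbar}$), so the purported operator inequality is not well-posed as written. Second, and more importantly, even in quadratic-form terms the IH cannot be applied ``to the right-most factor'': the IH holds for fully $N$-permutation-symmetric wavefunctions, but after sandwiching with $S_{\hbar,k+1}S_{\hbar,k+2}$ the state no longer has this symmetry (variables $k+1,k+2$ are distinguished from the rest), so the $N$-fold counting used in the base case and induction breaks down.

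The paper avoids both issues by applying the IH \emph{first}, before any expansion. Since $(H_{N,\hbar}+N)\psi$ is still fully symmetric, one writes
\begin{align*}
\lra{\psi,(H_{N,\hbar}+N)^{n+2}\psi}
=\lra{(H_{N,\hbar}+N)\psi,(H_{N,\hbar}+N)^{n}(H_{N,\hbar}+N)\psi}
\geq\frac{N^{n}}{2^{n}}\lra{\psi,(H_{N,\hbar}+N)S_{\hbar,1}^{2}\cdots S_{\hbar,n}^{2}(H_{N,\hbar}+N)\psi},
\end{align*}
and only then expands the two outer $(H_{N,\hbar}+N)$ factors around $S_{\hbar,1}^{2}\cdots S_{\hbar,n}^{2}$. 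This makes the remaining estimate a one-time gain of $\tfrac{N^2}{4}$ per induction step (absorbed multiplicatively into $2^{-k}$), rather than something that has to be assembled across all $k$ steps.

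A further quantitative point: your claimed per-step error $CN^{-1+\beta}\hbar^{-2}$ does not reproduce the threshold $\beta<3/5$. After the Cauchy--Schwarz splittings are optimized (three separate applications for the $V_{12}$ interaction, one for the $V_{1,n+2}$ term), the dominant error in the paper is of size $N^{\frac{5}{2}\beta-\frac{3}{2}}\hbar^{-1}n^{2}$, and $5\beta/2<3/2$ is precisely where $\beta<3/5$ comes from; your $N^{\beta-1}$ would suggest $\beta<1$, which is too generous. You should keep track of the two derivatives that land on $V_N$ (giving $N^{4\beta}$ in $L^1$) against the one-derivative Sobolev bound $N^{\beta}$, and take the geometric mean in Cauchy--Schwarz to get $N^{5\beta/2}$.
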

\begin{proof}
This proof has been done by many authors in many work. We include one here solely for completeness purposes.
For $k=0$ and $k=1$, the claim is trivial because of the positivity of the potential.
Now we assume the proposition is true for all $k\leq n$, and we prove it for $k=n+2$.
\begin{align}\label{equ:energy estimate, induction n}
\lra{\psi,(H_{N,\hbar}+N)^{n+2}\psi}
=&\lra{\lrs{H_{N,\hbar}+N}\psi,\lrs{H_{N,\hbar}+N}^{n}\lrs{H_{N,\hbar}+N}\psi}\\
\geq&\frac{N^{n}}{2^{n}}\lra{\psi,
(H_{N,\hbar}+N)S_{\hbar,1}^{2}\ccc S_{\hbar,n}^{2}(H_{N,\hbar}+N)\psi}.\notag
\end{align}
We set
\begin{align*}
H_{N,\hbar}^{(n)}=\sum_{j=1}^{n}S_{\hbar,j}^{2}+\frac{1}{n}\sum_{j<m}^{n}V_{jm}
\end{align*}
with $V_{jm}=N^{3\be}V(N^{\be}(x_{j}-x_{k}))$. Then we have
\begin{align*}
&\lra{\psi,
(H_{N,\hbar}+N)S_{\hbar,1}^{2}\ccc S_{\hbar,n}^{2}(H_{N,\hbar}+N)\psi}\\
=&\sum_{j_{1},j_{2}\geq n+1}\lra{\psi,S_{\hbar,j_{1}}^{2}S_{\hbar,1}^{2}\ccc
S_{\hbar,n}^{2}S_{\hbar,j_{2}}^{2}\psi}\\
&+\sum_{j\geq n+1}\lrs{\lra{\psi,S_{\hbar,j}^{2}S_{\hbar,1}^{2}\ccc S_{\hbar,n}^{2}
H_{N,\hbar}^{(n)}\psi}+c.c.}+\lra{\psi,H_{N,\hbar}^{(n)}S_{\hbar,1}^{2}\ccc S_{\hbar,n}^{2}H_{N,\hbar}^{(n)}\psi}.
\end{align*}
where $c.c.$ denotes the complex conjugate.
Since $H_{N,\hbar}^{(n)}S_{\hbar,1}^{2}\ccc S_{\hbar,n}^{2}H_{N,\hbar}^{(n)}\geq 0$, we have, using the symmetry
with respect to permutations,
\begin{align}\label{equ:expansion, induction n+2}
&\lra{\psi,
(H_{N,\hbar}+N)S_{\hbar,1}^{2}\ccc S_{\hbar,n}^{2}(H_{N,\hbar}+N)\psi}\\
\geq& (N-n)(N-n-1)\lra{\psi,S_{\hbar,1}^{2}S_{\hbar,2}^{2}\ccc S_{\hbar,n+2}^{2}\psi}\notag\\
&+(2n+1)(N-n)\lra{\psi,S_{\hbar,1}^{4}S_{\hbar,2}^{2}\ccc S_{\hbar,n+1}^{2}\psi}\notag\\
&+\frac{n(n+1)(N-n)}{2N}\lrs{
\lra{\psi,V_{12}S_{\hbar,1}^{2}S_{\hbar,2}^{2}\ccc S_{\hbar,n+1}^{2}\psi}+c.c.}\notag\\
&+\frac{(n+1)(N-n)(N-n-1)}{N}
\lrs{\lra{\psi,V_{1,n+2}S_{\hbar,1}^{2}S_{\hbar,2}^{2}\ccc S_{\hbar,n+1}^{2}\psi}+c.c.}\notag
\end{align}
Here we also used the fact that
\begin{align*}
\lra{\psi,V_{jm}S_{\hbar,1}^{2},\ccc S_{\hbar,n+1}^{2}\psi}
\geq 0
\end{align*}
if $j$, $m>n+1$, because of the positivity of the potential. Next, we will bound the last two terms on the r.h.s of $(\ref{equ:expansion, induction n+2})$ from below, so we might as well set $S_{\hbar,j}^{2}=1-\hbar^{2}\Delta_{x_{j}}$ for simplicity. Then we have
\begin{align*}
&\lra{\psi,V_{12}S_{\hbar,1}^{2}S_{\hbar,2}^{2}\ccc S_{\hbar,n+1}^{2}\psi}+c.c.\\
=&\lra{\psi,V_{12}(1-\hbar^{2}\Delta_{x_{1}})(1-\hbar^{2}\Delta_{x_{2}})S_{\hbar,3}^{2}\ccc S_{\hbar,n+1}^{2}\psi}+c.c.\\
\geq &\lra{\psi,\hbar\nabla V_{12}\hbar\nabla_{x_{2}}S_{\hbar,3}^{2}\ccc S_{\hbar,n+1}^{2}\psi}+c.c.\\
&+\lra{\hbar\nabla_{x_{2}}\psi,\hbar\nabla V_{12}\hbar\nabla_{x_{1}}\hbar\nabla_{x_{2}}S_{\hbar,3}^{2}\ccc S_{\hbar,n+1}^{2}\psi}+c.c.\\
&+\lra{\psi,\hbar\nabla V_{12}\hbar^{2}\Delta_{x_{1}}\hbar\nabla_{x_{2}}S_{\hbar,3}^{2}\ccc S_{\hbar,n+1}^{2}\psi}+c.c.\\
:=&I+II+III
\end{align*}
where $\nabla V_{12}=N^{4\be}\lrs{\nabla V}(N^{\be}(x_{1}-x_{2}))$.
Applying Cauchy-Schwarz, we get
\begin{align*}
I\geq& -2\left\{\al_{1}\lra{\psi,|\hbar\nabla V_{12}|S_{\hbar,3}^{2}\ccc S_{\hbar,n+1}^{2}\psi}\right.\\
&\left.+\al_{1}^{-1}
\lra{|\hbar\nabla_{x_{2}}|\psi,|\hbar\nabla V_{12}|S_{\hbar,3}^{2}\ccc S_{\hbar,n+1}^{2}|\hbar\nabla_{x_{2}}|\psi}\right\},
\end{align*}
\begin{align*}
II\geq& -2\left\{\al_{2}\lra{|\hbar\nabla_{x_{2}}|\psi,|\hbar\nabla V_{12}|
S_{\hbar,3}^{2}\ccc S_{\hbar,n+1}^{2}|\hbar\nabla_{x_{2}}|\psi} \right.\\
 &\left. +\al_{2}^{-1}\lra{|\hbar\nabla_{x_{1}}||\hbar\nabla_{x_{2}}|\psi,|\hbar\nabla V_{12}|
S_{\hbar,3}^{2}\ccc S_{\hbar,n+1}^{2}|\hbar\nabla_{x_{1}}||\hbar\nabla_{x_{2}}|\psi}\right\},
\end{align*}
\begin{align*}
III\geq &-2\left\{\al_{3}\lra{\psi,|\hbar\nabla V_{12}|S_{\hbar,3}^{2}\ccc S_{\hbar,n+1}^{2}\psi}\right.\\
&\left.+\al_{3}^{-1}
\lra{|\hbar\nabla_{x_{1}}|^{2}|\hbar\nabla_{x_{2}}|\psi,|\hbar\nabla V_{12}|S_{\hbar,3}^{2}\ccc S_{\hbar,n+1}^{2}|\hbar\nabla_{x_{1}}|^{2}|\hbar\nabla_{x_{2}}|\psi}\right\}.
\end{align*}

By Lemma \ref{lemma:Sobolev type estimate, 3d},
\begin{align*}
&I\geq -C \lr{\al_{1}N^{\be}\hbar^{-3}\lra{\psi,S_{\hbar,1}^{2}\ccc S_{\hbar,n+1}^{2}\psi}
+\al_{1}^{-1}N^{4\be}\hbar\lra{\psi,S_{\hbar,2}^{2}\ccc S_{\hbar,n+1}^{2}\psi}},\\
&II\geq -C\lr{\al_{2}N^{2\be}\hbar^{-1}\lra{\psi,S_{\hbar,1}^{2}\ccc S_{\hbar,n+1}^{2}\psi}
+\al_{2}^{-1}N^{2\be}\hbar^{-1}\lra{\psi,S_{\hbar,1}^{4}S_{\hbar,2}^{2}\ccc S_{\hbar,n+1}^{2}\psi}},\\
&III\geq -C \lr{\al_{3}N^{\be}\hbar^{-3}\lra{\psi,S_{\hbar,1}^{2}\ccc S_{\hbar,n+1}^{2}\psi}
+\al_{3}^{-1}N^{4\be}\hbar\lra{\psi,S_{\hbar,1}^{4}S_{\hbar,2}^{2}\ccc S_{\hbar,n+1}^{2}\psi}}.
\end{align*}
Optimizing the choice of $\al_{1}$, $\al_{2}$ and $\al_{3}$, we find that
\begin{align*}
&\lra{\psi,V_{12}S_{\hbar,1}^{2}S_{\hbar,2}^{2}\ccc S_{\hbar,n+1}^{2}\psi}+c.c.\\
\geq &-CN^{-3/2}N^{\frac{5}{2}\be}\hbar^{-1}\lr{N^{2}\lra{\psi,S_{\hbar,1}^{2}
\ccc S_{\hbar,n+1}^{2}\psi}+N\lra{\psi,S_{\hbar,1}^{4}S_{\hbar,2}^{2}\ccc S_{\hbar,n+1}^{2}\psi}}.
\end{align*}

As for the last term on the r.h.s of $(\ref{equ:expansion, induction n+2})$, we have
\begin{align*}
&\lra{\psi,V_{1,n+2}S_{\hbar,1}^{2}S_{\hbar,2}^{2}\ccc S_{\hbar,n+1}^{2}\psi}+c.c.\\
\geq & \lra{\psi,V_{1,n+2}(-\hbar^{2}\Delta_{x_{1}})S_{\hbar,2}^{2}\ccc S_{\hbar,n+1}^{2}\psi}+c.c.\\
\geq &\lra{\psi,|\hbar\nabla V_{1,n+2}||\hbar\nabla_{x_{1}}|S_{\hbar,2}^{2}\ccc S_{\hbar,n+1}^{2}\psi}+c.c.\\
\geq &-\al \lra{\psi,|\hbar\nabla V_{1,n+1}|
S_{\hbar,2}^{2}\ccc S_{\hbar,n+1}^{2}\psi}
-\al^{-1}\lra{|\hbar\nabla_{x_{1}}|\psi,|\hbar\nabla V_{1,n+2}|
S_{\hbar,2}^{2}\ccc S_{\hbar,n+1}^{2}|\hbar\nabla_{x_{1}}|\psi}\\
\geq &-C\lrs{\al N^{\be}\hbar^{-3}+\al^{-1}N^{2\be}\hbar^{-1}}\lra{\psi,S_{\hbar,1}^{2}\ccc S_{\hbar,n+2}^{2}\psi}\\
\geq&-CN^{\frac{3}{2}\be}\hbar^{-2}
\end{align*}
where we optimized the choice of $\al$. Then we get
\begin{align*}
&\lra{\psi,
(H_{N,\hbar}+N)S_{\hbar,1}^{2}\ccc S_{\hbar,n}^{2}(H_{N,\hbar}+N)\psi}\\
\geq&(N-n)(N-n-1)\lrs{1-\frac{CN^{\frac{5}{2}\be}\hbar^{-1}n^{2}}{N^{1/2}(N-n)}-
\frac{CN^{\frac{3}{2}\be}\hbar^{-2}n}{N}}\lra{\psi,S_{\hbar,1}^{2}\ccc S_{\hbar,n+2}^{2}\psi}\\
&+(2n+1)(N-n)\lrs{1-\frac{CN^{\frac{5}{2}\be}\hbar^{-1}n}{N^{3/2}}}\lra{\psi,S_{\hbar,1}^{4}S_{\hbar,2}^{2}\ccc S_{\hbar,n+1}^{2}\psi}
\end{align*}
Since $\be<\frac{3}{5}$, $n\leq (\ln N)^{100}$ and $\hbar^{-1}\leq \ln N$, we can find $N_{0}(\be)$ which is independent of $n$ and $\hbar$, so that
\begin{align*}
&\lra{\psi,
(H_{N,\hbar}+N)S_{\hbar,1}^{2}\ccc S_{\hbar,n}^{2}(H_{N,\hbar}+N)\psi}
\geq\frac{N^{2}}{4}\lra{\psi,S_{\hbar,1}^{2}\ccc S_{\hbar,n+2}^{2}\psi}
\end{align*}
for every $N\geq N_{0}(\be)$. Together with $(\ref{equ:energy estimate, induction n})$, this completes the proof.
\end{proof}

\bibliographystyle{abbrv}
\bibliography{references}

\end{document}